\newtheorem{theorem}{Theorem}[section]
\newtheorem{lemma}{Lemma}[section]
\newtheorem{corollary}[theorem]{Corollary}
\theoremstyle{definition}
\newtheorem{remark}{{\it Remark}}[section]
\numberwithin{equation}{section}
\newcommand{\Bb}{{\boldsymbol{b}}}
\newcommand{\Bc}{{\boldsymbol{c}}}
\newcommand{\Bd}{{\boldsymbol{d}}}
\newcommand{\Bf}{{\boldsymbol{f}}}
\newcommand{\Bg}{{\boldsymbol{g}}}
\newcommand{\Bn}{{\boldsymbol{n}}}
\newcommand{\Bp}{{\boldsymbol{p}}}
\newcommand{\Bu}{{\boldsymbol{u}}}
\newcommand{\Bv}{{\boldsymbol{v}}}
\newcommand{\Bw}{{\boldsymbol{w}}}
\newcommand{\Bx}{{\boldsymbol{x}}}
\newcommand{\Bz}{{\boldsymbol{z}}}
\newcommand{\Btheta}{{\boldsymbol{\theta}}}
\newcommand{\BC}{{\boldsymbol{C}}}
\newcommand{\BH}{{\boldsymbol{H}}}
\newcommand{\BV}{{\boldsymbol{V}}}
\newcommand{\BX}{{\boldsymbol{X}}}
\newcommand{\Ct}{{\mathcal T}}
\def \endproof{\vrule height8pt width 5pt depth 0pt}
\newcommand{\nn}{\nonumber}
\def\refe#1{(\ref{#1})}
\begin{document}

\title[Mixed method for incompressible MHD]{New analysis of Mixed finite element methods for 
incompressible Magnetohydrodynamics}

\author{Yuchen Huang}
\address{
Advanced Institute of Natural Sciences, Beijing Normal University, 
Zhuhai 519087, P.R. China.  
}

\author{Weifeng Qiu} 
\address{Department of Mathematics, City University of Hong Kong, 83 Tat Chee Avenue, Kowloon, Hong Kong, China 
({\it weifeqiu@cityu.edu.hk}). }
\thanks{The work of W. Qiu was supported by a grant from the Research Grants 
Council of the Hong Kong Special Administrative Region, China 
(Project No. CityU 11302718). 
}

\author{Weiwei Sun}
\address{Research Center for Mathematics, Advanced Institute of Natural Science, Beijing Normal University,  Zhuhai, P.R. 
China} 
 \address{Guangdong Provincial Key Laboratory of Interdisciplinary Research and Application for Data Science (IRADS), BNU-HKBU United International College, Zhuhai, 519087, P.R.China (W.Sun, {\it maweiw@uic.edu.cn}).  
 } 
\thanks{
The work of Y. Huang and W. Sun was  partially supported 
by National 
Natural Science Foundation of China (12231003 and 12071040),  
Guangdong Provincial Key Laboratory IRADS (2022B1212010006, UIC-R0400001-22) and Guangdong Higher Education Upgrading Plan (UIC-R0400024-21). 
}


\begin{abstract}
This paper focuses on new error analysis of a class of mixed FEMs for stationary incompressible magnetohydrodynamics with 
the standard inf-sup stable velocity-pressure space in cooperation with Navier-Stokes equations and the N\'ed\'elec's 
edge element for the magnetic field. The methods have been widely used in various numerical simulations in the last 
several decades, while the existing analysis is not optimal due to the strong coupling of system and the pollution of 
the lower-order N\'ed\'elec's edge approximation in analysis. In terms of a newly  modified Maxwell projection we 
establish new and optimal error estimates. In particular, we prove that the method based on the commonly-used 
Taylor-Hood/lowest-order N\'ed\'elec's edge element is efficient and the method provides the second-order accuracy 
for numerical velocity.  Two numerical examples for the problem in both convex and nonconvex polygonal domains 
are presented,  which confirm our theoretical analysis. 
\end{abstract}

\subjclass[2000]{65N30, 65L12}

\keywords{Mixed method, Magnetohydrodynamics}

\maketitle

\section{Introduction}
\label{sec_intro}
Magnetohydrodynamics (MHD) is the study of the interaction between electrically conducting fluids and 
electromagnetic fields  \cite{Davidson01, GerbeauBris2006, Muller01}, such as  liquid metals, and salt water or electrolytes. 
Some more comprehensive discussion on the applications can be found in 
\cite{GerbeauBris2006, HughesYoung99, Moreau} and references therein.  In this paper, we consider the steady state incompressible MHD model on $\Omega \subset \mathbb{R}^d$, $d=2,3$, defined by 
\begin{subequations}
\label{mhd_eqs}
\begin{align}
\label{mhd_eq1}
- R_{e}^{-1}\Delta\Bu + (\Bu\cdot \nabla) \Bu + \nabla p - S (\nabla\times \Bb) \times \Bb & = \Bf \quad \text{ in } \Omega, \\
\label{mhd_eq2} 
S R_{m}^{-1} \nabla\times (\nabla\times \Bb) - S \nabla \times (\Bu \times \Bb) - \nabla r & = \Bg \quad \text{ in } \Omega, \\
\label{mhd_eq3}
\nabla \cdot \Bu & = 0 \quad \text{ in } \Omega, \\
\label{mhd_eq4}
\nabla\cdot \Bb & = 0 \quad \text{ in } \Omega, \\
\label{mhd_eq5}
\Bu & = \boldsymbol{0} \quad \text{ on } \partial\Omega, \\
\label{mhd_eq6}
\Bn \times \Bb & = \boldsymbol{0} \quad \text{ on } \partial\Omega, \\ 
\label{mhd_eq7}
r & = 0 \quad \text{ on } \partial\Omega, \\
\label{mhd_eq8}
 \int_{\Omega} p \, d\Bx & = 0, 
\end{align}
\end{subequations}
where $\Omega$ a simply-connected Lipschitz polygonal or polyhedral domain 
and $\Bn$ is the unit outward normal vector on $\partial\Omega$. The solution of the above system consists of the velocity $\Bu$, the pressure $p$, 
the magnetic field $\Bb$ and the Lagrange multiplier $r$ associated with the divergence constraint on the magnetic field $\Bb$. 
The above equations are characterized by three dimensionless parameters: 
the hydrodynamic Reynolds number $R_{e}$, the magnetic Reynolds number $R_{m}$ and the coupling number $S$. 
\cite{ArmeroSimo96, Davidson01, GerbeauBris2006} provide detailed discussion of these parameters and their typical values. 

Numerical methods and analysis for the MHD model have been investigated 
extensively in the last several decades, see 
\cite{Badia2013,Gerbeau2000,GerbeauBris2006,GreifLi2010,Gunzburger91,HZ,Hiptmair_LMZ, HuMaXu, 
HuXu2019, MeirSchmidt99, Shoetzau2004,ZhangHeYang} 
and references therein. The model is described by a coupled system of 
electrical fluid flows and electromagnetic fields, governed by 
Navier-Stokes and Maxwell type equations, respectively. Therefore, 
numerical methods for the MHD system are based on a combination of 
the approximation to Navier-Stokes equations and the approximation to Maxwell equations. 
Earlier works was mainly focused on the classical Lagrange type finite 
element approximation to the magnetic field $\Bb$. 
Analysis has been done by many authors \cite{DongHeZhang, Gerbeau2000, Gunzburger91, HZ, MeirSchmidt99}. 
\cite{Gunzburger91} firstly provides the existence, uniqueness, and optimal convergent finite element approximation 
to the MHD system with nonhomogeneous boundary conditions. 
Instead of assuming the source terms $\boldsymbol{f}$ and $\boldsymbol{g}$ are small enough, 
the analysis in \cite{Gunzburger91} only requires that $\Vert \Bu\Vert_{H^{\frac{1}{2}}(\partial \Omega)}$ is small enough
(see \cite[($4.19$)]{Gunzburger91}). 
A more popular approximation to Maxwell equations is 
the $H(\text{curl})$-conforming N\'ed\'elec's edge element methods, which 
have been widely used in many engineering areas. 
It is well-known that Lagrange type approximation may produce wrong numerical solutions for Maxwell equations in a nonconvex polyhedral domain, 
(see \cite{Amrouche-BDG, Costabel_Dauge}). 
For the MHD system, a class of mixed finite element methods  
was first presented by Sch{\"{o}}tzau \cite{Shoetzau2004}, where the hydrodynamic system 
is discretized by standard inf-sup stable velocity-pressure space pairs and the magnetic system by a mixed 
approach using N\'ed\'elec's elements of the first kind.  
Error estimates of methods were presented and the problem was considered in general domains.  
Subsequently, numerous efforts have been made with the N\'ed\'elec FE approximation 
\cite{Jin2016,LiZheng2017, PagliantiniThesis, Phillips2016,
Wathen2020, Wathen2017,ZhangHeYang} 
and the analysis has been extended to many different models and approximations 
\cite{DingLongMao2020, DingMao2020, GaoSun2015, Hiptmair2018, LSBS, Phillips2015}. 
For a convex polyhedral domain, the main result given in \cite{Shoetzau2004} is the following error estimate 
\begin{align} 
\| \Bu - \Bu_h \|_{H^1(\Omega)} + \| \Bb - \Bb_h \|_{H(\text{curl}, \Omega)} \le 
C (h^l + h^k) 
\label{error-lk}  
\end{align} 
for the method with the approximation accuracy $O(h^l)$ 
for hydrodynamic  variables and the approximation accuracy $O(h^k)$ for 
the magnetic field $\Bb$. 
By \refe{error-lk},  one has to take the combination with $k=l$ to achieve an optimal convergence rate. However, 
the method with $k<l$ is more popular since 
high-order N\'ed\'elec's edge elements are more complicated in implementation 
and extremely time-consuming in computation. 
In particular, the method based on the combination of  the 
Taylor-Hood element and the 
lowest-order N\'ed\'elec's edge element has been frequently used in applications
 and numerical simulations have been done 
extensively \cite{GaoQiu2019, SchneebeliSchoetzau2003,Wathen2020,Wathen2017}. 
 In this case,  $k=1$ and $l=2$, the error estimate \refe{error-lk} reduces to 
\begin{align} 
\| \Bu - \Bu_h \|_{H^{1}(\Omega)} + \| \Bb - \Bb_h \|_{H(\text{curl}, \Omega)} \le 
C (h^2 + h) \, . 
\label{error-1} 
\end{align} 
One can see from \refe{error-1} that the accuracy of numerical velocity is only of the first-order, which is not optimal in the traditional sense and also,  not a good indication 
for the commonly-used method. 
It was assumed that the accuracy of the velocity is polluted by 
the lower-order N\'ed\'elec's edge finite element approximation.  
This is a common question in many applications when FEMs with combined  approximations of different orders is used for 
a strongly coupled system. 
The main purpose of this paper is to establish the optimal error estimate 
 \begin{align} 
\| \Bu - \Bu_h \|_{H^{1}(\Omega)} 
\le 
C (h^l + h^{k+1}) 
\end{align} 
for the standard combination, which shows that the numerical velocity 
is of one-order higher accuracy than given in previous analysis for the case $k < l$ and which implies 
the second-order accuracy 
\begin{align} 
\| \Bu - \Bu_h \|_{H^{1}(\Omega)} 
\le C h^2  
\end{align} 
for the combination of Taylor-Hood element and the 
lowest-order N\'ed\'elec's edge element of the first type. 
Our analysis is based on a new modified Maxwell projection. 
In terms of the projection and the error estimate in a negative norm, 
a more precise analysis is presented in this paper. 
The analysis shows clearly that the  
mixed method with the Taylor-Hood/lowest-order N\'ed\'elec's edge element approximations is efficient and the method provides second-order 
 accuracy for numerical velocity. The lower-order approximation to 
 the magnetic field $\Bb$ does not influence the accuracy of numerical solution of Navier-Stokes equations.


The rest of the paper is organized as follows. In Section 2 we first provide 
the variational formulation and the mixed method for the MHD model and 
some existing results and then, we present our main theorem for an optimal error estimate of the method. 
To prove it, we introduce a modified Maxwell 
projection and establish its approximation properties in Section 3. 
In terms of this projection, we present our theoretical analysis. 
In section 4, we provide numerical experiments to confirm our theoretical analysis and show the efficiency of the method.

\section{Mixed FEMs and main results} 

\subsection{Mixed FEMs} 
\label{sec_methods}
To introduce the mixed method, we adopt the notations and 
norms used in \cite{Shoetzau2004,ZhangHeYang}. We denote some standard 
vector and scalar function spaces by 
\begin{align*}
H(\text{curl}, \Omega) = & \{ \Bc \in [ L^{2}(\Omega) ]^d: \nabla \times \Bc \in [ L^{2}(\Omega) ]^d \}, \\
H_{0}(\text{curl}, \Omega) = &  \{ \Bc \in H(\text{curl}, \Omega): \Bn \times \Bc |_{\partial\Omega}  
= \boldsymbol{0} \}, \\
\BX = & \{ \Bc \in H_{0}(\text{curl}, \Omega):  \nabla\cdot \Bc = 0\}, \\
H(\text{div}, \Omega) = & \{ \Bc \in [ L^{2}(\Omega) ]^d: \nabla \cdot \Bc \in [ L^{2}(\Omega) ]^d  \}, \\
L_{0}^{2}(\Omega) = & \{ q \in L^{2}(\Omega) : \int_{\Omega} q d\Bx = 0 \}, \\
H^{-1}(\Omega)  = & \left( H_{0}^{1}(\Omega) \right)^{*}. 
\end{align*}
For any $(\Bv, \Bc)\in [H_{0}^{1}(\Omega)]^d \times H(\text{curl}, \Omega)$, we define 
\begin{align}
\label{combined_norm1}
\Vert (\Bv, \Bc) \Vert^{2} := \Vert  \nabla \Bv \Vert_{L^{2}(\Omega)}^{2} + S\Vert  \Bc \Vert_{H(\text{curl}, \Omega)}^{2}. 
\end{align}
Moreover, we denote some bilinear or trilinear forms by 
\begin{align*}
& a_{s}(\Bu, \Bv) = R_{e}^{-1} (\nabla \Bu, \nabla \Bv)_{\Omega}, \\ 
& a_{m}(\Bb, \Bc) = S R_{m}^{-1} (\nabla \times \Bb, \nabla \times \Bc)_{\Omega}, \\
& c_{0}(\Bw; \Bu, \Bv) = \frac{1}{2} (\Bw \cdot \nabla \Bu, \Bv)_{\Omega} - \frac{1}{2} (\Bw \cdot \nabla \Bv, \Bu)_{\Omega}, \\
& c_{1}(\Bd; \Bv, \Bc) = S ( (\nabla \times \Bc) \times \Bd, \Bv)_{\Omega} 
= - S ( \Bv \times \Bd, \nabla \times \Bc)_{\Omega}, 
\end{align*}
for any $\Bu, \Bv \in [H_{0}^{1}(\Omega)]^d$ and any $\Bb, \Bc, \Bd \in H_{0}(\text{curl}, \Omega)$ 
with $\Bd \in [ L^{3}(\Omega) ]^d$.

The exact solution $(\Bu, \Bb, p, r)$ of the MHD system (\ref{mhd_eqs}) satisfies the variational formulation 
\begin{subequations}
\label{mhd_var_eqs}
\begin{align}
\label{mhd_var_eq1}
a_{s}(\Bu, \Bv) + c_{0}(\Bu; \Bu, \Bv) -  c_{1}(\Bb; \Bv, \Bb) -  (p, \nabla \cdot \Bv)_{\Omega} 
= & (\Bf, \Bv)_{\Omega}, \\
\label{mhd_var_eq2}
a_{m}(\Bb, \Bc) + c_{1} (\Bb; \Bu, \Bc) - (\nabla r, \Bc)_{\Omega} = & (\Bg, \Bc)_{\Omega}, \\
\label{mhd_var_eq3}
(\nabla\cdot \Bu, q)_{\Omega} = & 0, \\
\label{mhd_var_eq4} 
(\Bb, \nabla s)_{\Omega} = & 0, 
\end{align}
\end{subequations}
for any $(\Bv, \Bc, q, s) \in [H_{0}^{1}(\Omega)]^d \times H_{0}(\text{curl}, \Omega) \times L_{0}^{2}(\Omega) 
\times H_{0}^{1}(\Omega)$. 

Let $\mathcal{T}_{h}$ denote a quasi-uniform  conforming triangulation of $\Omega$.  
On this triangulation, we define several finite element spaces by 
\begin{align*}
\BV_{h}^{l} &:=  [ H_{0}^{1}(\Omega) \cap  P_{l}(\Ct_{h})]^d, \\
\quad Q_{h}^{l} &:=   P_{l-1}(\Ct_{h}) \cap H^{1}(\Omega)  \cap L_{0}^{2}(\Omega), \\
\quad \BC_{h}^{k} &:= \{ \Bc_{h} \in H_{0}(\text{curl}, \Omega): \Bc_{h} |_{K}  \in [P_{k-1}(K)]^d \oplus D_{h}^{k}(K),  
\forall K \in \Ct_{h}\},\\
\quad S_{h}^{k} &:=  H_{0}^{1}(\Omega) \cap P_{k}(\Ct_{h})
\end{align*}
for $l \geq 2$ and $k \geq 1$, where $P_{l}(\Ct_{h}) = \{ w \in L^{2}(\Omega): w|_{K} \in P_{l}(K), 
\forall K \in \Ct_{h}  \}$, 
$D_{h}^{k}(K) = \{ \Bp \in [\tilde{P}_{k}(K)]^d :  \Bp (\Bx) \cdot \Bx = 0, \forall \Bx \in K  \}$ 
and 
$\tilde{P}_{k}(K)$ is the collection of  the $k$-th order homogeneous polynomials in $P_{k}(K)$. 
 $ \BC_{h}^{k}$ is actually 
the $k$-th order first type of N\'ed\'elec's edge element space. 

The mixed method in \cite{Shoetzau2004,ZhangHeYang} seeks an approximation $(\Bu_{h}, \Bb_{h}, p_{h}, r_{h})
\in \BV_{h}^{l}\times \BC_{h}^{k}\times Q_{h}^{k}\times S_{h}^{l}$ 
to the exact solution $(\Bu,\Bb, p, r)$ satisfying 
the following weak formulation:
\begin{subequations}
\label{MFEM_mhd}
\begin{align}
\label{MFEM_mhd_eq1}
a_{s}(\Bu_{h}, \Bv_h) + c_{0}(\Bu_{h}; \Bu_{h}, \Bv_h) - c_{1}(\Bb_{h}; \Bv_h, \Bb_{h}) 
 -  (p_{h}, \nabla\cdot\Bv_h)_{\Omega} & = (\Bf, \Bv_h)_{\Omega}, \\
\label{MFEM_mhd_eq2} 
a_{m}(\Bb_{h}, \Bc_h) + c_{1}(\Bb_{h}; \Bu_{h}, \Bc_h) - (\nabla r_{h}, \Bc_h)_{\Omega} & = (\Bg, \Bc_h)_{\Omega}, \\ 
\label{MFEM_mhd_eq3}
(\nabla\cdot \Bu_{h}, q_h)_{\Omega} &= 0,\\
\label{MFEM_mhd_eq4} 
(\Bb_{h}, \nabla s_h)_{\Omega} &= 0,
\end{align}
\end{subequations}
for all $(\Bv_h, \Bc_h, q_h, s_h)\in \BV_{h}^{l}\times \BC_{h}^{k}\times Q_{h}^{l}\times S_{h}^{k}$. 

The paper is focused on optimal error estimates of the mixed method defined in \refe{MFEM_mhd}. 
Iterative algorithms for solving the nonlinear algebraic system and 
their convergences were studied by several authors \cite{DongHeZhang, Phillips2016, Wathen2020,Wathen2017, ZhangHeYang} 
and numerical simulations 
on various practical models can be found in literature \cite{Badia2013, GaoQiu2019,  SchneebeliSchoetzau2003}.  
Analysis presented in this paper can be extended to many other mixed methods.

\subsection{Auxiliary results}
The mixed method defined in 
\refe{MFEM_mhd_eq1}-\refe{MFEM_mhd_eq4} was analyzed by several authors. 
In this subsection, we provide some existing results which shall be used in 
our analysis. 

Mimicking the space $\BX$ defined at the beginning of Section~\ref{sec_methods}, we introduce 
\begin{align}
\label{X_h_space}
\BX_{h} : = \{ \Bc_{h} \in \BC_{h}^{k}: (\Bc_{h}, \nabla s_{h})_{\Omega} = 0, \forall s_{h} \in S_{h}^{k}  \}.
\end{align}

\begin{lemma}
\label{lemma_poincare_embedding} 
(see \cite[($2.2$, $2.3$, $2.4$)]{ZhangHeYang})
There exist positive constants $\lambda_{0}, \lambda_{1}^{*}, \lambda_{1}$ and $\lambda_{2}$ such that 
\begin{subequations}
\label{poincare_embedding_eqs}
\begin{align}
\label{poincare_embedding_eq1}
& \lambda_{0} \Vert \Bc \Vert_{H(\text{curl}, \Omega)} \leq \Vert \nabla \times \Bc \Vert_{L^{2}(\Omega)}, 
\quad \forall \Bc \in \BX, \\
\label{poincare_embedding_eq2}
& \Vert v\Vert_{L^{3}(\Omega)} \leq \lambda_{1}^{*} \Vert \nabla v\Vert_{L^{2}(\Omega)}, \quad 
\Vert v\Vert_{L^{6}(\Omega)} \leq \lambda_{1} \Vert \nabla v\Vert_{L^{2}(\Omega)}, \quad \forall v \in H_{0}^{1}(\Omega), \\ 
\label{poincare_embedding_eq3}
& \Vert \Bc\Vert_{L^{3}(\Omega)} \leq \lambda_{2} \Vert \nabla \times \Bc\Vert_{L^{2}(\Omega)},  \quad \forall \Bc \in \BX.
\end{align}
\end{subequations}
\end{lemma}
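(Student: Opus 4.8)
The plan is to handle the three inequalities separately, since only \refe{poincare_embedding_eq1} and \refe{poincare_embedding_eq3} carry genuine content. For \refe{poincare_embedding_eq2} I would invoke the classical Sobolev embeddings on a bounded Lipschitz domain: $H_0^1(\Omega)\hookrightarrow L^6(\Omega)$ for $d\le 3$, and $L^3(\Omega)$ either interpolates between $L^2(\Omega)$ and $L^6(\Omega)$ or embeds directly via Gagliardo--Nirenberg--Sobolev; together with the Poincar\'e inequality $\Vert v\Vert_{L^2(\Omega)}\le C\Vert\nabla v\Vert_{L^2(\Omega)}$ on $H_0^1(\Omega)$ this produces the constants $\lambda_1^*$ and $\lambda_1$.

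For \refe{poincare_embedding_eq1} I would run a compactness--contradiction argument whose essential input is the compact embedding $\BX\hookrightarrow[L^2(\Omega)]^d$, valid on bounded Lipschitz domains (Weber--Picard type results for $H_0(\text{curl},\Omega)\cap H(\text{div},\Omega)$). If no admissible $\lambda_0$ existed, one could pick $\Bc_n\in\BX$ with $\Vert\Bc_n\Vert_{H(\text{curl},\Omega)}=1$ and $\Vert\nabla\times\Bc_n\Vert_{L^2(\Omega)}\to 0$, extract an $L^2$-convergent subsequence $\Bc_n\to\Bc$, and note that the limit satisfies $\nabla\times\Bc=\boldsymbol{0}$, $\nabla\cdot\Bc=0$ and $\Bn\times\Bc|_{\partial\Omega}=\boldsymbol{0}$. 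Since $\Omega$ is simply connected, $\Bc$ is the gradient of a harmonic function which, by the vanishing tangential trace and the connectedness of $\partial\Omega$, is constant, so $\Bc=\boldsymbol{0}$; but $\Vert\Bc_n\Vert_{L^2(\Omega)}^2=1-\Vert\nabla\times\Bc_n\Vert_{L^2(\Omega)}^2\to 1$ forces $\Vert\Bc\Vert_{L^2(\Omega)}=1$, a contradiction. In particular one obtains $\Vert\Bc\Vert_{L^2(\Omega)}\le C\Vert\nabla\times\Bc\Vert_{L^2(\Omega)}$ on $\BX$.

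For \refe{poincare_embedding_eq3} I would combine \refe{poincare_embedding_eq1} with a fractional regularity shift: on a bounded Lipschitz domain, $H_0(\text{curl},\Omega)\cap H(\text{div},\Omega)\hookrightarrow[H^s(\Omega)]^d$ for some $s\in(1/2,1]$ (with $s=1$ on a convex domain), hence $\BX\hookrightarrow[H^s(\Omega)]^d\hookrightarrow[L^3(\Omega)]^d$ for $d\le 3$; using $\nabla\cdot\Bc=0$ and \refe{poincare_embedding_eq1} then gives
\[
\Vert\Bc\Vert_{L^3(\Omega)}\le C\Vert\Bc\Vert_{H^s(\Omega)}\le C\Vert\Bc\Vert_{H(\text{curl},\Omega)}\le C\lambda_0^{-1}\Vert\nabla\times\Bc\Vert_{L^2(\Omega)},
\]
which is the assertion with $\lambda_2$ the product of the intervening constants. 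The main obstacle, for both \refe{poincare_embedding_eq1} and \refe{poincare_embedding_eq3}, is not MHD-specific but the functional-analytic machinery on a general Lipschitz domain---namely the compact embedding $\BX\hookrightarrow[L^2(\Omega)]^d$ and the $H^s$-regularity of divergence-free fields with vanishing tangential trace---which is precisely why these estimates are quoted from \cite{ZhangHeYang} rather than reproved here.
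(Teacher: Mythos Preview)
The paper does not give a proof of this lemma at all; it is stated with a citation to \cite{ZhangHeYang} and immediately followed by the next lemma. Your sketch is therefore not competing with anything in the paper --- and you yourself recognise this in the closing sentence.

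As a standalone argument your outline is sound and standard. Two small remarks. First, in the contradiction step for \refe{poincare_embedding_eq1} you need to check that the $L^2$-limit $\Bc$ actually lies in $\BX$ (closedness of $\BX$ in $[L^2(\Omega)]^d$ under the given convergences), which is routine but worth stating. Second, for \refe{poincare_embedding_eq3} on a general Lipschitz domain the guaranteed regularity is $H_0(\text{curl},\Omega)\cap H(\text{div},\Omega)\hookrightarrow [H^{1/2}(\Omega)]^d$ (Costabel), not necessarily $H^s$ with $s>1/2$; fortunately $H^{1/2}(\Omega)\hookrightarrow L^3(\Omega)$ already holds for $d\le 3$, so your chain of embeddings still closes with $s=1/2$.
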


\begin{lemma}
(\cite[Lemma~$2.1$]{ZhangHeYang})
\label{lemma_terms_ineqs}
It holds that 
\begin{subequations}
\label{coercive_terms_ineqs}
\begin{align}
\label{coercive_terms_ineq1}
a_{s}(\Bw, \Bv) + a_{m}(\Bd, \Bc) \leq & \max \{R_{e}^{-1}, R_{m}^{-1} \} \Vert (\Bw, \Bd)\Vert \cdot \Vert (\Bv, \Bc) \Vert, \\
\nonumber
& \qquad \qquad \forall (\Bw, \Bd), (\Bv, \Bc) \in [ H_{0}^{1}(\Omega) ]^d \times H_{0}(\text{curl}, \Omega), \\
\label{coercive_terms_ineq2}
a_{s}(\Bv, \Bv) + a_{m} (\Bc, \Bc) \geq & \min (R_{e}^{-1}, R_{m}^{-1} \lambda_{0}) \Vert (\Bv, \Bc) \Vert^{2}, \\
\nonumber
& \qquad \qquad \forall (\Bv, \Bc) \in [H_{0}^{1}(\Omega)]^d \times \BX 
\end{align}
\end{subequations}
and 
\begin{subequations}
\label{couple_trems_ineqs}
\begin{align}
\label{couple_terms_ineq1}
\sup_{\Bw, \tilde{\Bv}, \Bv \in [H_{0}^{1}(\Omega)]^d} \dfrac{c_{0}(\Bw; \tilde{\Bv}, \Bv)}{\Vert \nabla \Bw \Vert_{L^2(\Omega)}
\Vert \nabla \tilde{\Bv} \Vert_{L^2(\Omega)} \Vert  \nabla \Bv \Vert_{L^2(\Omega)}} = &  \lambda_{1} \lambda_{1}^{*}, \\
\label{couple_terms_ineq2}
\sup_{\Bd \in \BX, \Bv \in [H_{0}^{1}(\Omega)]^d, \Bc \in H_{0}(\text{curl}, \Omega)} 
\dfrac{c_{1}(\Bd; \Bv, \Bc)}{\Vert \Bd\Vert_{H(\text{curl},\Omega)} \Vert \nabla \Bv\Vert_{L^{2}(\Omega)} 
\Vert  \Bc \Vert_{H(\text{curl}, \Omega)} } = & S \lambda_{1} \lambda_{2}, \\
\label{couple_terms_ineq3}
\sup_{\Bd\in \BX, \tilde{\Bc}, \Bc \in H_{0}(\text{curl},\Omega), \Bw, \tilde{\Bv}, \Bv \in [H_{0}^{1}(\Omega)]^d} 
\dfrac{c_{0}(\Bw; \tilde{\Bv}, \Bv) -c_{1}(\Bd; \Bv, \tilde{\Bc}) + c_{1}(\Bd; \tilde{\Bv}, \Bc) }{\Vert (\Bw, \Bd)\Vert 
\cdot \Vert (\tilde{\Bv}, \tilde{\Bc})\Vert \cdot \Vert (\Bv, \Bc)\Vert } = & \widehat{N}_1\\
\nonumber
 := & \sqrt{2} \lambda_1 \max \{\lambda_1^{*}, \lambda_2 \}.
\end{align}
\end{subequations}
Here the constants $\lambda_{0}, \lambda_{1}^{*}, \lambda_{1}$ and $\lambda_{2}$ 
are introduced 
in Lemma~\ref{lemma_poincare_embedding}.
\end{lemma}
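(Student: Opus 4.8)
The plan is to treat the four estimates in turn. In each case the upper bound will follow from Hölder's inequality together with the embeddings collected in Lemma~\ref{lemma_poincare_embedding}, and the matching lower bound — which is what makes each displayed constant the exact supremum — from inserting suitable near-extremal test functions.

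The estimates \refe{coercive_terms_ineq1}--\refe{coercive_terms_ineq2} require only Cauchy--Schwarz. For \refe{coercive_terms_ineq1} one has $a_s(\Bw,\Bv)+a_m(\Bd,\Bc)\le R_e^{-1}\|\nabla\Bw\|_{L^2}\|\nabla\Bv\|_{L^2}+SR_m^{-1}\|\nabla\times\Bd\|_{L^2}\|\nabla\times\Bc\|_{L^2}$, and the discrete Cauchy--Schwarz inequality applied to this two-term sum — with the weights $1$ and $S$ that reconstitute the combined norm \refe{combined_norm1} — produces the factor $\max\{R_e^{-1},R_m^{-1}\}$. For \refe{coercive_terms_ineq2}, write $a_s(\Bv,\Bv)=R_e^{-1}\|\nabla\Bv\|_{L^2}^2$ and $a_m(\Bc,\Bc)=SR_m^{-1}\|\nabla\times\Bc\|_{L^2}^2$, bound the latter from below via \refe{poincare_embedding_eq1} (which applies since $\Bc\in\BX$), and balance weights the same way.

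For the trilinear terms I would first record two ``atomic'' continuity bounds. Expanding the definition of $c_0$ and estimating each of $(\Bw\cdot\nabla\tilde{\Bv},\Bv)_\Omega$ and $(\Bw\cdot\nabla\Bv,\tilde{\Bv})_\Omega$ by Hölder with exponents $(3,2,6)$ and then by \refe{poincare_embedding_eq2} gives $|c_0(\Bw;\tilde{\Bv},\Bv)|\le\lambda_1\lambda_1^*\|\nabla\Bw\|_{L^2}\|\nabla\tilde{\Bv}\|_{L^2}\|\nabla\Bv\|_{L^2}$, i.e.\ the ``$\le$'' direction of \refe{couple_terms_ineq1}. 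Using the identity $c_1(\Bd;\Bv,\Bc)=-S(\Bv\times\Bd,\nabla\times\Bc)_\Omega$, Hölder with exponents $(6,3,2)$, then \refe{poincare_embedding_eq2} and \refe{poincare_embedding_eq3} (the latter available because $\Bd\in\BX$), together with $\|\nabla\times\Bc\|_{L^2}\le\|\Bc\|_{H(\text{curl},\Omega)}$, gives $|c_1(\Bd;\Bv,\Bc)|\le S\lambda_1\lambda_2\|\Bd\|_{H(\text{curl},\Omega)}\|\nabla\Bv\|_{L^2}\|\Bc\|_{H(\text{curl},\Omega)}$, the ``$\le$'' direction of \refe{couple_terms_ineq2}. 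For \refe{couple_terms_ineq3} I then apply these three bounds to the three pieces of the numerator and collapse the resulting sum of products of seminorms into a product of combined norms: writing $a_i=\|\nabla(\cdot)\|_{L^2}$ and $b_i=\sqrt{S}\,\|(\cdot)\|_{H(\text{curl},\Omega)}$ for the three arguments (so that each combined norm is $\sqrt{a_i^2+b_i^2}$), the numerator is at most $\lambda_1\max\{\lambda_1^*,\lambda_2\}(a_1a_2a_3+b_1b_2a_3+b_1a_2b_3)$, and the elementary chain
\[
a_1a_2a_3+b_1b_2a_3+b_1a_2b_3=a_3(a_1a_2+b_1b_2)+b_3\,b_1a_2\le(a_3+b_3)\sqrt{a_1^2+b_1^2}\,\sqrt{a_2^2+b_2^2}\le\sqrt{2}\,\sqrt{a_1^2+b_1^2}\,\sqrt{a_2^2+b_2^2}\,\sqrt{a_3^2+b_3^2}
\]
(Cauchy--Schwarz applied to $a_1a_2+b_1b_2$ and to $b_1a_2$, then to $a_3+b_3$) delivers $\widehat{N}_1=\sqrt{2}\,\lambda_1\max\{\lambda_1^*,\lambda_2\}$.

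The hard part is the reverse direction, i.e.\ that the suprema in \refe{couple_terms_ineq1}--\refe{couple_terms_ineq3} equal these constants rather than merely being bounded by them. For \refe{couple_terms_ineq1}--\refe{couple_terms_ineq2} one must exhibit functions that nearly saturate Hölder's inequality and the embeddings \refe{poincare_embedding_eq2}--\refe{poincare_embedding_eq3} at the same time; because for $d=3$ the embedding $H_0^1(\Omega)\hookrightarrow L^6(\Omega)$ is not attained on a bounded domain, I would carry this out by a concentration argument — rescaling fixed profiles so as to localize near an interior point and passing to the sharp Sobolev constants on $\mathbb{R}^d$ — after which the ratio $\|\nabla\times\Bc\|_{L^2}/\|\Bc\|_{H(\text{curl},\Omega)}$ can also be pushed to $1$ by superimposing a fast oscillation. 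For \refe{couple_terms_ineq3} the three numerator terms share their arguments ($\Bv$ occurs in the first two, $\tilde{\Bv}$ in the first and third, $\Bd$ in the last two), so one must assemble a single family of test functions that is simultaneously near-optimal in all three pieces, with signs chosen to reinforce, and verify that the two Cauchy--Schwarz losses above are recovered in the limit; managing this coupling is the delicate point and the step I expect to demand most of the effort. For the error analysis that follows, only the ``$\le$'' directions of these three estimates are actually needed.
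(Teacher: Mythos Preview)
The paper does not prove this lemma at all; it is quoted verbatim from \cite[Lemma~2.1]{ZhangHeYang}, so there is no in-paper proof to compare against. Your arguments for the upper bounds in \refe{coercive_terms_ineq1}--\refe{coercive_terms_ineq2} and for the ``$\le$'' directions of \refe{couple_terms_ineq1}--\refe{couple_terms_ineq3} are correct and are exactly the standard H\"older-plus-embedding derivations one finds in the cited source; in particular your chain $a_1a_2a_3+b_1b_2a_3+b_1a_2b_3\le\sqrt{2}\prod_i\sqrt{a_i^2+b_i^2}$ is the right way to produce the factor $\sqrt{2}$ in $\widehat N_1$.

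Where you go astray is in reading the ``$=$'' signs literally. In Lemma~\ref{lemma_poincare_embedding} the constants $\lambda_0,\lambda_1,\lambda_1^*,\lambda_2$ are merely \emph{some} constants validating the embeddings, not the sharp ones; and even if they were sharp, the supremum of $c_0(\Bw;\tilde\Bv,\Bv)/\|\nabla\Bw\|\,\|\nabla\tilde\Bv\|\,\|\nabla\Bv\|$ would equal $\lambda_1\lambda_1^*$ only if one could simultaneously saturate H\"older with exponents $(3,2,6)$ and both Sobolev embeddings with the \emph{same} triple of functions --- which fails in general (the $H_0^1\hookrightarrow L^3$ extremizer on a bounded domain is domain-dependent and has no reason to align with a concentrating $L^6$-extremizer, nor to make $\Bw\cdot\nabla\tilde\Bv$ parallel to $\Bv$). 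The equalities in \refe{couple_terms_ineq1}--\refe{couple_terms_ineq3} should be read as ``$\le$'' (this is how they are used everywhere in the paper, as you yourself observe), and the concentration programme you sketch is both unnecessary and unlikely to recover these exact constants. Drop that part; the rest stands.
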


For any $\lambda > 0$, we define 
\begin{align} 
\label{eta_lambda}
\eta(\lambda): = \dfrac{\left( \Vert \Bf\Vert_{L^{2}(\Omega)} + S^{-1} \Vert \Bg\Vert_{L^{2}(\Omega)} \right)}{\left( 
\min \{ R_{e}^{-1}, R_{m}^{-1} \lambda \}  \right)^{2}}.
\end{align}
The well-posedness of the MHD system \refe{mhd_eqs} is given in the following lemma 
and the proof can be found in \cite{Shoetzau2004, ZhangHeYang}. 

\begin{lemma}
\label{lemma_mhd_wellposedness}
Suppose that 
\begin{align} 
\label{cond-1} 
\widehat N_1 \eta(\lambda_0) < 1, 
\end{align}  
where $\widehat{N}_{1}$ is introduced in (\ref{couple_terms_ineq3}) and 
and $\eta(\lambda_{0})$ is defined as (\ref{eta_lambda}) with $\lambda = \lambda_{0}$. 
Then the MHD system (\ref{mhd_eqs}) admits a unique solution 
$(\Bu, \Bb, p, r) \in [H_{0}^{1}(\Omega)]^d \times H_{0}(\text{curl}, \Omega) \times L_{0}^{2}(\Omega) 
\times H_{0}^{1}(\Omega)$ satisfying 
\begin{align}
\Vert (\Bu, \Bb) \Vert \leq 
 \eta(\lambda_0) \min \{ R_{e}^{-1}, R_{m}^{-1} \lambda_{0} \} \, . 
\label{ub-bound} 
\end{align}
\end{lemma}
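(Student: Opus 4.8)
The plan is to reformulate \refe{mhd_var_eqs} on the solenoidal subspaces and then solve the resulting reduced nonlinear problem by the contraction mapping principle, the smallness assumption \refe{cond-1} being exactly what makes the relevant map a contraction. Put $\BV := \{ \Bv \in [H_{0}^{1}(\Omega)]^{d} : \nabla\cdot\Bv = 0 \}$ and recall $\BX$ from Section~\ref{sec_methods}. Since $(p,\nabla\cdot\Bv)_{\Omega}$ vanishes for $\Bv\in\BV$ and $(\nabla r,\Bc)_{\Omega} = 0$ for $\Bc\in\BX$ (as $\nabla\cdot\Bc = 0$ and $r = 0$ on $\partial\Omega$), every solution of \refe{mhd_var_eqs} restricts to a pair $(\Bu,\Bb)\in\BV\times\BX$ with
\begin{align}
\label{reduced}
a_{s}(\Bu,\Bv) + a_{m}(\Bb,\Bc) + c_{0}(\Bu;\Bu,\Bv) - c_{1}(\Bb;\Bv,\Bb) + c_{1}(\Bb;\Bu,\Bc) = (\Bf,\Bv)_{\Omega} + (\Bg,\Bc)_{\Omega}
\end{align}
for all $(\Bv,\Bc)\in\BV\times\BX$. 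Conversely, from a solution of \refe{reduced} one recovers $p\in L_{0}^{2}(\Omega)$ in \refe{mhd_var_eq1} through the inf-sup stability of the velocity--pressure pair, and, using the $L^{2}$-orthogonal Helmholtz decomposition $H_{0}(\text{curl},\Omega) = \BX \oplus \nabla H_{0}^{1}(\Omega)$ together with $a_{m}(\Bb,\nabla s) = c_{1}(\Bb;\Bu,\nabla s) = 0$, one recovers $r\in H_{0}^{1}(\Omega)$ from the Poisson problem $(\nabla r,\nabla s)_{\Omega} = -(\Bg,\nabla s)_{\Omega}$, $s\in H_{0}^{1}(\Omega)$. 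Thus it suffices to show \refe{reduced} has a unique solution satisfying \refe{ub-bound}.

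Next I would linearize. For $(\Bw,\Bd)\in\BV\times\BX$ let $(\Bu,\Bb)=\mathcal{T}(\Bw,\Bd)\in\BV\times\BX$ be the solution of the linear problem obtained from \refe{reduced} by freezing the transporting field:
\begin{align}
\label{linmap}
a_{s}(\Bu,\Bv) + a_{m}(\Bb,\Bc) + c_{0}(\Bw;\Bu,\Bv) - c_{1}(\Bd;\Bv,\Bb) + c_{1}(\Bd;\Bu,\Bc) = (\Bf,\Bv)_{\Omega} + (\Bg,\Bc)_{\Omega}
\end{align}
for all $(\Bv,\Bc)\in\BV\times\BX$. The left-hand side is a bounded bilinear form in $(\Bu,\Bb)$ and $(\Bv,\Bc)$ by \refe{coercive_terms_ineq1} and \refe{couple_terms_ineq3}, and it is coercive on $[H_{0}^{1}(\Omega)]^{d}\times\BX$: the $a_{s}+a_{m}$ contribution is bounded below via \refe{coercive_terms_ineq2}, while the trilinear part contributes nothing on the diagonal, since $c_{0}(\Bw;\Bu,\Bu) - c_{1}(\Bd;\Bu,\Bb) + c_{1}(\Bd;\Bu,\Bb) = 0$. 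Hence $\mathcal{T}$ is well defined by the Lax--Milgram theorem.

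Testing \refe{linmap} with $(\Bv,\Bc)=(\Bu,\Bb)$, the trilinear part again drops, and \refe{coercive_terms_ineq2} together with the Cauchy--Schwarz and Poincaré inequalities and the definition \refe{combined_norm1} give $\Vert(\Bu,\Bb)\Vert \le \eta(\lambda_{0})\min\{R_{e}^{-1},R_{m}^{-1}\lambda_{0}\} =: \rho$, uniformly in $(\Bw,\Bd)$; so $\mathcal{T}$ maps the closed ball $\overline{B}_{\rho}\subset\BV\times\BX$ into itself. For the Lipschitz bound, take $(\Bw_{i},\Bd_{i})\in\overline{B}_{\rho}$, $(\Bu_{i},\Bb_{i})=\mathcal{T}(\Bw_{i},\Bd_{i})$, $i=1,2$, subtract the two instances of \refe{linmap} and test with $(\Bu_{1}-\Bu_{2},\Bb_{1}-\Bb_{2})$. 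The terms frozen at $(\Bw_{1},\Bd_{1})$ cancel on the diagonal, and what remains is precisely the combined trilinear form of \refe{couple_terms_ineq3} with first argument $(\Bw_{1}-\Bw_{2},\Bd_{1}-\Bd_{2})$ and middle argument $(\Bu_{2},\Bb_{2})$; estimating it by $\widehat{N}_{1}$ times the product of the three norms and using $\Vert(\Bu_{2},\Bb_{2})\Vert\le\rho$ yields $\Vert(\Bu_{1}-\Bu_{2},\Bb_{1}-\Bb_{2})\Vert \le \widehat{N}_{1}\eta(\lambda_{0})\,\Vert(\Bw_{1}-\Bw_{2},\Bd_{1}-\Bd_{2})\Vert$. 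By \refe{cond-1} the factor $\widehat{N}_{1}\eta(\lambda_{0})$ is $<1$, so $\mathcal{T}$ is a contraction on the complete metric space $\overline{B}_{\rho}$ and has a unique fixed point there; this fixed point solves \refe{reduced} and obeys \refe{ub-bound}, and uniqueness among all solutions of \refe{reduced} follows because each of them lies in $\overline{B}_{\rho}$ by the a priori bound. Recovering $p$ and $r$ as above completes the proof. The one step needing care rather than routine manipulation is the a priori estimate: one has to track how the coupling number $S$ enters \refe{combined_norm1} and $a_{m}$ (this is the origin of the weight $S^{-1}$ in front of $\Vert\Bg\Vert_{L^{2}(\Omega)}$ in \refe{eta_lambda}) so that the constant comes out exactly as $\rho$; the remainder is the textbook fixed-point argument for steady Navier--Stokes-type systems.
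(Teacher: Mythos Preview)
Your argument is correct and is the standard contraction-mapping proof for this class of stationary MHD systems. Note that the paper itself does not supply a proof of this lemma; it simply cites \cite{Shoetzau2004,ZhangHeYang}, and the fixed-point scheme you outline (reduce to the divergence-free subspaces $\BV\times\BX$, linearize by freezing the transport field, use \refe{coercive_terms_ineq2} for coercivity and \refe{couple_terms_ineq3} for the Lipschitz constant, then recover $p$ and $r$ via inf-sup and the Helmholtz decomposition) is precisely the approach used in those references.
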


\begin{lemma}
(\cite[Lemma~$7.20$]{Monk2003}, \cite[Lemma~$5.1$]{QiuShiMHD1})
\label{lemma_discrete_curl_embedding}
There exist positive constants $\lambda_{0}^{*}, \lambda_{2}^{*}$ independent of $h$ such that 
\begin{align}
\label{discrete_curl_embedding_ineq}
\lambda_{0}^{*} \Vert \Bc_h\Vert_{H(\text{curl},\Omega)} \leq \Vert \nabla \times \Bc_h\Vert_{L^{2}(\Omega)}, \quad
\Vert \Bc_h \Vert_{L^{3}(\Omega)} \leq \lambda_{2}^{*} \Vert \nabla \times \Bc_h \Vert_{L^{2}(\Omega)}, 
\quad \forall \Bc_h \in \BX_{h}
\end{align}
where the finite element space $\BX_{h}$ is introduced in (\ref{X_h_space}).
\end{lemma}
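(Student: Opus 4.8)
The plan is to read the two bounds as the discrete analogues of \eqref{poincare_embedding_eq1} and \eqref{poincare_embedding_eq3}, the only real issue being that the constants must not depend on $h$, which is delicate because $\BX_{h}$ is \emph{not} contained in the continuous space $\BX$. I would transfer the continuous estimates of Lemma~\ref{lemma_poincare_embedding} to the discrete level using the finite element de Rham complex, relying on three standard external inputs: (i) since $\Omega$ is simply connected, the finite element de Rham sequence built from $S_{h}^{k}$, $\BC_{h}^{k}$ and the associated face (divergence-conforming) space, with maps $\nabla$ and $\nabla\times$, is exact and $\nabla S_{h}^{k}\subset\BC_{h}^{k}$; in particular every curl-free field in $\BC_{h}^{k}$ equals $\nabla s_{h}$ for some $s_{h}\in S_{h}^{k}$; (ii) a projection $\pi_{h}:H_{0}(\text{curl},\Omega)\to\BC_{h}^{k}$ that is bounded on $L^{2}(\Omega)$ uniformly in $h$, has optimal $L^{2}$-approximation, and commutes with $\nabla\times$ in the form $\nabla\times\pi_{h}\Bv=\rho_{h}(\nabla\times\Bv)$ with $\rho_{h}$ acting as the identity on $\nabla\times\BC_{h}^{k}$ (a bounded cochain / quasi-interpolation operator); and (iii) the fractional regularity $\BX\hookrightarrow H^{s}(\Omega)$ for some $s>1/2$ on a Lipschitz polyhedron, with $\Vert\Bc\Vert_{H^{s}(\Omega)}\le C\Vert\nabla\times\Bc\Vert_{L^{2}(\Omega)}$.

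For the first inequality, since $\Vert\Bc_{h}\Vert_{H(\text{curl},\Omega)}^{2}=\Vert\Bc_{h}\Vert_{L^{2}(\Omega)}^{2}+\Vert\nabla\times\Bc_{h}\Vert_{L^{2}(\Omega)}^{2}$, it suffices to prove the discrete Friedrichs estimate $\Vert\Bc_{h}\Vert_{L^{2}(\Omega)}\le C\Vert\nabla\times\Bc_{h}\Vert_{L^{2}(\Omega)}$. Given $\Bc_{h}\in\BX_{h}$, a continuous Helmholtz decomposition produces $\Bz\in\BX$ with $\nabla\times\Bz=\nabla\times\Bc_{h}$ (adding a gradient does not change the curl, and every $H_{0}(\text{curl},\Omega)$ field differs from a field in $\BX$ by a gradient); by \eqref{poincare_embedding_eq1}, $\Vert\Bz\Vert_{L^{2}(\Omega)}\le\lambda_{0}^{-1}\Vert\nabla\times\Bc_{h}\Vert_{L^{2}(\Omega)}$. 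The commuting property gives $\nabla\times\pi_{h}\Bz=\rho_{h}(\nabla\times\Bz)=\rho_{h}(\nabla\times\Bc_{h})=\nabla\times\Bc_{h}$, so $\Bc_{h}-\pi_{h}\Bz\in\BC_{h}^{k}$ is curl-free, hence $\Bc_{h}-\pi_{h}\Bz=\nabla q_{h}$ for some $q_{h}\in S_{h}^{k}$ by (i). Since $\Bc_{h}\in\BX_{h}$ is $L^{2}$-orthogonal to $\nabla S_{h}^{k}$,
\[
  \Vert\Bc_{h}\Vert_{L^{2}(\Omega)}^{2}
  =(\Bc_{h},\pi_{h}\Bz)_{\Omega}+(\Bc_{h},\nabla q_{h})_{\Omega}
  =(\Bc_{h},\pi_{h}\Bz)_{\Omega}
  \le\Vert\Bc_{h}\Vert_{L^{2}(\Omega)}\,\Vert\pi_{h}\Bz\Vert_{L^{2}(\Omega)},
\]
and the $L^{2}$-boundedness of $\pi_{h}$ yields $\Vert\Bc_{h}\Vert_{L^{2}(\Omega)}\le C\Vert\Bz\Vert_{L^{2}(\Omega)}\le C\Vert\nabla\times\Bc_{h}\Vert_{L^{2}(\Omega)}$, i.e.\ the first claim with $\lambda_{0}^{*}=(1+C^{2})^{-1/2}$.

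For the $L^{3}$ bound I would keep the decomposition $\Bc_{h}=\pi_{h}\Bz+\nabla q_{h}$ and estimate the two pieces separately, now invoking the extra regularity $\Bz\in H^{s}(\Omega)$, $s>1/2$, with $\Vert\Bz\Vert_{H^{s}(\Omega)}\le C\Vert\nabla\times\Bc_{h}\Vert_{L^{2}(\Omega)}$. For the first piece, $H^{s}(\Omega)\hookrightarrow L^{3}(\Omega)$ (as $d\le3$) together with the stability of $\pi_{h}$ gives $\Vert\pi_{h}\Bz\Vert_{L^{3}(\Omega)}\le C\Vert\Bz\Vert_{H^{s}(\Omega)}\le C\Vert\nabla\times\Bc_{h}\Vert_{L^{2}(\Omega)}$ (alternatively, use an $L^{p}$-stable projection and \eqref{poincare_embedding_eq3} directly). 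For the gradient piece, $\Vert\nabla q_{h}\Vert_{L^{2}(\Omega)}$ is the norm of the $\nabla S_{h}^{k}$-component of $\pi_{h}\Bz$ in the orthogonal splitting $\BC_{h}^{k}=\BX_{h}\oplus\nabla S_{h}^{k}$; since $\Bz$ is divergence-free it is $L^{2}$-orthogonal to $\nabla S_{h}^{k}$, so $\Vert\nabla q_{h}\Vert_{L^{2}(\Omega)}\le\Vert\pi_{h}\Bz-\Bz\Vert_{L^{2}(\Omega)}\le Ch^{s}\Vert\Bz\Vert_{H^{s}(\Omega)}$ by the approximation property of $\pi_{h}$. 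An inverse estimate then gives $\Vert\nabla q_{h}\Vert_{L^{3}(\Omega)}\le Ch^{-d/6}\Vert\nabla q_{h}\Vert_{L^{2}(\Omega)}\le Ch^{\,s-d/6}\Vert\Bz\Vert_{H^{s}(\Omega)}$, which is uniformly bounded because $s>1/2\ge d/6$ for $d\le3$. Adding the two contributions yields the second inequality.

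The step I expect to be the genuine obstacle — and the reason the lemma is quoted from \cite{Monk2003,QiuShiMHD1} rather than reproved here — is ingredient (ii): exhibiting a projection onto $\BC_{h}^{k}$ that is simultaneously uniformly $L^{2}$-bounded, $\nabla\times$-commuting, and optimally approximating, since the classical N\'ed\'elec interpolant is neither $L^{2}$-bounded nor even defined on all of $L^{2}$. (One may trade (ii) for the discrete compactness property of edge elements and run a contradiction argument against \eqref{poincare_embedding_eq1}, but that merely relocates the difficulty.) Once (ii) and the fractional regularity of $\BX$ in (iii) are granted, the remaining manipulations are the elementary bookkeeping above.
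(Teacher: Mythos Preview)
The paper does not prove this lemma; it is quoted verbatim from \cite[Lemma~7.20]{Monk2003} and \cite[Lemma~5.1]{QiuShiMHD1} without argument, so there is no in-paper proof to compare against. Your approach---lifting $\nabla\times\Bc_{h}$ to a continuous divergence-free field $\Bz\in\BX$, pulling it back with a bounded commuting quasi-interpolant $\pi_{h}$, and using the discrete exact sequence to write $\Bc_{h}-\pi_{h}\Bz=\nabla q_{h}$---is exactly the standard one (it is essentially how Monk proves the first inequality), and the argument for the discrete Friedrichs bound is clean and correct.

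One step deserves more care. For the $L^{3}$ estimate of the piece $\pi_{h}\Bz$, you write ``the stability of $\pi_{h}$ gives $\Vert\pi_{h}\Bz\Vert_{L^{3}(\Omega)}\le C\Vert\Bz\Vert_{H^{s}(\Omega)}$'', but the smoothed projection you postulated in (ii) is only assumed $L^{2}$-bounded, which does not by itself yield $L^{3}$ control. The fix is straightforward and you hint at it in your parenthetical: either assume from the outset that $\pi_{h}$ is $L^{p}$-stable (the Christiansen--Winther type projection $\Pi_{h}^{\text{curl}}$ used later in the paper, cf.\ the proof of Theorem~\ref{thm_Maxwell_projection}, has this property), or insert an auxiliary piecewise-constant $L^{2}$ projection $Q_{h}\Bz$ and argue
\[
\Vert\pi_{h}\Bz\Vert_{L^{3}}
\le\Vert Q_{h}\Bz\Vert_{L^{3}}+Ch^{-d/6}\Vert\pi_{h}\Bz-Q_{h}\Bz\Vert_{L^{2}}
\le C\Vert\Bz\Vert_{L^{3}}+Ch^{s-d/6}\Vert\Bz\Vert_{H^{s}},
\]
which is exactly the inverse-inequality trick you already use for $\nabla q_{h}$. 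With that adjustment your proof is complete; your assessment that the only nontrivial external input is the existence of the bounded commuting projection is accurate.
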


The well-posedness of the finite element system and error estimates of finite element solutions were presented  in  
\cite{Shoetzau2004, ZhangHeYang}. With the above lemma, 
the well-posedness with a slightly weak condition is given in the following lemma.  
The proof follows those given in \cite{Shoetzau2004, ZhangHeYang} and is 
omitted here. 

\begin{lemma}
\label{thm_mfem_wellposedness}
Supposed that 
\begin{align} 
\label{cond-2} 
\widehat N_2 \eta(\lambda_{0}^{*}) < 1, 
\end{align} 
where $\widehat N_{2} = \sqrt{2} \lambda_{1} \max \{ \lambda_{1}^{*}, \lambda_{2}^{*} \}$ 
and $\eta(\lambda_{0}^{*})$ is defined as (\ref{eta_lambda}) with $\lambda = \lambda_{0}^{*}$.   
Then the mixed finite element system (\ref{MFEM_mhd}) admits a unique solution satisfying 
\begin{align}
\label{mfem_bound}
\Vert (\Bu_{h}, \Bb_{h}) \Vert \leq \eta(\lambda_0^*) 
\min \{ R_{e}^{-1}, R_{m}^{-1} \lambda_0^* \}   .
\end{align}
Here the constants $\lambda_{0}^{*}, \lambda_{2}^{*}$ are introduced 
in Lemma~\ref{lemma_discrete_curl_embedding}, while $\lambda_{1}, \lambda_{1}^{*}$ are defined in 
Lemma~\ref{lemma_poincare_embedding}.

In addition, 
\begin{align}
\label{mfem_conv}
\Vert \Bu - \Bu_{h}\Vert_{H^{1}(\Omega)} + \Vert \Bb - \Bb_{h} \Vert_{H(\text{curl}, \Omega)} 
\leq C (h^{l} + h^{k}) \, . 
\end{align}
\end{lemma}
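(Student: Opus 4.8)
The plan is to treat the two assertions in turn. For well-posedness and the a priori bound \refe{mfem_bound}, the discrete problem has exactly the same structure as the continuous one, so I would repeat the fixed-point argument of \cite{Shoetzau2004, ZhangHeYang} verbatim, with the only change being that every invocation of the continuous Poincar\'e/embedding constants $\lambda_0, \lambda_2$ on the space $\BX$ is replaced by the discrete constants $\lambda_0^*, \lambda_2^*$ on $\BX_h$ supplied by Lemma~\ref{lemma_discrete_curl_embedding}. Concretely: restrict attention to the discretely divergence-free subspace $\BV_h^l$ (with $(\nabla\cdot\Bu_h, q_h)_\Omega=0$) and $\BX_h$; on this subspace \refe{MFEM_mhd_eq3}--\refe{MFEM_mhd_eq4} are automatic and the Lagrange multipliers drop out, so that a Brouwer fixed-point/Banach contraction argument applies to the reduced nonlinear map. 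Coercivity \refe{coercive_terms_ineq2} (with $\lambda_0$ replaced by $\lambda_0^*$, which is legitimate since $\BX_h\subset\BX$ is false in general but the discrete inequality of Lemma~\ref{lemma_discrete_curl_embedding} is exactly what is needed) gives the lower bound $\min(R_e^{-1}, R_m^{-1}\lambda_0^*)\Vert(\Bv_h,\Bc_h)\Vert^2$; the trilinear bound \refe{couple_terms_ineq3}, with $\widehat N_2$ in place of $\widehat N_1$, controls the nonlinearity; and the smallness hypothesis \refe{cond-2} closes the argument, yielding both uniqueness and \refe{mfem_bound}. Recovering the pressure $p_h$ and the multiplier $r_h$ afterwards uses the discrete inf-sup stability of the Taylor-Hood pair and the fact that $\nabla S_h^k\subset\BC_h^k$ (the discrete de Rham property), exactly as in the cited references.

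For the convergence estimate \refe{mfem_conv}, I would use the standard C\'ea-type argument for nonlinear problems with a small data condition. Let $(\Pi_h\Bu, \Pi_h\Bb, \Pi_h p, \Pi_h r)$ be suitable finite-element interpolants/projections: the Scott-Zhang (or Taylor-Hood canonical) interpolant into $\BV_h^l$ and $Q_h^l$, and the N\'ed\'elec interpolant into $\BC_h^k$ together with the Lagrange interpolant into $S_h^k$. Subtracting the discrete equations \refe{MFEM_mhd} from the continuous ones \refe{mhd_var_eqs} tested against discrete functions gives a Galerkin-orthogonality relation for the error $(\Bu-\Bu_h, \Bb-\Bb_h)$; split this as (interpolation error) + (discrete error $\Btheta := \Pi_h\Bu - \Bu_h$, $\Bpsi := \Pi_h\Bb - \Bb_h$), test the error equation with $(\Btheta,\Bpsi)$, use coercivity on the left, and on the right bound the interpolation error terms by standard approximation theory, $O(h^l)$ for the fluid variables and $O(h^k)$ for $\Bb$ in $H(\mathrm{curl})$, $O(h^l)$ for $p$ and $O(h^k)$ for $r$. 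The quadratic (trilinear) terms $c_0$ and $c_1$ produce, after using \refe{couple_terms_ineq3} and the bounds \refe{ub-bound}, \refe{mfem_bound}, contributions of the form $\widehat N_2\,\eta\,\Vert(\Btheta,\Bpsi)\Vert^2$ plus interpolation-error cross terms; moving the former to the left-hand side and absorbing it via \refe{cond-2} yields $\Vert(\Btheta,\Bpsi)\Vert \le C(h^l+h^k)$, and the triangle inequality finishes the proof.

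The main obstacle, and the only step that is not a routine transcription of \cite{Shoetzau2004, ZhangHeYang}, is the handling of the \emph{non-conforming} divergence constraint on the magnetic side: the discrete space $\BX_h$ is \emph{not} a subspace of $\BX$, because functions in $\BC_h^k$ need not be exactly divergence-free, so one cannot directly apply the continuous embedding \refe{poincare_embedding_eq3} to discrete fields, nor is the N\'ed\'elec interpolant of the exact $\Bb$ automatically in $\BX_h$. This is precisely why Lemma~\ref{lemma_discrete_curl_embedding} is invoked, and the argument must be arranged so that every estimate touching a discrete magnetic field uses \refe{discrete_curl_embedding_ineq} rather than its continuous analogue; in particular the error $\Bb-\Bb_h$ must be kept as one object (not split at a non-conforming intermediate) until after coercivity is applied. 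A secondary technical point is that $c_1$ requires an $L^3$ factor, so controlling $c_1(\Bpsi;\cdot,\cdot)$ and $c_1(\cdot;\cdot,\Bpsi)$ needs the discrete $L^3$-embedding of Lemma~\ref{lemma_discrete_curl_embedding} uniformly in $h$; once that is in hand the rest is bookkeeping. I note that \refe{mfem_conv} is only the suboptimal estimate that matches \refe{error-lk}--\refe{error-1}; the improved $O(h^l+h^{k+1})$ bound for $\Vert\Bu-\Bu_h\Vert_{H^1}$ promised in the introduction is not claimed here and will require the modified Maxwell projection and a negative-norm duality argument developed in Section~3.
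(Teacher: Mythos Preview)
Your proposal is correct and matches what the paper does: the paper gives no proof of this lemma at all, stating only that ``the proof follows those given in \cite{Shoetzau2004, ZhangHeYang} and is omitted here,'' with the sole modification being the use of the discrete embedding constants $\lambda_0^*, \lambda_2^*$ from Lemma~\ref{lemma_discrete_curl_embedding} in place of their continuous counterparts. Your sketch of the fixed-point argument for well-posedness and the C\'ea-type argument for \refe{mfem_conv}, together with your identification of the non-conformity $\BX_h\not\subset\BX$ as the only non-routine point (resolved precisely by Lemma~\ref{lemma_discrete_curl_embedding}), is exactly the adaptation the paper has in mind.
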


\begin{remark}
We can see that the condition~(\ref{cond-2}) and discrete inverse inequality
implies the condition~(\ref{cond-1}) and from \refe{mfem_conv} that 
\begin{align} 
\| \Bu_h \|_{W^{1,p}(\Omega)} \le C \qquad \mbox{ for } p \le 6 \, . 
\label{bound}
\end{align}  
\end{remark}

\subsection{Main results} 

Under the assumptions of Lemma \ref{lemma_mhd_wellposedness},  
the MHD system \refe{mhd_eqs} is well-posed. 
We further assume that 
the solution satisfies the following regularity condition: 
 there exists a positive constant $K$, such that 
\begin{align}
\Vert \Bb \Vert_{W^{1, d^{+}}(\Omega)}  + 
 \Vert \nabla \times\Bb \Vert_{W^{1, d^{+}}(\Omega)} +   
\Vert \Bu\Vert_{H^{l+1}(\Omega)} & + 
 \Vert p\Vert_{H^{l}(\Omega)} +  \Vert \Bb\Vert_{H^{k}(\Omega)} 
 \nn \\ 
 & +   
\Vert \nabla \times \Bb\Vert_{H^{k}(\Omega)} +  \Vert r\Vert_{H^{k+1}(\Omega)}
 \leq K.
 \label{assump_K}
\end{align}
Here $d^{+}$ denotes a constant strictly bigger than $d$.

Our main result is the following Theorem~\ref{thm_main_result}.
\begin{theorem}
\label{thm_main_result}
We assume that the domain $\Omega$ is a convex polygon or polyhedra in $\mathbb{R}^d$ and the conditions 
(\ref{cond-2}, \ref{assump_K}) hold. 
Then the mixed finite element system (\ref{MFEM_mhd}) admits a unique solution 
and there exists $h_0 > 0$ such that when $h \le h_0$, 
\begin{align}
\label{main} 
& \Vert \Bu - \Bu_{h}\Vert_{H^{1}(\Omega)} + \Vert \tilde{\Bb}_{h} - \Bb_{h} \Vert_{H(\text{curl}, \Omega)}
\leq C_{1} \left( h^{l} + h^{k+1} \right), \\
\nonumber 
& \Vert \Bb - \Bb_{h}\Vert_{H(\text{curl}, \Omega)} \leq C_{1} \left( h^{l} + h^{k} \right),
\end{align} 
where $C_1$ is a positive constant depending upon the physical parameters 
$S, R_m, R_e$, the domain $\Omega$ and the constant $K$ introduced in (\ref{assump_K}). 
Here $\tilde{\Bb}_{h} \in \BC_{h}^{k}$ is a projection of $(\Bb, r)$ defined below in (\ref{Maxwell_projection}). 
\end{theorem}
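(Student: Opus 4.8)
\medskip
\noindent\textbf{Proof plan.}
Existence and uniqueness of the discrete solution, together with the crude estimate $\|\Bu-\Bu_h\|_{H^1(\Omega)}+\|\Bb-\Bb_h\|_{H(\text{curl},\Omega)}\le C(h^l+h^k)$, are already supplied by Lemma~\ref{thm_mfem_wellposedness} under~\eqref{cond-2}; the new content is the sharpened velocity bound and the estimate on $\tilde{\Bb}_{h}-\Bb_h$, and the strategy will be to bootstrap from the crude bound. The central device is to compare $\Bb_h$ with the modified Maxwell projection $(\tilde{\Bb}_{h},\tilde{r}_{h})\in\BC_h^k\times S_h^k$ of $(\Bb,r)$ from~\eqref{Maxwell_projection}, two of whose properties I will invoke from Section~3: (i) the \emph{superconvergent} estimate $\|\Bb-\tilde{\Bb}_{h}\|_{L^2(\Omega)}\le Ch^{k+1}$ (which also yields $\|\nabla\times(\Bb-\tilde{\Bb}_{h})\|_{H^{-1}(\Omega)}\le Ch^{k+1}$), together with the plain $\|\Bb-\tilde{\Bb}_{h}\|_{H(\text{curl},\Omega)}+\|\Bb-\tilde{\Bb}_{h}\|_{L^3(\Omega)}\le Ch^k$ --- this $L^2$-gain of one power is precisely where the convexity of $\Omega$ enters, through an Aubin--Nitsche argument against the dual curl--curl problem with divergence constraint and the dual Poisson problem, whose $H^{1+s}$/$H^2$-regularity on the convex polytope removes the $O(h^k)$ part of the dual interpolation error; and (ii) the Galerkin orthogonality $a_m(\Bb-\tilde{\Bb}_{h},\Bc_h)+c_1(\Bb-\tilde{\Bb}_{h};\Bu_h,\Bc_h)-(\nabla(r-\tilde{r}_{h}),\Bc_h)_\Omega=0$ together with $(\Bb-\tilde{\Bb}_{h},\nabla s_h)_\Omega=0$ for all $(\Bc_h,s_h)\in\BC_h^k\times S_h^k$. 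On the fluid side I use the standard Stokes projection $(\Bu^{*},p^{*})\in\BV_h^l\times Q_h^l$ of $(\Bu,p)$, satisfying $a_s(\Bu-\Bu^{*},\Bv_h)-(p-p^{*},\nabla\cdot\Bv_h)_\Omega=0$ and $(\nabla\cdot(\Bu-\Bu^{*}),q_h)_\Omega=0$ for all $(\Bv_h,q_h)$, with $\|\Bu-\Bu^{*}\|_{H^1(\Omega)}+\|p-p^{*}\|_{L^2(\Omega)}\le Ch^l$ by~\eqref{assump_K} and inf-sup stability; write $\bbeta:=\Bu-\Bu^{*}$.

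Set $\Btheta_u:=\Bu^{*}-\Bu_h$, $\Btheta_b:=\tilde{\Bb}_{h}-\Bb_h$, $\theta_p:=p^{*}-p_h$, $\theta_r:=\tilde{r}_{h}-r_h$. From~\eqref{MFEM_mhd_eq3},~\eqref{MFEM_mhd_eq4} and the projection constraints, $(\nabla\cdot\Btheta_u,q_h)_\Omega=0$ and $(\Btheta_b,\nabla s_h)_\Omega=0$, so in particular $\Btheta_b\in\BX_h$. Subtracting~\eqref{MFEM_mhd} from~\eqref{mhd_var_eqs}, testing the momentum residual with $\Bv_h=\Btheta_u$, the magnetic residual with $\Bc_h=\Btheta_b$, and adding: the pressure and multiplier terms drop out, the Stokes orthogonality removes the $a_s$-consistency of $\bbeta$, and property~(ii) removes the $(\Bb-\tilde{\Bb}_{h})$-consistency of the magnetic equation. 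Using~\eqref{coercive_terms_ineq2} and Lemma~\ref{lemma_discrete_curl_embedding} on the left, this produces an inequality of the form
\[
\min\{R_e^{-1},R_m^{-1}\lambda_0^{*}\}\,\|(\Btheta_u,\Btheta_b)\|^2\le\widehat N_2\,\eta(\lambda_0^{*})\,\min\{R_e^{-1},R_m^{-1}\lambda_0^{*}\}\,\|(\Btheta_u,\Btheta_b)\|^2+\mathcal{R},
\]
in which the first right-hand term collects all the trilinear cross terms whose coefficient is one of the small quantities $\|(\Bu,\Bb)\|$ or $\|(\Bu_h,\Bb_h)\|$ (bounded via~\eqref{ub-bound},~\eqref{mfem_bound} and estimated exactly as in the proof of Lemma~\ref{thm_mfem_wellposedness} through~\eqref{couple_terms_ineq1}--\eqref{couple_terms_ineq3} and their $\BX_h$-analogues), and $\mathcal{R}$ collects the genuine approximation residuals, bounded next.

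The benign residuals in $\mathcal{R}$, namely $c_0(\bbeta;\Bu,\Btheta_u)+c_0(\Bu_h;\bbeta,\Btheta_u)$ from the momentum equation and $c_1(\Bb;\bbeta,\Btheta_b)$ from the magnetic equation, are $\le Ch^l\|(\Btheta_u,\Btheta_b)\|$ by H\"older, the embeddings of Lemma~\ref{lemma_poincare_embedding} and~\eqref{assump_K}. The delicate residuals are those in which $\Bb-\tilde{\Bb}_{h}$ is paired with $\Btheta_u$ via $c_1$; the operative principle is that $\Bb-\tilde{\Bb}_{h}$ must be made to appear only in its $L^2$-norm (equivalently through $\|\nabla\times(\Bb-\tilde{\Bb}_{h})\|_{H^{-1}(\Omega)}$), never through $\|\nabla\times(\Bb-\tilde{\Bb}_{h})\|_{L^2(\Omega)}$, which is only $O(h^k)$. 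The term $c_1(\Bb-\tilde{\Bb}_{h};\Btheta_u,\Bb)=S((\nabla\times\Bb)\times(\Bb-\tilde{\Bb}_{h}),\Btheta_u)_\Omega$ is at once $\le C\|\nabla\times\Bb\|_{L^\infty(\Omega)}\|\Bb-\tilde{\Bb}_{h}\|_{L^2(\Omega)}\|\Btheta_u\|_{L^2(\Omega)}\le Ch^{k+1}\|\nabla\Btheta_u\|_{L^2(\Omega)}$ by~\eqref{assump_K}. The genuinely awkward term is $c_1(\Bb_h;\Btheta_u,\Bb-\tilde{\Bb}_{h})=-S(\Btheta_u\times\Bb_h,\nabla\times(\Bb-\tilde{\Bb}_{h}))_\Omega$, because $\Btheta_u\times\Bb_h\notin[H_0^1(\Omega)]^d$ and cannot be paired directly with $\nabla\times(\Bb-\tilde{\Bb}_{h})$ in the $H_0^1$--$H^{-1}$ duality; the remedy is to substitute $\Bb_h=\Bb-(\Bb-\tilde{\Bb}_{h})-\Btheta_b$ (and likewise in every other occurrence of $\Bb_h$ that multiplies $\Bb-\tilde{\Bb}_{h}$), reducing the dangerous piece to $c_1(\Bb;\Btheta_u,\Bb-\tilde{\Bb}_{h})=-S(\nabla\times(\Btheta_u\times\Bb),\Bb-\tilde{\Bb}_{h})_\Omega$ after a now-legitimate integration by parts --- since $\Btheta_u$ vanishes on $\partial\Omega$, $\Btheta_u\times\Bb\in[H_0^1(\Omega)]^d$ with $\|\Btheta_u\times\Bb\|_{H^1(\Omega)}\le C\|\nabla\Btheta_u\|_{L^2(\Omega)}$ by~\eqref{assump_K} --- so this piece is again $\le Ch^{k+1}\|\nabla\Btheta_u\|_{L^2(\Omega)}$. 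The leftovers of the substitutions are either quadratic in $\Bb-\tilde{\Bb}_{h}$, contributing $O(h^{2k})\|\nabla\Btheta_u\|_{L^2(\Omega)}$, or mixed $(\Bb-\tilde{\Bb}_{h})$--$\Btheta$ products, and the cubic term $c_1(\Btheta_b;\Btheta_u,\Btheta_b)$ --- all bounded by $Ch\,\|(\Btheta_u,\Btheta_b)\|^2$, where the crude bound of Lemma~\ref{thm_mfem_wellposedness} is used for the cubic part. Finally, the one $c_1$-term carrying $\Bb-\tilde{\Bb}_{h}$ in the magnetic equation, $c_1(\Bb-\tilde{\Bb}_{h};\Bu_h,\Btheta_b)$, cancels by property~(ii). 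Altogether $\mathcal{R}\le C(h^l+h^{k+1})\,\|(\Btheta_u,\Btheta_b)\|+Ch\,\|(\Btheta_u,\Btheta_b)\|^2$.

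Since $\widehat N_2\,\eta(\lambda_0^{*})<1$ by~\eqref{cond-2}, absorbing the first right-hand term and the $Ch\,\|(\Btheta_u,\Btheta_b)\|^2$ term into the left-hand side for all $h\le h_0$ with $h_0$ small enough gives $\|(\Btheta_u,\Btheta_b)\|\le C(h^l+h^{k+1})$, i.e.\ $\|\Bu^{*}-\Bu_h\|_{H^1(\Omega)}+\|\tilde{\Bb}_{h}-\Bb_h\|_{H(\text{curl},\Omega)}\le C(h^l+h^{k+1})$. Combining with $\|\Bu-\Bu^{*}\|_{H^1(\Omega)}\le Ch^l$ gives the velocity estimate in~\eqref{main}, and combining $\|\tilde{\Bb}_{h}-\Bb_h\|_{H(\text{curl},\Omega)}\le C(h^l+h^{k+1})$ with $\|\Bb-\tilde{\Bb}_{h}\|_{H(\text{curl},\Omega)}\le Ch^k$ gives the second line. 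I expect the principal difficulty to be property~(i), the superconvergent bound $\|\Bb-\tilde{\Bb}_{h}\|_{L^2(\Omega)}\le Ch^{k+1}$: it dictates the precise definition of the modified projection (so that its Galerkin orthogonality annihilates the leading term of the dual interpolation error) and relies on the elliptic regularity of the dual curl--curl and Poisson problems on the convex polytope; the secondary difficulty is the bookkeeping of the third paragraph --- in particular the substitution $\Bb_h=\Bb-(\Bb-\tilde{\Bb}_{h})-\Btheta_b$ --- which guarantees that $\Bb-\tilde{\Bb}_{h}$ never enters the velocity estimate through $\|\nabla\times(\Bb-\tilde{\Bb}_{h})\|_{L^2(\Omega)}$.
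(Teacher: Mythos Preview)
Your overall strategy matches the paper's: split the errors through the Stokes projection and the modified Maxwell projection, derive a coercivity--plus--residuals inequality, and show that every residual involving $\xi_{\Bb}=\Bb-\tilde{\Bb}_h$ paired with $\Btheta_u$ is $O(h^{k+1})$ by exploiting a superconvergence property of the projection. However, there is a genuine gap in your property~(i): the estimate $\|\Bb-\tilde{\Bb}_h\|_{L^2(\Omega)}\le Ch^{k+1}$ is \emph{false} for first-kind N\'ed\'elec spaces of order $k$, since these contain only $[P_{k-1}]^d$ fully and the best $L^2$ approximation is merely $O(h^k)$. What the duality argument on the convex polytope actually delivers (and what the paper proves in Theorem~\ref{thm_Maxwell_projection}) is the weaker pair
\[
\|\xi_{\Bb}\|_{H^{-1}(\Omega)}+\|\nabla\times\xi_{\Bb}\|_{H^{-1}(\Omega)}\le Ch^{k+1},
\]
because the dual regularity $\phi\in H^2(\Omega)$ needs $\Btheta\in H(\mathrm{div},\Omega)$ (hence only $H^{-1}$ duality works), not $\Btheta\in L^2$. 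Your handling of the two key residuals must therefore be adjusted: write $c_1(\xi_{\Bb};\Btheta_u,\Bb)=S(\xi_{\Bb},\Btheta_u\times(\nabla\times\Bb))_\Omega$ and bound it by $\|\xi_{\Bb}\|_{H^{-1}}\|\Btheta_u\times(\nabla\times\Bb)\|_{H^1}$, using $\nabla\times\Bb\in W^{1,d^+}$ from~\eqref{assump_K} to control the $H^1$ norm of the product; and for $c_1(\Bb;\Btheta_u,\xi_{\Bb})=-S(\Btheta_u\times\Bb,\nabla\times\xi_{\Bb})_\Omega$, pair directly against $\|\nabla\times\xi_{\Bb}\|_{H^{-1}}$ since $\Btheta_u\times\Bb\in[H_0^1(\Omega)]^d$ --- your integration by parts is unnecessary and leads you back to the unavailable $L^2$ norm. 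This is exactly how the paper proceeds in~\eqref{negative_norm_key}.

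A secondary slip: the paper's modified Maxwell projection~\eqref{Maxwell_projection} is defined with the exact velocity $\Bu$, not $\Bu_h$, in the $c_1$ slot, so your property~(ii) is misstated and the claimed exact cancellation of $c_1(\xi_{\Bb};\Bu_h,\Btheta_b)$ does not occur as written. What actually cancels is $c_1(\xi_{\Bb};\Bu,\Btheta_b)$; the discrepancy $c_1(\xi_{\Bb};\Bu-\Bu_h,\Btheta_b)$ is harmless (it is $O(h^{2k})\|\Btheta_b\|_{H(\mathrm{curl})}$ via $\|\xi_{\Bb}\|_{L^3}\le Ch^k$ and the crude bound of Lemma~\ref{thm_mfem_wellposedness}), but you should track it. With these two corrections your argument coincides with the paper's.
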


\begin{corollary} 
\label{main-coro}
Under the assumptions of Theorem~\ref{thm_main_result}, it holds that 
\begin{align}
\label{main-2} 
\Vert \Bu - \Bu_{h}\Vert_{L^2(\Omega)} + \Vert \Bb - \Bb_{h} \Vert_{H^{-1}(\Omega)} 
\leq C_{2} \left( h^{l+1} + h^{k+1} \right),
\end{align} 
where $C_2$ is a positive constant depending upon the physical parameters 
$S, R_m, R_e$, the domain $\Omega$ and the constant 
$K$ introduced in (\ref{assump_K}). 
\end{corollary}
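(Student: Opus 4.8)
\medskip
\noindent\emph{Plan of the proof.}
The plan is to obtain \refe{main-2} by an Aubin--Nitsche duality argument built on the $H^1$/$H(\mathrm{curl})$ estimates already at hand. Write $\Be_u=\Bu-\Bu_h$, $\Be_b=\Bb-\Bb_h$, $\Be_p=p-p_h$, $\Be_r=r-r_h$; by Theorem~\ref{thm_main_result} and the standard analysis of \cite{Shoetzau2004,ZhangHeYang}, $\|\Be_u\|_{H^1(\Omega)}\le C(h^l+h^{k+1})$, $\|\Be_b\|_{H(\mathrm{curl},\Omega)}\le C(h^l+h^k)$, $\|\Be_p\|_{L^2(\Omega)}\le C(h^l+h^k)$, and $\|\nabla\Be_r\|_{L^2(\Omega)}\le Ch^k$ (the last because, testing \refe{MFEM_mhd_eq2} and \refe{mhd_var_eq2} against discrete gradients, $r_h$ is the Ritz projection of $r\in H^{k+1}(\Omega)$ onto $S_h^k$). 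For arbitrary data $\Btheta\in[L^2(\Omega)]^d$ and $\boldsymbol{\chi}\in[H^1_0(\Omega)]^d$, the first step is to set up the adjoint problem: find $(\boldsymbol{\Phi},\boldsymbol{\Psi},\xi,\zeta)\in[H^1_0(\Omega)]^d\times H_0(\mathrm{curl},\Omega)\times L^2_0(\Omega)\times H^1_0(\Omega)$ such that $\mathcal L\big((\Bv,\Bc,q,s),(\boldsymbol{\Phi},\boldsymbol{\Psi},\xi,\zeta)\big)=(\Btheta,\Bv)_\Omega+(\boldsymbol{\chi},\Bc)_\Omega$ for all $(\Bv,\Bc,q,s)$ in the same spaces, where $\mathcal L$ is the bilinear form obtained from the left-hand side of \refe{mhd_var_eqs} by linearizing the trilinear terms at the exact solution $(\Bu,\Bb)$ (so that $c_0(\Bu;\Bu,\cdot)$ becomes $c_0(\Bu;\cdot,\cdot)+c_0(\cdot;\Bu,\cdot)$ and $c_1(\Bb;\cdot,\Bb)$, $c_1(\Bb;\Bu,\cdot)$ are replaced by their first-order increments). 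Since \refe{cond-2} implies \refe{cond-1}, the contraction argument behind Lemmas~\ref{lemma_mhd_wellposedness} and \ref{thm_mfem_wellposedness} shows this adjoint system is uniquely solvable with $\|(\boldsymbol{\Phi},\boldsymbol{\Psi})\|+\|\xi\|_{L^2(\Omega)}+\|\zeta\|_{H^1(\Omega)}\le C(\|\Btheta\|_{L^2(\Omega)}+\|\boldsymbol{\chi}\|_{H^1(\Omega)})$. Using $\Bu,\Bb\in L^\infty(\Omega)$ (from \refe{assump_K} and \refe{bound}) and elliptic regularity on the convex polytope $\Omega$, the second step is to bootstrap this to $\boldsymbol{\Phi}\in[H^2(\Omega)]^d$, $\xi\in H^1(\Omega)$ (the $\boldsymbol{\Phi}$-block is an Oseen problem with $L^2$ data), $\boldsymbol{\Psi},\ \nabla\times\boldsymbol{\Psi}\in[H^1(\Omega)]^d$ (the $\boldsymbol{\Psi}$-block, together with $\nabla\cdot\boldsymbol{\Psi}=0$ and $\Bn\times\boldsymbol{\Psi}|_{\partial\Omega}=\boldsymbol{0}$, is a $\mathrm{curl}\,\mathrm{curl}$ problem with $L^2$ data), and $\zeta\in H^2(\Omega)$ (its source is the divergence of an $L^2$ field once $\boldsymbol{\chi}\in H^1$), with the corresponding stability constants.

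The third step is the error representation. Subtracting \refe{MFEM_mhd} from \refe{mhd_var_eqs} and splitting each nonlinear difference into its linearization at $(\Bu,\Bb)$ plus a quadratic remainder, one obtains, for all discrete $(\Bv_h,\Bc_h,q_h,s_h)\in\BV_h^l\times\BC_h^k\times Q_h^l\times S_h^k$,
\begin{align*}
\mathcal L\big((\Be_u,\Be_b,\Be_p,\Be_r),(\Bv_h,\Bc_h,q_h,s_h)\big)=\mathcal R\big((\Be_u,\Be_b),(\Bv_h,\Bc_h)\big),\qquad
\mathcal R:=c_0(\Be_u;\Be_u,\Bv_h)+c_1(\Be_b;\Bv_h,\Be_b)-c_1(\Be_b;\Be_u,\Bc_h).
\end{align*}
Taking $(\Bv,\Bc,q,s)=(\Be_u,\Be_b,\Be_p,\Be_r)$ in the adjoint problem and then subtracting the identity above evaluated at a discrete approximation $(\boldsymbol{\Phi}_h,\boldsymbol{\Psi}_h,\xi_h,\zeta_h)$ of $(\boldsymbol{\Phi},\boldsymbol{\Psi},\xi,\zeta)$ gives
\begin{align*}
(\Btheta,\Be_u)_\Omega+(\boldsymbol{\chi},\Be_b)_\Omega=\mathcal L\big((\Be_u,\Be_b,\Be_p,\Be_r),(\boldsymbol{\Phi}-\boldsymbol{\Phi}_h,\boldsymbol{\Psi}-\boldsymbol{\Psi}_h,\xi-\xi_h,\zeta-\zeta_h)\big)+\mathcal R\big((\Be_u,\Be_b),(\boldsymbol{\Phi}_h,\boldsymbol{\Psi}_h)\big).
\end{align*}
Here I would take $\boldsymbol{\Phi}_h$ the Lagrange interpolant in $\BV_h^l$, $\xi_h$ the $L^2$-projection onto $Q_h^l$, $\zeta_h$ the Lagrange interpolant in $S_h^k$, and $\boldsymbol{\Psi}_h\in\BC_h^k$ a discretely divergence-free approximation of $\boldsymbol{\Psi}$ (the analogue of the modified Maxwell projection \refe{Maxwell_projection}, or the N\'ed\'elec interpolant followed by a discrete Helmholtz projection), so that the adjoint regularity yields $\|\nabla(\boldsymbol{\Phi}-\boldsymbol{\Phi}_h)\|_{L^2}+\|\xi-\xi_h\|_{L^2}+\|\nabla(\zeta-\zeta_h)\|_{L^2}+\|\boldsymbol{\Psi}-\boldsymbol{\Psi}_h\|_{H(\mathrm{curl})}\le Ch\,(\|\Btheta\|_{L^2(\Omega)}+\|\boldsymbol{\chi}\|_{H^1(\Omega)})$.

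The last step is to bound the two terms on the right. In the $\mathcal L$-term, after expanding $\mathcal L$ and applying the continuity bounds of Lemmas~\ref{lemma_poincare_embedding} and \ref{lemma_terms_ineqs} (together with $\Bu,\Bb,\nabla\times\Bb\in L^\infty(\Omega)$ and $\nabla\times\boldsymbol{\Psi}\in[H^1(\Omega)]^d$), every contribution is a product of an error norm with an $O(h)$ approximation quantity: the dissipative, convective and coupling terms directly; the pressure term through $\nabla\cdot\boldsymbol{\Phi}=0$ and $\|\nabla\cdot\boldsymbol{\Phi}_h\|_{L^2}\le Ch\|\boldsymbol{\Phi}\|_{H^2}$; the continuity term through $(\nabla\cdot\Bu_h,q_h)_\Omega=0$; the term $(\Be_b,\nabla(\zeta-\zeta_h))$ through $\|\Be_b\|_{L^2}\le\|\Be_b\|_{H(\mathrm{curl})}$ and $\|\nabla(\zeta-\zeta_h)\|_{L^2}\le Ch\|\zeta\|_{H^2}$; and the multiplier term $(\nabla\Be_r,\boldsymbol{\Psi}-\boldsymbol{\Psi}_h)$ through the Ritz orthogonality $(\nabla\Be_r,\nabla S_h^k)_\Omega=0$, $\nabla\cdot\boldsymbol{\Psi}=0$ and the discrete Helmholtz decomposition of $\boldsymbol{\Psi}_h$. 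Since the error norms are $O(h^l+h^k)$ (the worst being $\|\Be_b\|_{H(\mathrm{curl})}$), the $\mathcal L$-term is $\le C(h^{l+1}+h^{k+1})(\|\Btheta\|_{L^2}+\|\boldsymbol{\chi}\|_{H^1})$. The quadratic remainder $\mathcal R((\Be_u,\Be_b),(\boldsymbol{\Phi}_h,\boldsymbol{\Psi}_h))$ is of order $O(h^{2l}+h^{2k})$, plus — for the $c_1(\Be_b;\Be_u,\boldsymbol{\Psi}_h)$ part, after splitting off $\boldsymbol{\Psi}_h-\boldsymbol{\Psi}$ and using $\|\Be_u\|_{L^\infty(\Omega)}\le C$ (from \refe{bound}) — a term of order $O(h^{l+1}+h^{k+1})$; hence it too is $O(h^{l+1}+h^{k+1})(\|\Btheta\|_{L^2}+\|\boldsymbol{\chi}\|_{H^1})$, since $l\ge2$, $k\ge1$. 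Collecting these and taking suprema ($\boldsymbol{\chi}=0$ over $\Btheta\in[L^2(\Omega)]^d$, then $\Btheta=0$ over $\boldsymbol{\chi}\in[H^1_0(\Omega)]^d$) yields $\|\Be_u\|_{L^2(\Omega)}+\|\Be_b\|_{H^{-1}(\Omega)}\le C(h^{l+1}+h^{k+1})$, which is \refe{main-2}. The hard part will be the quantitative adjoint regularity on the merely convex, nonsmooth polytope $\Omega$ — especially the $[H^1]^d$-regularity of $\boldsymbol{\Psi}$ and of $\nabla\times\boldsymbol{\Psi}$ in the $\mathrm{curl}\,\mathrm{curl}$ block — and the bookkeeping of the pressure and multiplier cross-terms; I also expect the fact that $\zeta$ reaches $H^2$ only when the datum $\boldsymbol{\chi}$ lies in $H^1$ (so that for $\boldsymbol{\chi}$ merely in $L^2$ the term $(\Be_b,\nabla(\zeta-\zeta_h))$ loses its extra power of $h$) to be exactly what forces the magnetic error to be measured in $H^{-1}(\Omega)$ rather than $L^2(\Omega)$.
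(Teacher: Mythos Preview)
Your Aubin--Nitsche strategy with a fully coupled linearized adjoint is sound and would succeed, but it is considerably more elaborate than what the paper actually does, and it misses the main structural point of Theorem~\ref{thm_main_result}.

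For the velocity estimate $\|\Bu-\Bu_h\|_{L^2(\Omega)}$, the paper uses \emph{no duality at all}. The error splitting $\Bu-\Bu_h=\xi_{\Bu}+e_{\Bu}$ together with the bound \refe{3.2-last}, namely $\|e_{\Bu}\|_{H^1(\Omega)}\le C(h^{k+1}+h^{l+1})$ (already established in the proof of Theorem~\ref{thm_main_result}), and the standard Stokes-projection estimate $\|\xi_{\Bu}\|_{L^2(\Omega)}\le Ch^{l+1}$ immediately give $\|\Bu-\Bu_h\|_{L^2(\Omega)}\le C(h^{l+1}+h^{k+1})$. In other words, the whole purpose of the modified Maxwell projection is that $e_{\Bu}$ is \emph{already} one order better in $H^1$ than $\Bb-\Bb_h$ is in $H(\mathrm{curl})$, so no further lifting is needed for the velocity. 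Your coupled adjoint recovers this, but at the cost of proving $H^2\times H^1$ Oseen regularity that is never invoked in the paper.

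For $\|\Bb-\Bb_h\|_{H^{-1}(\Omega)}$ the paper again avoids a coupled adjoint: it reuses the \emph{magnetic-only} dual problem \refe{dual_problem_eqs} (with solution $(\Bz,\phi)$) and the regularity already proved in Theorem~\ref{thm_dual_problem}. Writing $(\Bb-\Bb_h,\Btheta)_\Omega$ via \refe{dual_problem_eqs} and inserting the discrete error equation \refe{err_eq2_group1} at the test function $\Bz_h$ produces, besides the usual $a_m$, $c_1(\,\cdot\,;\Bu,\cdot)$, $\nabla r$, $\nabla\phi$ differences, a single extra coupling term $c_1(\Bb_h;\Bu-\Bu_h,\Bz_h)$; splitting $\Bz_h=\Bz-(\Bz-\Bz_h)$ and using $\|\Bu-\Bu_h\|_{L^2(\Omega)}$ (just obtained) together with $\|\nabla\times\Bz\|_{H^1(\Omega)}\le C\|\Btheta\|_{H^1(\Omega)}$ disposes of it. Thus the only dual regularity needed is that of \refe{dual_problem_eqs}, not of a four-field coupled system. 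Your approach buys generality (it would work even without the modified projection and the sharp $H^1$ bound on $e_{\Bu}$), whereas the paper's approach buys simplicity by cashing in the work already done in Theorems~\ref{thm_dual_problem}, \ref{thm_Maxwell_projection} and~\ref{thm_main_result}.
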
 
\vskip0.1in

{\em Remarks. } For the Taylor-Hood/lowest-order 
N\'ed\'elec's edge element of the first type,  $l=2$ and $k=1$. From the above theorem, 
one can see that the frequently-used  mixed method 
provides the second-order accuracy for the numerical velocity, while only the first-order accuracy 
was presented in previous analyses.  For the MINI/lowest-order 
N\'ed\'elec's edge element of the first type,  $l=k=1$ and the optimal error estimate of the second-order in $L^2$-norm is shown in \refe{main-2}.  
For simplicity, hereafter we denote by $C_K$ a generic positive constant which 
depends upon $K$.

\section{Analysis} 
Before proving our main results, we present a modified Maxwell projection in the following subsection, which 
plays a key role in the proof of Theorem \ref{thm_main_result}. 

\subsection{Projections} 
Let $(\tilde{\Bu}_{h}, \tilde{p}_{h}) \in \BV_{h}^{l} \times Q_{h}^{l}$ be the standard Stokes projection of $(\Bu, p)$ defined by 
\begin{subequations}
\label{Stokes_projection}
\begin{align}
\label{Stokes_projection_eq1}
a_{s}(\Bu - \tilde{\Bu}_{h}, \Bv_{h}) - (p - \tilde{p}_{h}, \nabla\cdot \Bv_{h})_{\Omega} = & 0, 
\quad \forall \Bv_{h} \in \BV_{h}^{l} \\
\label{Stokes_projection_eq2}
(\nabla\cdot (\Bu - \tilde{\Bu}_{h}), q_{h})_{\Omega} = & 0, \quad \forall q_{h} \in Q_{h}^{l}. 
\end{align}
\end{subequations}
Let $(\tilde{\Bb}_{h}, \tilde{r}_{h}) \in \BC_{h}^{k} \times S_{h}^{k}$ be the modified Maxwell projection of $(\Bb, r)$ defined by 
\begin{subequations}
\label{Maxwell_projection}
\begin{align}
\label{Maxwell_projection_eq1}
a_{m} (\Bb - \tilde{\Bb}_{h}, \Bc_{h}) + c_{1}(\Bb - \tilde{\Bb}_{h}; \Bu, \Bc_{h})
 - (\nabla (r - \tilde{r}_{h}), \Bc_{h})_{\Omega} = & 0, 
\quad \forall \Bc_{h} \in \BC_{h}^{k}, \\
\label{Maxwell_projection_eq2}
(\Bb - \tilde{\Bb}_{h}, \nabla s_{h})_{\Omega} = & 0, \quad \forall s_{h} \in S_{h}^{k}.
\end{align}
\end{subequations}

With the Stokes projection (\ref{Stokes_projection}) and the modified Maxwell projection (\ref{Maxwell_projection}), 
we define an error splitting by 
\begin{subequations}
\label{err_splits}
\begin{align}
\label{err_split1}
\Bu - \Bu_{h} = & (\Bu - \tilde{\Bu}_{h}) + (\tilde{\Bu}_{h} - \Bu_{h}) := \xi_{\Bu} + e_{\Bu},\\  
\label{err_split2}
\Bb - \Bb_{h} = & (\Bb - \tilde{\Bb}_{h}) + (\tilde{\Bb}_{h} - \Bb_{h}) := \xi_{\Bb} + e_{\Bb}, \\
\label{err_split3}
p - p_{h} = & (p - \tilde{p}_{h}) + (\tilde{p}_{h} - p_{h}) := \xi_{p} + e_{p},\\ 
\label{err_split4} 
r - r_{h} = & (r - \tilde{r}_{h}) + (\tilde{r}_{h} - r_{h}) := \xi_{r} + e_{r}.
\end{align}
\end{subequations}

Moreover, 
we denote by $(\widehat{\Bb}_{h}, \widehat{r}_{h}) \in \BC_{h}^{k} \times S_{h}^{k}$ 
 the standard Maxwell 
projection defined by 
\begin{subequations}
\label{maxwell_project_standard}
\begin{align}
a_{m} (\Bb - \widehat{\Bb}_{h}, \Bc_{h})  - (\nabla (r - \widehat{r}_{h}), \Bc_{h})_{\Omega} = & 0, 
\quad \forall \Bc_{h} \in \BC_{h}^{k}, \\
(\Bb - \widehat{\Bb}_{h}, \nabla s_{h})_{\Omega} = & 0, \quad \forall s_{h} \in S_{h}^{k}.
\end{align}
\end{subequations} 

By classic finite element theory, we have the error estimates
\begin{align}
\label{s-proj-error} 
  \| \xi_{\Bu} \|_{L^{2}(\Omega)}  + h (\| \nabla \xi_{\Bu} \|_{L^{2}(\Omega)}  
+ \| \xi_{p} \|_{L^{2}(\Omega)}) 
 \leq  C h^{l+1} ( \| \Bu \|_{H^{l+1}(\Omega)} + \| p \|_{H^{l}(\Omega)} )
\end{align} 
 for the Stokes projection in \refe{Stokes_projection} when $\Omega$ is convex and 
\begin{align}
 \Vert \nabla \times (\widehat{\Bb}_{h} - \Bb) \Vert_{L^{2}(\Omega)} 
& +  \Vert \widehat{\Bb}_{h} - \Bb \Vert_{L^{2}(\Omega)} 
+ \| \nabla (\widehat{r}_{h} - r) \|_{L^2(\Omega)}  \nn \\ 
& \leq C h^{\min (k, s)} \left(\Vert \Bb\Vert_{H^{s}(\Omega)} + \Vert \nabla \times \Bb \Vert_{H^{s}(\Omega)} 
+ \Vert r\Vert_{H^{s+1}(\Omega)} \right) 
\label{M-projection-error} 
\end{align}
for the Maxwell projection in (\ref{maxwell_project_standard}). 
Here $s > 0$.

In order to provide the error estimates for the modified Maxwell projection, we consider 
$(\Bz, \phi) \in H_{0}(\text{curl}, \Omega) \times H_{0}^{1}(\Omega)$ satisfying 
\begin{subequations}
\label{dual_problem_eqs}
\begin{align}
\label{dual_problem_eq1}
S R_{m}^{-1} \nabla \times (\nabla \times \Bz) + S \Bu \times (\nabla \times \Bz) - \nabla \phi = & \Btheta, \\
\label{dual_problem_eq2}
\nabla \cdot \Bz = & 0,
\end{align}
\end{subequations}
where $\Btheta \in [ L^{2}(\Omega) ]^d$. 
The well-posedness of the above system is presented in the following theorem.  

\begin{theorem}
\label{thm_dual_problem} Suppose that (\ref{cond-1}) holds. 
Then the  system (\ref{dual_problem_eqs}) has a unique solution.  
If we further assume $\Omega$ is a convex polygon or polyhedra, $\Bu \in [L^{ \infty}(\Omega)\cap W^{1,3}(\Omega)]^d$
 and $\Btheta \in H(\text{div}, \Omega)$, then 
\begin{align}
\label{dual_problem_estimate}
\Vert \Bz\Vert_{H^{1}(\Omega)} + \Vert \nabla \times \Bz \Vert_{H^{1}(\Omega)} + \Vert \phi \Vert_{H^{2}(\Omega)} 
\leq C_{3} \Vert \Btheta \Vert_{H(\text{div},\Omega)},
\end{align}
where $C_3$ is a positive constant depending upon the physical parameters 
$S, R_m$, the domain $\Omega$ and the constant $K$ introduced in (\ref{assump_K}). 
\end{theorem}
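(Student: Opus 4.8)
The plan is to treat \eqref{dual_problem_eqs} as a linear mixed problem for the pair $(\Bz,\phi)$ and then bootstrap its regularity on the convex domain. Testing \eqref{dual_problem_eq1} against $\Bc\in H_0(\text{curl},\Omega)$ and \eqref{dual_problem_eq2} against $s\in H_0^1(\Omega)$, and using the triple-product identity $S(\Bu\times(\nabla\times\Bz),\Bc)_\Omega=c_1(\Bc;\Bu,\Bz)$, the weak formulation is
\begin{align*}
a_m(\Bz,\Bc)+c_1(\Bc;\Bu,\Bz)-(\nabla\phi,\Bc)_\Omega&=(\Btheta,\Bc)_\Omega,\qquad\forall\,\Bc\in H_0(\text{curl},\Omega),\\
(\Bz,\nabla s)_\Omega&=0,\qquad\forall\,s\in H_0^1(\Omega).
\end{align*}
Existence and uniqueness follow by the same Brezzi-type argument used for the Maxwell block in \cite{Shoetzau2004,ZhangHeYang}: the inf--sup condition for the multiplier $\phi$ is the standard gradient one (take $\Bc=\nabla\phi$), while on the divergence-free subspace $\BX$ one has $a_m(\Bz,\Bz)=SR_m^{-1}\|\nabla\times\Bz\|_{L^2(\Omega)}^2\ge SR_m^{-1}\lambda_0^2\|\Bz\|_{H(\text{curl},\Omega)}^2$ by Lemma \ref{lemma_poincare_embedding} and $|c_1(\Bz;\Bu,\Bz)|\le S\lambda_1\lambda_2\|\nabla\Bu\|_{L^2(\Omega)}\|\Bz\|_{H(\text{curl},\Omega)}^2$ by \eqref{couple_terms_ineq2}; condition \eqref{cond-1} together with the bound \eqref{ub-bound} on $\|\nabla\Bu\|_{L^2(\Omega)}$ makes the second term a small perturbation of the first, so the form is coercive on $\BX$. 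This gives a unique $(\Bz,\phi)$ with $\|\Bz\|_{H(\text{curl},\Omega)}+\|\nabla\phi\|_{L^2(\Omega)}\le C\|\Btheta\|_{L^2(\Omega)}$, proving the first assertion.

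For the regularity estimate I would first isolate the equation for $\phi$: choosing $\Bc=\nabla s$ above and using $a_m(\Bz,\nabla s)=0$ and $S(\Bu\times(\nabla\times\Bz),\nabla s)_\Omega=c_1(\nabla s;\Bu,\Bz)$ shows $(\nabla\phi,\nabla s)_\Omega=(S\Bu\times(\nabla\times\Bz)-\Btheta,\nabla s)_\Omega$ for all $s\in H_0^1(\Omega)$, i.e.\ $\phi\in H_0^1(\Omega)$ is the weak solution of $-\Delta\phi=-\nabla\cdot\bigl(S\Bu\times(\nabla\times\Bz)-\Btheta\bigr)$. Setting $\Bh:=\Btheta-S\Bu\times(\nabla\times\Bz)+\nabla\phi$, the strong form of \eqref{dual_problem_eq1} reads $SR_m^{-1}\nabla\times(\nabla\times\Bz)=\Bh$, and since $\Bu\in[L^\infty(\Omega)]^d$ while $\nabla\times\Bz,\nabla\phi\in[L^2(\Omega)]^d$ we have $\Bh\in[L^2(\Omega)]^d$ with $\|\Bh\|_{L^2(\Omega)}\le C_K\|\Btheta\|_{L^2(\Omega)}$. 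The three terms of \eqref{dual_problem_estimate} now come out in order. (i)~Since $\Bz\in\BX$, the $H^1$-regularity of divergence-free $H_0(\text{curl})$ fields on a convex domain (see \cite{Amrouche-BDG}) gives $\Bz\in[H^1(\Omega)]^d$ with $\|\Bz\|_{H^1(\Omega)}\le C\|\nabla\times\Bz\|_{L^2(\Omega)}\le C\|\Btheta\|_{L^2(\Omega)}$. (ii)~The field $\Bw:=\nabla\times\Bz$ satisfies $\nabla\cdot\Bw=0$, $\Bn\cdot\Bw=0$ on $\partial\Omega$ (because $\Bn\times\Bz=\boldsymbol{0}$), and $\nabla\times\Bw=R_mS^{-1}\Bh\in[L^2(\Omega)]^d$; by the corresponding convex-domain regularity (in two dimensions, where $\nabla\times\Bz$ is scalar-valued, the equation gives $\nabla(\nabla\times\Bz)\in[L^2(\Omega)]^2$ directly) we obtain $\nabla\times\Bz\in[H^1(\Omega)]^d$ with $\|\nabla\times\Bz\|_{H^1(\Omega)}\le C\bigl(\|\Bh\|_{L^2(\Omega)}+\|\Bw\|_{L^2(\Omega)}\bigr)\le C_K\|\Btheta\|_{L^2(\Omega)}$. (iii)~With $\nabla\times\Bz\in H^1(\Omega)\hookrightarrow[L^6(\Omega)]^d$ and $\nabla\times(\nabla\times\Bz)\in[L^2(\Omega)]^d$, the product rule together with $\Bu\in[L^\infty(\Omega)\cap W^{1,3}(\Omega)]^d$ gives $\nabla\cdot\bigl(\Bu\times(\nabla\times\Bz)\bigr)\in L^2(\Omega)$; hence the right-hand side of the Poisson problem for $\phi$ lies in $L^2(\Omega)$ with norm $\le C_K\|\Btheta\|_{H(\text{div},\Omega)}$, and $H^2$-elliptic regularity on the convex domain yields $\phi\in H^2(\Omega)$ with $\|\phi\|_{H^2(\Omega)}\le C_K\|\Btheta\|_{H(\text{div},\Omega)}$. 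Adding the bounds from (i)--(iii) gives \eqref{dual_problem_estimate}.

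The hard part will be the kernel coercivity in the first step: because $c_1(\Bz;\Bu,\Bz)$ carries no sign, this is exactly where condition \eqref{cond-1} is used, entering through the a priori bound \eqref{ub-bound} on $\|\nabla\Bu\|_{L^2(\Omega)}$. Beyond that, the argument is a bootstrap whose order must be respected: only $\nabla\phi\in[L^2(\Omega)]^d$ --- not yet $\phi\in H^2(\Omega)$ --- is used to bound $\Bh$ and hence $\nabla\times\Bz$ in $H^1$, and that $H^1$-bound, together with the $L^\infty\cap W^{1,3}$ regularity of $\Bu$ (controlled by $K$), is what finally upgrades $\phi$ to $H^2$, so there is no circularity.
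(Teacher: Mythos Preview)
Your proposal is correct and follows essentially the same route as the paper: an energy/coercivity argument on $\BX$ using condition \eqref{cond-1} for well-posedness, followed by the same convex-domain bootstrap (first $\Bz\in H^1$ from $\nabla\cdot\Bz=0$, then $\nabla\times\Bz\in H^1$ from $\nabla\times(\nabla\times\Bz)\in L^2$, and finally $\phi\in H^2$ from the Poisson equation obtained by taking the divergence of \eqref{dual_problem_eq1}). You are in fact somewhat more explicit than the paper in two places---framing existence via a Brezzi argument rather than only checking uniqueness, and spelling out the boundary condition $\Bn\cdot(\nabla\times\Bz)=0$ needed for the $H^1$-regularity of $\nabla\times\Bz$---but the underlying ideas and their ordering are the same.
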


\begin{proof}
To show the uniqueness of the solution of the system (\ref{dual_problem_eqs}), 
we only consider the corresponding homogeneous system with 
$\Btheta = \boldsymbol{0}$. 
By standard energy argument, we have 
\begin{align*}
S R_{m}^{-1} \Vert \nabla \times \Bz \Vert_{L^{2}(\Omega)}^{2} + S (\Bu \times (\nabla \times \Bz), \Bz)_{\Omega} = 0 \, . 
\end{align*}
By noting (\ref{dual_problem_eq2}), we see that $\Bz \in \BX$. 
According to (\ref{poincare_embedding_eq2}) and (\ref{poincare_embedding_eq3}), we further have 
\begin{align*}
R_{m}^{-1} \Vert \nabla \times \Bz \Vert_{L^{2}(\Omega)}^{2} \leq  \lambda_{1} \lambda_{2} 
\Vert  \nabla \Bu \Vert_{L^{2}(\Omega)} \Vert \nabla \times \Bz \Vert_{L^{2}(\Omega)}^{2}
\end{align*}
which with the condition (\ref{cond-1}) shows that 
$\Vert \nabla \times \Bz \Vert_{L^2} =0$ and by noting $\Bz \in \BX$,  
 $\Bz = \boldsymbol{0}$ in $\Omega$. 
By (\ref{dual_problem_eq1}) and the assumption $\Btheta = \boldsymbol{0}$, 
we obtain $\phi = 0$ in $\Omega$. 
So the system (\ref{dual_problem_eqs}) has a unique solution.

Again by standard energy argument, we have 
\begin{align*}
S R_{m}^{-1} \Vert \nabla \times \Bz \Vert_{L^{2}(\Omega)}^{2} + S \left(\Bu \times (\nabla \times \Bz), \Bz \right)_{\Omega} 
= (\Btheta, \Bz)_{\Omega}. 
\end{align*}
By (\ref{poincare_embedding_eq2}, \ref{poincare_embedding_eq3}, \ref{cond-1}) and noting 
the fact that $\Bz \in \BX$, we see that 
\begin{align*}
\frac{1}{2} R_{m}^{-1} \Vert \nabla \times \Bz \Vert_{L^{2}(\Omega)}^{2} \leq S^{-1}\Vert \Btheta\Vert_{L^{2}(\Omega)} 
\Vert \Bz\Vert_{L^{2}(\Omega)} \leq S^{-1} \lambda_{0}^{-1} \Vert \Btheta\Vert_{L^{2}(\Omega)} 
\Vert \nabla \times \Bz\Vert_{L^{2}(\Omega)}  
\end{align*} 
where we have noted  (\ref{poincare_embedding_eq1}). It follows that 
\begin{align}
\label{dual_problem_bound1}
\Vert \Bz \Vert_{H(\text{curl}, \Omega)} \leq C \Vert \Btheta\Vert_{L^{2}(\Omega)}
\end{align}
and therefore, 
\begin{align}
\label{dual_problem_bound2}
\Vert \Bu \times (\nabla \times \Bz)\Vert_{L^{2}(\Omega)} \leq \Vert \Bu\Vert_{L^{\infty}(\Omega)} 
\Vert \nabla \times \Bz \Vert_{L^{2}(\Omega)} \leq C \Vert \Btheta\Vert_{L^{2}(\Omega)}.
\end{align} 

When $\Omega$ is a convex polygon (or polyhedra), we have 
$$ 
\| \Bz \|_{H^1(\Omega)} \le C (\| \nabla \times \Bz \|_{L^2(\Omega)} 
+ \| \nabla \cdot \Bz \|_{L^2(\Omega)} ) \, . 
$$ 
From the system (\ref{dual_problem_eqs}), (\ref{dual_problem_bound2}) and 
the  fact that $\phi \in H_{0}^{1}(\Omega)$, we can see that 
\begin{align}
\label{dual_problem_bound3}
\Vert \Bz\Vert_{H^{1}(\Omega)} + \Vert \nabla \times \Bz \Vert_{H^{1}(\Omega)} + \Vert \phi \Vert_{H^{1}(\Omega)}  
 \leq C \Vert \Btheta\Vert_{L^{2}(\Omega)}
\end{align}
and then, 
\begin{align}
\label{dual_problem_bound4} 
\Vert \nabla\cdot \left( \Bu \times (\nabla \times \Bz) \right) \Vert_{L^{2}(\Omega)} 
\leq C \left( \Vert \nabla \Bu \Vert_{L^{3}(\Omega)} + \Vert \Bu\Vert_{L^{\infty}(\Omega)} \right) 
\Vert  \nabla \times \Bz \Vert_{H^{1}(\Omega)} \leq C \Vert \Btheta \Vert_{L^{2}(\Omega)}. 
\end{align}
Moreover, by taking divergence on the both sides of (\ref{dual_problem_eq1}), we get the equation  
\begin{align*}
- \Delta \phi = \nabla \cdot \Btheta  - S  \nabla\cdot \left( \Bu \times (\nabla \times \Bz) \right).
\end{align*}
By noting (\ref{dual_problem_bound4}) and the assumption that $\Omega$ is a convex polyhedral,  
\begin{align}
\label{dual_problem_bound5}
\Vert  \phi \Vert_{H^{2}(\Omega)} \leq C \Vert \Btheta\Vert_{H(\text{div}, \Omega)}. 
\end{align}
(\ref{dual_problem_estimate}) follows from (\ref{dual_problem_bound3}) and (\ref{dual_problem_bound5}) and the proof is complete. 
\end{proof}
\vskip0.1in 

Now we present the error estimates of the modified Maxwell projection below. 

\begin{theorem}
\label{thm_Maxwell_projection}
We assume that $\Bu \in [H^{2}(\Omega)]^d$ 
and the condition (\ref{cond-2}) holds. 
Then the modified Maxwell projection (\ref{Maxwell_projection}) is well defined for 
any $(\Bb, r) \in \BX \times H_{0}^{1}(\Omega)$ and for any $s>0$, 
\begin{subequations}
\label{Maxwell_projection_props}
\begin{align}
\label{Maxwell_projection_prop1}
 \Vert \xi_{\Bb}\Vert_{H(\text{curl},\Omega)} + \Vert \nabla \xi_{r}\Vert_{L^{2}(\Omega)}  
\leq C_4 h^{\min (k, s)}  \, . 
\end{align} 
If we further assume $\Omega$ is a convex polygon or polyhedra, then 
\begin{align} 
\label{Maxwell_projection_prop2}
 & \Vert \xi_{\Bb}\Vert_{L^{3}(\Omega) } \le 
  C_4 h^{\min (k, s)} \\ 
\label{Maxwell_projection_prop3}
& \Vert \xi_{\Bb}\Vert_{H^{-1}(\Omega)} + \Vert \nabla \times \xi_{\Bb}\Vert_{H^{-1}(\Omega)} 
\leq C_{4} h^{k+1}.  
\end{align}
\end{subequations}
Here $C_4$ is a positive constant depending upon the physical parameters 
$S, R_m$, the domain $\Omega$ and the constant 
$K$ introduced in (\ref{assump_K}).
\end{theorem}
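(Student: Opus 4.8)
The plan is to compare the modified Maxwell projection (\ref{Maxwell_projection}) with the standard one (\ref{maxwell_project_standard}), whose error is already controlled by (\ref{M-projection-error}), and to get the sharp negative-norm bound (\ref{Maxwell_projection_prop3}) by a duality argument built on Theorem~\ref{thm_dual_problem}. First I would settle well-posedness: (\ref{Maxwell_projection}) is a square linear system, so it is enough to rule out nontrivial homogeneous solutions. A homogeneous solution has $\tilde{\Bb}_h\in\BX_h$ by (\ref{Maxwell_projection_eq2}); testing (\ref{Maxwell_projection_eq1}) with $\Bc_h=\nabla\tilde{r}_h\in\BC_h^k$ and using $a_m(\tilde{\Bb}_h,\nabla s_h)=0$ and $c_1(\tilde{\Bb}_h;\Bu,\nabla s_h)=-S(\Bu\times\tilde{\Bb}_h,\nabla\times\nabla s_h)=0$ forces $\tilde{r}_h=0$; then testing with $\Bc_h=\tilde{\Bb}_h\in\BX_h$, using $|c_1(\Bc_h;\Bu,\Bc_h)|\le S\lambda_1\lambda_2^*\|\nabla\Bu\|_{L^2(\Omega)}\|\nabla\times\Bc_h\|_{L^2(\Omega)}^2$ (Lemma~\ref{lemma_discrete_curl_embedding}) and the fact that (\ref{cond-2}) together with the a priori bound on $\|\Bu\|$ makes $a_m(\cdot,\cdot)+c_1(\cdot;\Bu,\cdot)$ coercive on $\BX_h$, we get $\tilde{\Bb}_h=0$; the inf-sup condition for the multiplier is immediate since $\nabla S_h^k\subset\BC_h^k$. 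I would then record that the modification does not move the multiplier: subtracting (\ref{maxwell_project_standard}) from (\ref{Maxwell_projection}) gives, for all $\Bc_h\in\BC_h^k$,
\[ a_m(\widehat{\Bb}_h-\tilde{\Bb}_h,\Bc_h)-(\nabla(\widehat{r}_h-\tilde{r}_h),\Bc_h)_\Omega=-c_1(\xi_{\Bb};\Bu,\Bc_h), \]
and since $\widehat{\Bb}_h,\tilde{\Bb}_h\in\BX_h$ the choice $\Bc_h=\nabla(\widehat{r}_h-\tilde{r}_h)$ leaves only $-\|\nabla(\widehat{r}_h-\tilde{r}_h)\|_{L^2(\Omega)}^2$ on the left, while the right side equals $S(\Bu\times\xi_{\Bb},\nabla\times\nabla(\widehat{r}_h-\tilde{r}_h))_\Omega=0$; hence $\tilde{r}_h=\widehat{r}_h$, so $\xi_r=r-\widehat{r}_h$ and the bound on $\|\nabla\xi_r\|_{L^2(\Omega)}$ in (\ref{Maxwell_projection_prop1}) is exactly (\ref{M-projection-error}).

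For the rest of (\ref{Maxwell_projection_prop1}) and for (\ref{Maxwell_projection_prop2}) I would write $e:=\widehat{\Bb}_h-\tilde{\Bb}_h\in\BX_h$, so $\xi_{\Bb}=(\Bb-\widehat{\Bb}_h)+e$. Testing the displayed identity with $\Bc_h=e$ and using $\widehat{r}_h=\tilde{r}_h$, $e\in\BX_h$ gives $a_m(e,e)+c_1(e;\Bu,e)=-c_1(\Bb-\widehat{\Bb}_h;\Bu,e)=S(\Bu\times(\Bb-\widehat{\Bb}_h),\nabla\times e)_\Omega$; coercivity of the left side on $\BX_h$, the bound $S\|\Bu\|_{L^\infty(\Omega)}\|\Bb-\widehat{\Bb}_h\|_{L^2(\Omega)}\|\nabla\times e\|_{L^2(\Omega)}$ on the right, (\ref{M-projection-error}), and $\Bu\in[H^2(\Omega)]^d\hookrightarrow[L^\infty(\Omega)]^d$ give $\|e\|_{H(\text{curl},\Omega)}\le Ch^{\min(k,s)}$; adding (\ref{M-projection-error}) for $\Bb-\widehat{\Bb}_h$ completes (\ref{Maxwell_projection_prop1}). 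For (\ref{Maxwell_projection_prop2}) I would estimate $\|e\|_{L^3(\Omega)}\le\lambda_2^*\|\nabla\times e\|_{L^2(\Omega)}$ (Lemma~\ref{lemma_discrete_curl_embedding}) and bound $\|\Bb-\widehat{\Bb}_h\|_{L^3(\Omega)}$ by the $L^p$-analogue of (\ref{M-projection-error}), which is routine under the regularity (\ref{assump_K}).

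The heart of the argument is (\ref{Maxwell_projection_prop3}). For $\Btheta\in H(\text{div},\Omega)$, let $(\Bz,\phi)$ solve (\ref{dual_problem_eqs}); Theorem~\ref{thm_dual_problem} gives $\|\Bz\|_{H^1(\Omega)}+\|\nabla\times\Bz\|_{H^1(\Omega)}+\|\phi\|_{H^2(\Omega)}\le C\|\Btheta\|_{H(\text{div},\Omega)}$. Testing the weak form of (\ref{dual_problem_eqs}) with $\xi_{\Bb}$ gives $(\Btheta,\xi_{\Bb})_\Omega=a_m(\xi_{\Bb},\Bz)+c_1(\xi_{\Bb};\Bu,\Bz)-(\nabla\phi,\xi_{\Bb})_\Omega$, and subtracting (\ref{Maxwell_projection_eq1}) with $\Bc_h=\Pi_h\Bz$ (the N\'ed\'elec interpolant onto $\BC_h^k$) while using $\xi_r=r-\widehat{r}_h$ produces
\[ (\Btheta,\xi_{\Bb})_\Omega=a_m(\xi_{\Bb},\Bz-\Pi_h\Bz)+c_1(\xi_{\Bb};\Bu,\Bz-\Pi_h\Bz)+(\nabla\xi_r,\Pi_h\Bz)_\Omega-(\nabla\phi,\xi_{\Bb})_\Omega. \]
I would bound each term by $Ch^{k+1}\|\Btheta\|_{H(\text{div},\Omega)}$: the first two from $\|\nabla\times\xi_{\Bb}\|_{L^2(\Omega)}+\|\xi_{\Bb}\|_{L^2(\Omega)}\le Ch^k$ (by (\ref{Maxwell_projection_prop1}) with $s=k$, legitimate under (\ref{assump_K})) and the commuting-property estimate $\|\nabla\times(\Bz-\Pi_h\Bz)\|_{L^2(\Omega)}\le Ch\|\nabla\times\Bz\|_{H^1(\Omega)}$; the third by first noting $(\nabla\xi_r,\Bz)_\Omega=0$ (since $\nabla\cdot\Bz=0$ and $\xi_r\in H_0^1(\Omega)$), so $(\nabla\xi_r,\Pi_h\Bz)_\Omega=(\nabla\xi_r,\Pi_h\Bz-\Bz)_\Omega\le\|\nabla\xi_r\|_{L^2(\Omega)}\|\Pi_h\Bz-\Bz\|_{L^2(\Omega)}$; the fourth by (\ref{Maxwell_projection_eq2}), which lets us replace $\nabla\phi$ by $\nabla(\phi-I_h\phi)$, with $I_h$ the Lagrange interpolant onto $S_h^k$, and bound by $\|\nabla(\phi-I_h\phi)\|_{L^2(\Omega)}\|\xi_{\Bb}\|_{L^2(\Omega)}$. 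Taking the supremum over $\Btheta\in[H_0^1(\Omega)]^d$ bounds $\|\xi_{\Bb}\|_{H^{-1}(\Omega)}$; for $\|\nabla\times\xi_{\Bb}\|_{H^{-1}(\Omega)}$ I would integrate by parts, legitimate since $\Bn\times\xi_{\Bb}=\boldsymbol{0}$, to write $\langle\nabla\times\xi_{\Bb},\Bw\rangle=(\xi_{\Bb},\nabla\times\Bw)_\Omega$ for $\Bw\in[H_0^1(\Omega)]^d$, and reuse the estimate with $\Btheta=\nabla\times\Bw\in H(\text{div},\Omega)$ (which has $\nabla\cdot\Btheta=0$).

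The step I expect to be the main obstacle is this last one: each of the four terms must genuinely be $O(h^{k+1})$, i.e.\ an $O(h^k)$ projection error must be paired with an $O(h)$ approximation error of the dual solution, which is only possible thanks to the $H^1/H^2$-regularity of $(\Bz,\phi)$ supplied by Theorem~\ref{thm_dual_problem} (hence the convexity of $\Omega$), thanks to the identity $\tilde{r}_h=\widehat{r}_h$, and — decisively for the third term — thanks to the orthogonalities $(\nabla\xi_r,\Bz)_\Omega=0$ and $(\xi_{\Bb},\nabla s_h)_\Omega=0$; without exploiting $(\nabla\xi_r,\Bz)_\Omega=0$ that term is only $O(h^k)$, which is exactly the pollution the paper means to remove. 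A secondary technical point is verifying, carefully from (\ref{cond-2}) and the a priori bounds, the coercivity of $a_m(\cdot,\cdot)+c_1(\cdot;\Bu,\cdot)$ on $\BX_h$ used throughout.
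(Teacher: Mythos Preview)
Your treatment of well-posedness, of (\ref{Maxwell_projection_prop1}) via comparison with the standard Maxwell projection $(\widehat{\Bb}_h,\widehat{r}_h)$ (including the observation $\tilde{r}_h=\widehat{r}_h$), and of (\ref{Maxwell_projection_prop3}) by the duality argument based on Theorem~\ref{thm_dual_problem}, matches the paper's proof essentially step for step. The orthogonalities you single out --- $(\nabla\xi_r,\Bz)_\Omega=0$ from $\nabla\cdot\Bz=0$ and $(\xi_{\Bb},\nabla s_h)_\Omega=0$ from (\ref{Maxwell_projection_eq2}) --- are exactly the ones the paper exploits to make every term in the duality identity $O(h^{k+1})$.

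The genuine gap is in your argument for (\ref{Maxwell_projection_prop2}). You propose to write $\xi_{\Bb}=(\Bb-\widehat{\Bb}_h)+e$ with $e\in\BX_h$, bound $\|e\|_{L^3(\Omega)}$ by Lemma~\ref{lemma_discrete_curl_embedding}, and then invoke ``the $L^p$-analogue of (\ref{M-projection-error}), which is routine'' for $\|\Bb-\widehat{\Bb}_h\|_{L^3(\Omega)}$. That last step is not routine: an $L^3$ error estimate for the saddle-point Maxwell projection (\ref{maxwell_project_standard}) is not available from standard theory, and an inverse-inequality patch of the form $\|I_h\Bb-\widehat{\Bb}_h\|_{L^3(\Omega)}\le Ch^{-d/6}\|I_h\Bb-\widehat{\Bb}_h\|_{L^2(\Omega)}$ loses $d/6$ powers of $h$ and does not recover the optimal rate $h^{\min(k,s)}$. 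The difficulty is that neither $\Bb-\widehat{\Bb}_h$ nor $\Bb-\tilde{\Bb}_h$ lies in $\BX$ or $\BX_h$, so the discrete embedding cannot be applied to them directly.

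The paper handles this with a genuinely different decomposition: it uses a bounded commuting projection $\Pi_h^{\text{curl}}$ with $L^3$ approximation properties, performs a discrete Helmholtz decomposition $\Pi_h^{\text{curl}}\Bb=(\Pi_h^{\text{curl}}\Bb-\nabla\sigma_h)+\nabla\sigma_h$ so that $(\Pi_h^{\text{curl}}\Bb-\nabla\sigma_h)-\tilde{\Bb}_h\in\BX_h$ is controlled by Lemma~\ref{lemma_discrete_curl_embedding}, and then bounds $\|\nabla\sigma_h\|_{L^3(\Omega)}$ by relating $\sigma_h$ to the solution $\sigma$ of a continuous Poisson problem and invoking $W^{1,3}$ regularity of the Dirichlet Laplacian on convex domains together with discrete $W^{1,3}$ stability of the Ritz projection. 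This is where the convexity hypothesis for (\ref{Maxwell_projection_prop2}) actually enters, and it is substantially more than what you have sketched.
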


\begin{proof}
To prove the well-definedness of the modified Maxwell projection (\ref{Maxwell_projection}),  
we only consider the corresponding homogeneous system
\begin{align*}
a_{m} (\tilde{\Bb}_{h}, \Bc_{h}) + c_{1}(\tilde{\Bb}_{h}; \Bu, \Bc_{h})
 - (\nabla \tilde{r}_{h}, \Bc_{h})_{\Omega} = & 0, \\
 (\tilde{\Bb}_{h}, \nabla s_{h})_{\Omega} = & 0, 
\end{align*}
for any $(\Bc_{h}, s_{h}) \in \BC_{h}^{k} \times S_{h}^{k}$. 
Taking $\Bc_{h} = \tilde{\Bb}_{h}$ and $s_{h} = \tilde{r}_{h}$ leads to 
\begin{align*}
a_{m}(\tilde{\Bb}_h, \tilde{\Bb}_h) + c_{1} (\tilde{\Bb}_h; \Bu, \tilde{\Bb}_h) = 0.
\end{align*}
from which, we can see that 
\begin{align*}
R_{m}^{-1} \Vert \nabla \times \tilde{\Bb}_{h} \Vert_{L^{2}(\Omega)} ^{2} 
& \leq \Vert \tilde{\Bb}_{h}\Vert_{L^{3}(\Omega)} \Vert \Bu\Vert_{L^{6}(\Omega)} 
\Vert \nabla \times \tilde{\Bb}_{h} \Vert_{L^{2}(\Omega)} 
\\ 
&   \leq  \lambda_1 \lambda_2^* \Vert \nabla \Bu\Vert_{L^2(\Omega)} 
\Vert \nabla \times \tilde{\Bb}_{h} \Vert_{L^{2}(\Omega)}^2  \, . 
\end{align*}
This further shows that 
$$ 
(1 - \widehat N_2 \eta(\lambda^*_0)) \Vert \nabla \times \tilde{\Bb}_{h} \Vert_{L^{2}(\Omega)}^2 
\leq 0 \, . 
$$ 
By noting the condition (\ref{cond-2}),  
we get  $\tilde{\Bb}_{h} = \boldsymbol{0}$ in $\Omega$. 
It is straightforward to verify $\tilde{r}_{h} = 0$ in $\Omega$. 
Thus the modified Maxwell projection (\ref{Maxwell_projection})
is well defined when the condition (\ref{cond-2}) holds. 

With the standard Maxwell projection (\ref{maxwell_project_standard}), 
we rewrite  the system (\ref{err_eq2_group1}) and (\ref{err_eq4_group1}) into  
\begin{align*}
 & a_{m} (\widehat{\Bb}_{h} - \tilde{\Bb}_{h}, \Bc_{h}) + c_{1}(\widehat{\Bb}_{h} - \tilde{\Bb}_{h}; \Bu, \Bc_{h})
 - (\nabla (\widehat{r}_{h} - \tilde{r}_{h}), \Bc_{h})_{\Omega} \\
& \qquad \qquad =  a_{m} (\widehat{\Bb}_{h} - \Bb, \Bc_{h}) 
 + c_{1}(\widehat{\Bb}_{h} - \Bb; \Bu, \Bc_{h})
 - (\nabla (\widehat{r}_{h} - r), \Bc_{h})_{\Omega} ,  \\
& (\widehat{\Bb}_{h} - \tilde{\Bb}_{h}, \nabla s_{h})_{\Omega} = 0,  
\end{align*}
for any $(\Bc_{h}, s_{h}) \in \BC_{h}^{k} \times S_{h}^{k}$.
By taking $(\Bc_{h}, s_{h}) = (\widehat{\Bb}_{h} - \tilde{\Bb}_{h}, \widehat{r}_{h} - \tilde{r}_{h})$ 
in above two equations and applying (\ref{maxwell_project_standard}), we have 
\begin{align*}
& a_{m}(\widehat{\Bb}_{h} - \tilde{\Bb}_{h}, \widehat{\Bb}_{h} - \tilde{\Bb}_{h}) 
+ c_{1}(\widehat{\Bb}_{h} - \tilde{\Bb}_{h}; \Bu, \widehat{\Bb}_{h} - \tilde{\Bb}_{h}) \\
= &  a_{m}(\widehat{\Bb}_{h} - \Bb, \widehat{\Bb}_{h} - \tilde{\Bb}_{h}) 
+ c_{1}(\widehat{\Bb}_{h} - \Bb; \Bu, \widehat{\Bb}_{h} - \tilde{\Bb}_{h}) 
- (\nabla (\widehat{r}_{h} - r), \widehat{\Bb}_{h} - \tilde{\Bb}_{h})_{\Omega} \\
= & c_{1}(\widehat{\Bb}_{h} - \Bb; \Bu, \widehat{\Bb}_{h} - \tilde{\Bb}_{h}) ,
\end{align*} 
where we have noted that $\widehat{\Bb}_{h} - \tilde{\Bb}_{h} \in \BX_{h}$ (see (\ref{X_h_space})). 
By Lemma~\ref{lemma_poincare_embedding} and Lemma~\ref{lemma_terms_ineqs}, we further see that
\begin{align*}
& \Vert \widehat{\Bb}_{h} - \tilde{\Bb}_{h}  \Vert_{H(\text{curl}, \Omega)} 
\leq C \Vert \Bu \Vert_{L^{\infty}(\Omega)}   \Vert \widehat{\Bb}_{h} - \Bb \Vert_{L^{2}(\Omega)} \\ 
\leq & C h^{\min (k, s)}  \Vert \Bu \Vert_{L^{\infty}(\Omega)} 
\left(\Vert \Bb\Vert_{H^{s}(\Omega)} + \Vert \nabla \times \Bb \Vert_{H^{s}(\Omega)} 
+ \Vert r\Vert_{H^{s+1}(\Omega)} \right) 
\end{align*}
where we have used the approximation error (\ref{M-projection-error}). 
Therefore, 
\begin{align}
\label{maxwell_proj_approx1} 
& \Vert \Bb - \tilde{\Bb}_{h}  \Vert_{H(\text{curl}, \Omega)} \\
\nonumber
\leq & C h^{\min (k, s)} \left( \Vert \Bu \Vert_{L^{\infty}(\Omega)} + 1 \right)
\left(\Vert \Bb\Vert_{H^{s}(\Omega)} + \Vert \nabla \times \Bb \Vert_{H^{s}(\Omega)} 
+ \Vert r\Vert_{H^{s+1}(\Omega)} \right).
\end{align}
Since 
$\nabla \widehat r_h, \nabla \tilde r_h \in \BC_h^k$, taking $\Bc_h = 
\nabla \widehat r_h - \nabla \tilde r_h$ and noting 
\begin{align*}
a_{m} (\widehat{\Bb}_{h} - \tilde{\Bb}_{h},  \nabla (\widehat{r}_{h} - \tilde{r}_{h}) ) 
 & =   c_{1}(\widehat{\Bb}_{h} - \tilde{\Bb}_{h}; \Bu, \nabla (\widehat{r}_{h} - \tilde{r}_{h})) 
\\ 
& = c_{1}(\widehat{\Bb}_{h} - \Bb; \Bu, \nabla (\widehat{r}_{h} - \tilde{r}_{h})) = 0, 
\end{align*}
we get
\begin{align}
\label{maxwell_proj_approx2} 
\widehat{r}_{h} = \tilde{r}_{h}.
\end{align}
(\ref{Maxwell_projection_prop1}) follows (\ref{maxwell_proj_approx1}, \ref{maxwell_proj_approx2})
and the approximation property of the standard Maxwell projection (\ref{M-projection-error}).

Moreover, let $\Pi_{h}^{\text{curl}}$ be the projection $P_{1}$ onto $\BC_{h}^{k}$ in \cite[Section~$5$]{Christiansen11}. 
Then by \cite[Proposition~$5.65$]{Christiansen11}, we have 
\begin{align*}
\Vert \Bb -  \Pi_{h}^{\text{curl}}\Bb \Vert_{L^{3}(\Omega)} \leq & Ch^{\min (k, s)} \Vert \Bb\Vert_{W^{s, 3}(\Omega)}, \\
\Vert \Bb - \Pi_{h}^{\text{curl}}\Bb\Vert_{H(\text{curl}, \Omega)} \leq & C h^{\min (k, s)}\left(\Vert \Bb\Vert_{H^{s}(\Omega)} 
+ \Vert \nabla \times \Bb \Vert_{H^{s}(\Omega)} \right).
\end{align*}
We define $\sigma_{h} \in S_{h}^{k}$ by
\begin{align*}
(\nabla \sigma_{h}, \nabla s_{h})_{\Omega} = (\Pi_{h}^{\text{curl}}\Bb, \nabla s_{h})_{\Omega}, \qquad 
\forall s_{h} \in S_{h}^{k}.
\end{align*}
Since $ \Pi_{h}^{\text{curl}}\Bb - \nabla \sigma_{h}\in \BX_{h}$, by Lemma~\ref{lemma_discrete_curl_embedding} 
 and (\ref{Maxwell_projection_prop1}), 
\begin{align*}
& \Vert (\ \Pi_{h}^{\text{curl}}\Bb - \nabla\sigma_{h}) - \tilde{\Bb}_{h} \Vert_{L^{3}(\Omega)} 
\leq C \Vert  \Pi_{h}^{\text{curl}}\Bb - \tilde{\Bb}_{h}\Vert_{H(\text{curl}, \Omega)} \\
\leq & C h^{\min (k, s)} \left( \Vert \Bu\Vert_{H^{2}(\Omega)} + 1\right) \left( \Vert \Bb\Vert_{H^{s}(\Omega)} 
+ \Vert \nabla \times \Bb\Vert_{H^{s}(\Omega)} + \Vert r\Vert_{H^{s+1}(\Omega)} \right).
\end{align*}
We define by $\sigma$ the solution of 
\begin{align*}
\Delta \sigma = \nabla \cdot (\Bb - \Pi_{h}^{\text{curl}}\Bb) \text{ in } \Omega, \qquad 
\sigma = 0 \text{ on } \partial \Omega.
\end{align*}
Following \cite[Theorem~$0.5$]{JerisonKenig1995} (see also \cite[Corollary~$3.10$]{Dauge92} 
and \cite[Remark~$3.11$]{Dauge92} ),
\begin{align*}
\Vert \sigma\Vert_{W^{1, 3}(\Omega)} \leq C \Vert \Bb - \Pi_{h}^{\text{curl}}\Bb\Vert_{L^{3}(\Omega)} 
\leq C h^{\min (k, s)} \Vert \Bb\Vert_{W^{s,3}(\Omega)}.
\end{align*}
By the definition of $\sigma$ and noting the fact that $\nabla\cdot \Bb = 0$, we see that 
\begin{align*}
(\nabla \sigma_{h}, \nabla s_{h} )_{\Omega} = ( \Pi_{h}^{\text{curl}}\Bb - \Bb, \nabla s_{h})_{\Omega}
= (\nabla \sigma, \nabla s_{h})_{\Omega}, \qquad 
\forall s_{h} \in S_{h}^{k}.  
\end{align*}
By \cite[Theorem~$2$]{Guzman09} with the assumption $\Omega$ being convex and standard interpolation 
argument for bounded linear operator, we have 
\begin{align*}
\Vert \sigma_{h}  \Vert_{W^{1,3}(\Omega)} \leq C \Vert \sigma\Vert_{W^{1,3}(\Omega)} 
\leq C h^{\min (k, s)} \Vert \Bb\Vert_{W^{s,3}(\Omega)}, 
\end{align*}
 and 
\begin{align}
\label{maxwell_proj_approx_L3}
& \Vert \Bb - \tilde{\Bb}_{h}\Vert_{L^{3}(\Omega)} \\
\nonumber
\leq & \Vert  \Bb - \Pi_{h}^{\text{curl}} \Bb\Vert_{L^{3}(\Omega)} + \Vert \nabla \sigma_{h} \Vert_{L^{3}(\Omega)} 
+ \Vert (\Pi_{h}^{\text{curl}}\Bb - \nabla \sigma_{h}) - \tilde{\Bb}_{h} \Vert_{L^{3}(\Omega)} \\
\nonumber 
\leq & C h^{\min (k, s)} \left( \Vert \Bu\Vert_{H^{2}(\Omega)} + 1\right) \left(   \Vert \Bb\Vert_{H^{s}(\Omega)} 
+ \Vert \nabla \times \Bb \Vert_{H^{s}(\Omega)} + \Vert \Bb\Vert_{W^{s,3}(\Omega)} + \Vert r\Vert_{H^{s+1}(\Omega)} \right).
\end{align}
(\ref{Maxwell_projection_prop2}) follows immediately.

We notice that 
\begin{align*}
 \Vert \Bb - \tilde{\Bb}_{h}\Vert_{H^{-1}(\Omega)} 
 = \sup_{\Btheta \in [H_{0}^{1}(\Omega)]^d} \dfrac{(\Bb - \tilde{\Bb}_{h}, \Btheta)_{\Omega} }{\Vert 
 \Btheta\Vert_{H^{1}(\Omega)}}. 
 \end{align*}
 Let $(\Bz, \phi) \in H_{0}(\text{curl}, \Omega) \times H_{0}^{1}(\Omega)$ 
be the solution of the system (\ref{dual_problem_eq1})-(\ref{dual_problem_eq2}). 
Since the condition (\ref{cond-2}) implies the condition (\ref{cond-1}) and  the assumption 
$\Bu \in [H^{2}(\Omega)]^d$ , 
by Theorem~\ref{thm_dual_problem}, the system \refe{dual_problem_eqs} 
is well-posed and  
\begin{align}
\label{dual_regularity_in_proof}
\Vert \Bz\Vert_{H^{1}(\Omega)} + \Vert \nabla \times \Bz \Vert_{H^{1}(\Omega)} + \Vert \phi \Vert_{H^{2}(\Omega)} 
\leq C_{3} \Vert \Btheta \Vert_{H^{1}(\Omega)},
\end{align} 
where the constant $C_{3}$ is introduced in Theorem~\ref{thm_dual_problem}.
Then we have 
\begin{align}
\label{replacement1}
 & (\Bb - \tilde{\Bb}_{h}, \Btheta)_{\Omega} \\
\nonumber 
= & S R_{m}^{-1} (\nabla \times (\Bb - \tilde{\Bb}_{h}), \nabla \times z)_{\Omega} 
 + S (\Bu \times (\nabla \times \Bz), \Bb - \tilde{\Bb}_{h})_{\Omega} - (\Bb - \tilde{\Bb}_{h}, \nabla \phi)_{\Omega} \\ 
\nonumber  
 = & a_{m} (\Bb - \tilde{\Bb}_{h}, \Bz) + c_{1}(\Bb - \tilde{\Bb}_{h}; \Bu, z) - (\Bb - \tilde{\Bb}_{h}, \nabla \phi)_{\Omega} \\ 
\nonumber 
 = & a_{m} (\Bb - \tilde{\Bb}_{h}, \Bz - \Bz_{h}) + c_{1}(\Bb - \tilde{\Bb}_{h}; \Bu, \Bz - \Bz_{h}) 
 + (\nabla (r - \tilde{r}_{h}), \Bz - \Bz_{h})_{\Omega}
\\ 
\nonumber 
&  - (\Bb - \tilde{\Bb}_{h}, \nabla (\phi - \phi_{h}))_{\Omega},  
\end{align}
for any $(\Bz_{h}, \phi_{h}) \in \BC_{h}^{k} \times S_{h}^{k}$. The last equality  
follows the definition of the modified 
Maxwell projection (\ref{Maxwell_projection}) and the fact that $\nabla \cdot \Bz = 0$. 
By Lemma~\ref{lemma_terms_ineqs}, we further have  
\begin{align}
\label{replacement2}
  (\Bb - \tilde{\Bb}_{h}, \Btheta)_{\Omega} 
 \leq & C \big(  (1 + \Vert \Bu\Vert_{L^{\infty}(\Omega})\Vert \Bb - \tilde{\Bb}_{h} \Vert_{H(\text{curl}, \Omega)} 
 \Vert \nabla \times (\Bz - \Bz_{h})  \Vert_{L^{2}(\Omega)}   \\ 
\nonumber 
& \quad  + \Vert \nabla (r - \tilde{r}_{h}) \Vert_{L^{2}(\Omega)} \Vert \Bz - \Bz_{h}\Vert_{L^{2}(\Omega)}  
+ \Vert \Bb - \tilde{\Bb}_{h}\Vert_{L^{2}(\Omega)} \Vert \nabla (\phi - \phi_{h}) \Vert_{L^{2}(\Omega)}  \big).
\end{align}
We choose $\Bz_{h}$ and $\phi_{h}$ to be the best approximations to $z$ and $\phi$
in $\BC_{h}^{k}$ and $S_{h}^{k}$ for $H(\text{curl})$-norm and $H^{1}$-norm, respectively. 
By (\ref{dual_regularity_in_proof}), 
\begin{align}
\label{maxwell_proj_approx3}
 \Vert  \Bb - \tilde{\Bb}_{h} \Vert_{H^{-1}(\Omega)} 
\leq & C C_{3} h \left( \Vert \Bb - \tilde{\Bb}_{h}\Vert_{H(\text{curl}, \Omega)}  
+ \Vert \nabla (r - \tilde{r}_{h}) \Vert_{L^{2}(\Omega)} \right) \\ 
\nonumber
\leq & C C_{3} h^{k+1} \left(\Vert \Bb\Vert_{H^{k}(\Omega)} + \Vert \nabla \times \Bb \Vert_{H^{k}(\Omega)} 
+ \Vert r\Vert_{H^{k+1}(\Omega)} \right).
\end{align}

On the other hand, it is easy to see 
\begin{align}
\label{replacement3}
\Vert \nabla \times (\Bb - \tilde{\Bb}_{h}) \Vert_{H^{-1}(\Omega)} 
= \sup_{\boldsymbol{\eta} \in [H_{0}^{1}(\Omega)]^d} \dfrac{(\nabla \times (\Bb - \tilde{\Bb}_{h}),
 \boldsymbol{\eta})_{\Omega}}{\Vert \boldsymbol{\eta}\Vert_{H^{1}(\Omega)}} 
 =  \sup_{\boldsymbol{\eta} \in [H_{0}^{1}(\Omega)]^d} \dfrac{( \Bb - \tilde{\Bb}_{h},
 \nabla \times \boldsymbol{\eta})_{\Omega}}{\Vert \boldsymbol{\eta}\Vert_{H^{1}(\Omega)}}. 
\end{align}
Then for any $\boldsymbol{\eta} \in [H_{0}^{1}(\Omega)]^{d}$, the same argument obtaining (\ref{replacement1}) implies   
\begin{align*}
(\Bb - \tilde{\Bb}_{h}, \nabla \times \boldsymbol{\eta})_{\Omega} 
 = & a_{m} (\Bb - \tilde{\Bb}_{h}, \Bz - \Bz_{h}) + c_{1}(\Bb - \tilde{\Bb}_{h}; \Bu, \Bz - \Bz_{h}) 
 + (\nabla (r - \tilde{r}_{h}), \Bz - \Bz_{h})_{\Omega} \\ 
&  - (\Bb - \tilde{\Bb}_{h}, \nabla (\phi - \phi_{h}))_{\Omega},  
\qquad \forall (\Bz_{h}, \phi_{h}) \in \BC_{h}^{k} \times S_{h}^{k}.
\end{align*}
Here  $(\Bz, \phi) \in H_{0}(\text{curl}, \Omega) \times H_{0}^{1}(\Omega)$ 
is the solution of the system (\ref{dual_problem_eq1})-(\ref{dual_problem_eq1}) 
with $\Btheta = \nabla \times \boldsymbol{\eta}$. 
By Lemma~\ref{lemma_terms_ineqs}, we have 
\begin{align*}
  (\Bb - \tilde{\Bb}_{h}, \nabla \times \boldsymbol{\eta})_{\Omega} 
 \leq & C \big(  (1 + \Vert \Bu\Vert_{L^{\infty}(\Omega})\Vert \Bb - \tilde{\Bb}_{h} \Vert_{H(\text{curl}, \Omega)} 
 \Vert \nabla \times (\Bz - \Bz_{h})  \Vert_{L^{2}(\Omega)}   \\ 
& \quad  + \Vert \nabla (r - \tilde{r}_{h}) \Vert_{L^{2}(\Omega)} \Vert \Bz - \Bz_{h}\Vert_{L^{2}(\Omega)}  
+ \Vert \Bb - \tilde{\Bb}_{h}\Vert_{L^{2}(\Omega)} \Vert \nabla (\phi - \phi_{h}) \Vert_{L^{2}(\Omega)}  \big).
\end{align*}
Notice that $\Vert \Btheta \Vert_{H(\text{div}, \Omega)} = \Vert \nabla \times \boldsymbol{\eta}\Vert_{L^{2}(\Omega)} 
\leq \Vert \boldsymbol{\eta}\Vert_{H^{1}(\Omega)}$. By Theorem~\ref{thm_dual_problem}, we have 
\begin{align}
\label{dual_regularity_in_proof_2}
\Vert \Bz\Vert_{H^{1}(\Omega)} + \Vert \nabla \times \Bz \Vert_{H^{1}(\Omega)} + \Vert \phi \Vert_{H^{2}(\Omega)} 
\leq C_{3} \Vert \Btheta \Vert_{H(\text{div},\Omega)} 
\leq C_{3}\Vert \boldsymbol{\eta}\Vert_{H^{1}(\Omega)},
\end{align} 
where the constant $C_{3}$ is introduced in Theorem~\ref{thm_dual_problem}.
We choose $\Bz_{h}$ and $\phi_{h}$ to be the best approximations to $z$ and $\phi$
in $\BC_{h}^{k}$ and $S_{h}^{k}$ for $H(\text{curl})$-norm and $H^{1}$-norm, respectively. 
By (\ref{dual_regularity_in_proof_2}), 
\begin{align*}
 (\Bb - \tilde{\Bb}_{h}, \nabla \times \boldsymbol{\eta})_{\Omega} 
\leq & C C_{3} h \left( \Vert \Bb - \tilde{\Bb}_{h}\Vert_{H(\text{curl}, \Omega)}  
+ \Vert \nabla (r - \tilde{r}_{h}) \Vert_{L^{2}(\Omega)} \right) \Vert \boldsymbol{\eta} \Vert_{H^{1}(\Omega)} \\ 
\leq & C C_{3} h^{k+1} \left(\Vert \Bb\Vert_{H^{k}(\Omega)} + \Vert \nabla \times \Bb \Vert_{H^{k}(\Omega)} 
+ \Vert r\Vert_{H^{k+1}(\Omega)} \right) \Vert \boldsymbol{\eta} \Vert_{H^{1}(\Omega)}.
\end{align*}
(\ref{replacement3}) and the last inequality implies 
\begin{align}
\label{maxwell_proj_approx4}
& \Vert \nabla \times ( \Bb - \tilde{\Bb}_{h} ) \Vert_{H^{-1}(\Omega)} \\ 
\nonumber
\leq & C C_{3} h \left( \Vert \Bb - \tilde{\Bb}_{h}\Vert_{H(\text{curl}, \Omega)}  
+ \Vert \nabla (r - \tilde{r}_{h}) \Vert_{L^{2}(\Omega)} \right) \\ 
\nonumber
\leq & C C_{3} h^{k+1} \left(\Vert \Bb\Vert_{H^{k}(\Omega)} + \Vert \nabla \times \Bb \Vert_{H^{k}(\Omega)} 
+ \Vert r\Vert_{H^{k+1}(\Omega)} \right) 
\end{align}
by the same argument with (\ref{dual_regularity_in_proof}) in last paragraph. 
Thus (\ref{Maxwell_projection_prop3}) holds by (\ref{maxwell_proj_approx3}) and (\ref{maxwell_proj_approx4}). 

Now we conclude that the proof is complete.
\end{proof}

\begin{remark} 
If the condition (\ref{cond-2}) holds, we claim that 
\begin{align}
\label{Maxwell_projection_L_infinity_bound}
\Vert \tilde{\Bb}_{h}\Vert_{L^{\infty}(\Omega)} \leq C.
\end{align}
Here $\tilde{\Bb}_{h}$ is defined in (\ref{Maxwell_projection}). 
  
By (\ref{Maxwell_projection_prop2}) and the condition (\ref{cond-2}), we have that 
\begin{align*}
\Vert \Bb - \tilde{\Bb}_{h} \Vert_{L^{3}(\Omega)} \leq Ch.
\end{align*} 
We denote by $\Bb_{h}^{0}$ the standard $L^{2}$-orthogonal projection of $\Bb$ onto $[P_{0}(\Ct_{h})]^{d}$. 
By the condition (\ref{cond-2}) again, we have that 
\begin{align*}
& \Vert \Bb_{h}^{0} \Vert \leq C \Vert \Bb\Vert_{L^{\infty}(\Omega)} 
\leq C \Vert \Bb \Vert_{W^{1, d^{+}}(\Omega)} \leq C,\\
& \Vert \Bb - \Bb_{h}^{0} \Vert_{L^{3}(\Omega)} \leq Ch.
\end{align*}
Then by discrete inverse inequality, we have that 
\begin{align*}
&\Vert \tilde{\Bb}_{h} \Vert_{L^{\infty}(\Omega)} 
\leq \Vert \tilde{\Bb}_{h} - \Bb_{h}^{0} \Vert_{L^{\infty}(\Omega)} + \Vert \Bb_{h}^{0}\Vert_{L^{\infty}(\Omega)} \\
\leq & C h^{-1}\Vert \tilde{\Bb}_{h} - \Bb_{h}^{0} \Vert_{L^{3}(\Omega)} 
+ \Vert \Bb_{h}^{0}\Vert_{L^{\infty}(\Omega)} \leq C .
\end{align*}
Thus (\ref{Maxwell_projection_L_infinity_bound}) holds. 

In fact, by (\ref{Maxwell_projection_prop2},\ref{mfem_conv}) and the same argument above, we have that 
\begin{align}
\label{numerical_b_L_infinite_bound}
\Vert \Bb_{h}\Vert_{L^{\infty}(\Omega)} \leq C,
\end{align}
if the condition (\ref{cond-2}) holds.
\end{remark}

\vskip0.1in 

\subsection{Proof of Theorem~\ref{thm_main_result}}
By Lemma \ref{thm_mfem_wellposedness}, the mixed finite element system (\ref{MFEM_mhd}) admits a unique solution and the boundedness \refe{bound} holds. 

From (\ref{mhd_eqs}) and (\ref{MFEM_mhd}), we can see that the error functions satisfy the following equations 
\begin{subequations}
\label{err_eqs_group1}
\begin{align}
\label{err_eq1_group1}
& a_{s}(\Bu - \Bu_{h}, \Bv_{h}) =  -\left( c_{0}(\Bu; \Bu, \Bv_{h}) - c_{0}(\Bu_{h}; \Bu_{h}, \Bv_{h}) \right) 
 + \left( c_{1}(\Bb; \Bv_{h}, \Bb) - c_{1}(\Bb_{h}; \Bv_{h}, \Bb_{h})  \right)  \\ 
\nonumber 
& \qquad \qquad \qquad \qquad \qquad  - (p - p_{h}, \nabla\cdot \Bv_{h})_{\Omega}, \\
\label{err_eq2_group1}
& a_{m}(\Bb - \Bb_{h}, \Bc_{h}) = - \left( c_{1}(\Bb; \Bu, \Bc_{h}) - c_{1}(\Bb_{h}; \Bu_{h}, \Bc_{h}) \right) 
+ (\nabla (r - r_{h}), \Bc_{h})_{\Omega}, \\
\label{err_eq3_group1}
& (\nabla\cdot (\Bu - \Bu_{h}), q_{h})_{\Omega} = 0, \\
\label{err_eq4_group1}
& (\Bb - \Bb_{h}, \nabla s_{h})_{\Omega} = 0,
\end{align}
\end{subequations}
for any $(\Bv_{h}, \Bc_{h}, q_{h}, s_{h}) \in \BV_{h}^{l}\times \BC_{h}^{k}\times Q_{h}^{l}\times S_{h}^{k}$.

With the splitting (\ref{err_splits}), by taking  $\Bv_{h} = e_{\Bu}$, $\Bc_{h} = e_{\Bb}$, 
$q_{h} = e_{p}$ and $s_{h} = e_{r}$, the error equations (\ref{err_eqs_group1}) reduce to 
\begin{subequations}
\label{err_eqs_group2}
\begin{align}
\label{err_eq1_group2}
& a_{s}(e_{\Bu}, e_{\Bu}) + (e_{p}, \nabla\cdot e_{\Bu})_{\Omega}
=  -\left( c_{0}(\Bu; \Bu, e_{\Bu}) - c_{0}(\Bu_{h}; \Bu_{h}, e_{\Bu}) \right) \\
\nonumber
& \qquad \qquad \qquad \qquad \qquad  \qquad \qquad
+ \left( c_{1}(\Bb; e_{\Bu}, \Bb) - c_{1}(\Bb_{h}; e_{\Bu} ,  \Bb_{h})  \right),  \\ 
\label{err_eq2_group2}
& a_{m}(e_{\Bb}, e_{\Bu}) - (\nabla e_{r}, e_{\Bb})_{\Omega} = 
- \left( c_{1}(\Bb; \Bu, e_{\Bb}) - c_{1}(\Bb_{h}; \Bu_{h}, e_{\Bb}) 
- c_{1}(\xi_{\Bb}; \Bu, e_{\Bb}) \right), \\
\label{err_eq3_group2}
& (\nabla\cdot e_{\Bu} , e_{p})_{\Omega} = 0, \\
\label{err_eq4_group2}
& (e_{\Bb}, \nabla e_{r})_{\Omega} = 0,
\end{align}
\end{subequations}
where we have noted the definition of these two projections (\ref{Stokes_projection}) and (\ref{Maxwell_projection}). 
Notice that if we used the standard Maxwell projection (\ref{maxwell_project_standard}), then the term 
$c_{1}(\xi_{\Bb}; \Bu, e_{\Bb})$ would not appear in the error equation (\ref{err_eq2_group2}).

Summing up the first two equations in (\ref{err_eqs_group2}) leads to  
\begin{align}
& a_{s}(e_{\Bu}, e_{\Bu})  + a_{m}(e_{\Bb}, e_{\Bb}) 
\nn  \\
= & -\left(  c_{0}(\Bu; \Bu, e_{\Bu}) - c_{0}(\Bu_{h}; \Bu_{h}, e_{\Bu}) \right) \nn \\
& \quad + \left[ \left( c_{1}(\Bb; e_{\Bu}, \Bb) - c_{1}(\Bb_{h}; e_{\Bu} ,  \Bb_{h})  \right)
- \left(  c_{1}(\Bb; \Bu, e_{\Bb}) - c_{1}(\Bb_{h}; \Bu_{h}, e_{\Bb}) 
- c_{1}(\xi_{\Bb}; \Bu, e_{\Bb}) \right)  \right] 
\nn  \\
:= &  I_{\Bu} + I_{\Bb}.
\label{as+am} 
\end{align}
By the skew-symmetry of the operator $c_{0}$, Lemma~\ref{lemma_terms_ineqs} 
and Theorem~\ref{thm_mfem_wellposedness}, 
\begin{align}
\label{Ia}
I_{\Bu} = & - \left( c_{0}(\xi_{\Bu}; \Bu, e_{\Bu}) + c_{0}(e_{\Bu}; \Bu, e_{\Bu}) + c_{0}(\Bu_{h}; \xi_{\Bu}, e_{\Bu})\right) \\
\nn 
\leq & \lambda_{1} \lambda_{1}^{*} 
\Vert \nabla \Bu\Vert_{L^2(\Omega)} \Vert \nabla e_{\Bu}\Vert_{L^2(\Omega)}^{2} 
+ 
C \left(  \Vert \Bu\Vert_{W^{1,d^+}(\Omega)} +  \Vert \Bu_{h}\Vert_{W^{1,d+}(\Omega)} \right) 
\Vert \xi_{\Bu}\Vert_{L^2(\Omega)} \Vert e_{\Bu}\Vert_{H^{1}(\Omega)} \\
\nn
\leq & \left(\lambda_{1} \lambda_{1}^{*}  \Vert \nabla \Bu\Vert_{L^2(\Omega)} + \epsilon \right)
\Vert \nabla e_{\Bu}\Vert_{L^2(\Omega)}^{2}  
+ 
C\epsilon^{-1}  \left(  \Vert \Bu\Vert_{W^{1,d^+} (\Omega)} +  \Vert \Bu_{h}\Vert_{W^{1,d^+}(\Omega)} \right)^{2} 
 \Vert \xi_{\Bu}\Vert_{L^2(\Omega)}^2  
  \\ 
\nn
\leq &  \left(\lambda_{1} \lambda_{1}^{*} \Vert \nabla  \Bu\Vert_{L^2(\Omega)} + \epsilon \right)
\Vert \nabla e_{\Bu}\Vert_{L^2(\Omega)}^{2}  
+ C_K \epsilon^{-1} h^{2l+2} 
\end{align} 
we have used \refe{bound} and noted 
$\| \cdot \|_{L^{\infty}(\Omega)} \le C \| \cdot \|_{W^{1,d^+}(\Omega)}$. 
We recall that $d^{+}>d$ is a constant introduced in (\ref{assump_K}). 

On the other hand,  by rearranging terms in $I_{\Bb}$, we have 
\begin{align}
I_{\Bb} =  & \left( c_{1}(\Bb; e_{\Bu}, \Bb) - c_{1}(\Bb_{h}; e_{\Bu} ,  \Bb_{h})  \right)
- \left(  c_{1}(\Bb; \Bu, e_{\Bb}) - c_{1}(\Bb_{h}; \Bu_{h}, e_{\Bb}) - c_{1}(\xi_{\Bb}; \Bu, e_{\Bb}) \right) 
\nn  \\
= & c_{1}(\Bb - \Bb_{h}; e_{\Bu}, \Bb) + c_{1}(\Bb_{h}; e_{\Bu}, \Bb - \Bb_{h}) \nn \\
& \qquad  - c_{1}(\Bb - \Bb_{h}; \Bu, e_{\Bb}) - c_{1}(\Bb_{h}; \Bu - \Bu_{h}, e_{\Bb}) + c_{1}(\xi_{\Bb}; \Bu, e_{\Bb}) 
\nn \\  
= & c_{1}(\Bb - \Bb_{h}; e_{\Bu}, b) + c_{1}(\Bb_{h}; e_{\Bu}, \xi_{\Bb}) 
\nn  \\
& \qquad - c_{1}(\Bb - \Bb_{h}; \Bu, e_{\Bb}) - c_{1}(\Bb_{h}; \xi_{\Bu}, e_{\Bb}) 
+ c_{1}(\xi_{\Bb}; \Bu, e_{\Bb}).  
\label{I_b_init}
\end{align}
Notice that 
\begin{align}
& - c_{1}(\Bb - \Bb_{h}; \Bu, e_{\Bb}) + c_{1}(\xi_{\Bb}; \Bu, e_{\Bb}) \nn \\
= & - c_{1}(\xi_{\Bb}; \Bu, e_{\Bb}) - c_{1}(e_{\Bb}; \Bu, e_{\Bb}) + c_{1}(\xi_{\Bb}; \Bu, e_{\Bb}) 
= - c_{1}(e_{\Bb}; \Bu, e_{\Bb}).
\label{key_identity}
\end{align}
Then by (\ref{I_b_init}) and (\ref{key_identity}), we have   
\begin{align}
I_{\Bb} = & c_{1}(\Bb - \Bb_{h}; e_{\Bu}, \Bb) + c_{1}(\Bb_{h}; e_{\Bu}, \xi_{\Bb}) 
- c_{1}(e_{\Bb}; \Bu, e_{\Bb}) - c_{1}(\Bb_{h}; \xi_{\Bu}, e_{\Bb}) 
\nn \\
= &  c_{1}(\xi_{\Bb}; e_{\Bu}, \Bb)  + c_{1}(\Bb; e_{\Bu}, \xi_{\Bb}) 
+ \big( c_{1}(e_{\Bb}; e_{\Bu}, \Bb)  -c_{1}(e_{\Bb}; e_{\Bu}, \xi_{\Bb}) - c_{1}(\xi_{\Bb}; e_{\Bu}, \xi_{\Bb}) 
 \nn \\ 
& \qquad - c_{1}(e_{\Bb}; \Bu, e_{\Bb}) \big )  - c_{1}(\Bb_{h}; \xi_{\Bu}, e_{\Bb}) . 
\label{I_b} 
\end{align}
We estimate these terms in the right-hand side of the above equation below. 
By the definition of the operator $c_1$, the sum of the first two terms 
in the right-hand side above can be rewritten by 
\begin{align*}
 c_{1}(\xi_{\Bb}; e_{\Bu}, \Bb)  + c_{1}(\Bb; e_{\Bu}, \xi_{\Bb}) 
=  S (\xi_{\Bb},  e_{\Bu} \times (\nabla \times \Bb))_{\Omega} - S (\nabla \times \xi_{\Bb}, e_{\Bu} \times \Bb)_{\Omega}
\end{align*}
which, by Theorem~\ref{thm_Maxwell_projection}  and the fact that $e_{\Bu} \in [H_{0}^{1}(\Omega)]^{3}$, is bounded by  
\begin{align}
& c_{1}(\xi_{\Bb}; e_{\Bu}, \Bb)  + c_{1}(\Bb; e_{\Bu}, \xi_{\Bb}) 
\nn \\
\leq & S \left( \Vert \xi_{\Bb}\Vert_{H^{-1}(\Omega)} \Vert  e_{\Bu} \times (\nabla \times \Bb)\Vert_{H^{1}(\Omega)} 
+ \Vert \nabla \times \xi_{\Bb} \Vert_{H^{-1}(\Omega)} \Vert  e_{\Bu} \times \Bb \Vert_{H^{1}(\Omega)} \right)  
\nn \\
\leq & C \Vert \xi_{\Bb}\Vert_{H^{-1}(\Omega)} \left( \Vert e_{\Bu}\Vert_{L^{6}(\Omega)} 
\Vert \nabla \times \Bb \Vert_{W^{1,3}(\Omega)}  + \Vert \nabla e_{\Bu} \Vert_{L^{2}(\Omega)} 
 \Vert \nabla \times \Bb \Vert_{L^{\infty}(\Omega)}  \right) 
 \nn \\
& \qquad + C \Vert \nabla \times \xi_{\Bb} \Vert_{H^{-1}(\Omega)} 
\left( \Vert e_{\Bu}\Vert_{L^{6}(\Omega)} 
\Vert \Bb \Vert_{W^{1,3}(\Omega)}  + \Vert \nabla e_{\Bu} \Vert_{L^{2}(\Omega)} 
 \Vert \Bb \Vert_{L^{\infty}(\Omega)} \right)
 \nn \\ 
\leq & C \Vert \xi_{\Bb}\Vert_{H^{-1}(\Omega)} \Vert e_{\Bu}\Vert_{H^{1}(\Omega)} 
 \left( \Vert \nabla \times \Bb \Vert_{W^{1,3}(\Omega)}  
 +  \Vert \nabla \times \Bb \Vert_{L^{\infty}(\Omega)}  \right) 
 \nn  \\
& \qquad + C \Vert \nabla \times \xi_{\Bb} \Vert_{H^{-1}(\Omega)} \Vert e_{\Bu}\Vert_{H^{1}(\Omega)} 
\left( \Vert \Bb \Vert_{W^{1,3}(\Omega)}  +  \Vert \Bb \Vert_{L^{\infty}(\Omega)} \right) 
\nn \\ 
\leq & C \left( \Vert \Bb\Vert_{W^{1, d^{+}}(\Omega)} +  \Vert \nabla \times \Bb\Vert_{W^{1,d^{+}}(\Omega)} \right) 
\Vert e_{\Bu} \Vert_{H^{1}(\Omega)} \left( \Vert \xi_{\Bb}\Vert_{H^{-1}(\Omega)} 
+ \Vert \nabla \times \xi_{\Bb}\Vert_{H^{-1}(\Omega)} \right) 
\nn \\
\leq & C_K h^{k+1} \Vert e_{\Bu}\Vert_{H^{1}(\Omega)} \, . 
\label{negative_norm_key}
\end{align}

We notice $e_{\Bb}, \Bb_{h} \in \BX_{h}$. 
By Theorem~\ref{thm_Maxwell_projection}, (\ref{Maxwell_projection_L_infinity_bound})
and the definition of $c_{1}$, the last term in (\ref{I_b}) is bounded by 
\begin{align} 
|c_{1}(\Bb_{h}; \xi_{\Bu}, e_{\Bb})| 
 & =  |c_{1}(\tilde \Bb_{h}; \xi_{\Bu}, e_{\Bb}) - 
c_{1}(e_\Bb; \xi_{\Bu}, e_{\Bb}) | 
\nn \\ 
& \le 
C ( \| \nabla \times e_{\Bb} \|_{L^2(\Omega)} \Vert \xi_{\Bu}\Vert_{L^6(\Omega)} 
+ \Vert \tilde \Bb_h \Vert_{L^{\infty}(\Omega)} \Vert \xi_{\Bu}\Vert_{L^2(\Omega)} )
\Vert e_{\Bb}\Vert_{H(\text{curl}, \Omega)}  
\nn \\ 
& 
\le \epsilon \Vert e_{\Bb}\Vert_{H(\text{curl}, \Omega)}^2 
+ C_K h^{2l+2} 
\nn
\end{align} 
where we have noted $\Vert \xi_{\Bu}\Vert_{L^6(\Omega)}  \le C \| \xi_{\Bu} \|_{H^1(\Omega)} 
\le C_K h^l \le \epsilon$ when 
$h \le h_0$ for some $h_0>0$.
Moreover, 
by Lemma~\ref{lemma_poincare_embedding}, 
Lemma~\ref{lemma_discrete_curl_embedding},  
the sum of the rest terms in the right-hand side of 
(\ref{I_b}) is bounded by 
\begin{align*}
c_{1}(e_{\Bb}; e_{\Bu}, \Bb) 
& -  c_{1}(e_{\Bb}; e_{\Bu}, \xi_{\Bb}) -    c_{1}(\xi_{\Bb}; e_{\Bu}, \xi_{\Bb})
- c_{1}(e_{\Bb}; \Bu, e_{\Bb}) 
\\
& \le  S \lambda_{1}\lambda_{2}^{*} \Vert \nabla \times \Bb \Vert_{L^2(\Omega)} 
\Vert \nabla \times e_{\Bb}\Vert_{L^2(\Omega)} 
\Vert \nabla e_{\Bu}\Vert_{L^2(\Omega)}   
\nn \\ 
& \quad + C \Vert e_{\Bb}\Vert_{H(\text{curl},\Omega)} 
\Vert e_{\Bu}\Vert_{H^{1}(\Omega)} \Vert \xi_{\Bb}\Vert_{H(\text{curl}, \Omega)} 
 \\
& \quad + C \Vert \xi_{\Bb}\Vert_{L^3(\Omega)} \Vert e_{\Bu}\Vert_{H^{1}(\Omega)} \Vert \xi_{\Bb}\Vert_{H(\text{curl}, \Omega)}
+ S \lambda_{1} \lambda_{2}^{*} \Vert \nabla \Bu\Vert_{L^2(\Omega)} \Vert \nabla \times e_{\Bb}\Vert_{L^2(\Omega)}^{2} 
 \\ 
& \le  \frac{1}{2} S^{1/2} \lambda_{1} \lambda_{2}^{*} \Vert \nabla \times \Bb \Vert_{L^{2}(\Omega)} 
\| ( e_\Bb, e_\Bu) \|^2  
+  C \Vert \xi_{\Bb}\Vert_{H(\text{curl}, \Omega)}
\| ( e_\Bb, e_\Bu) \|^2  
\nn \\
& \quad + \frac{\epsilon}{2}  \Vert e_{\Bu}\Vert_{H^{1}(\Omega)}^2 
+ C \epsilon^{-1} \Vert \xi_{\Bb}\Vert_{L^{3}(\Omega)}^{2}  \Vert \xi_{\Bb}\Vert_{H(\text{curl}, \Omega)}^{2}
+ S \lambda_{1} \lambda_{2}^{*} \Vert \nabla \Bu\Vert_{L^2(\Omega)} 
\| \nabla \times e_{\Bb} \|_{L^2(\Omega)}^2 
 \\
& \le  \left (\epsilon +C h + \frac{1}{2}S^{1/2} \lambda_{1} \lambda_{2}^{*} 
\Vert \nabla \times \Bb \Vert_{L^{2}(\Omega)}  \right  ) \| ( e_\Bb, e_\Bu) \|^2 
\nn \\ 
&  + S \lambda_{1}\lambda_{2}^{*} \Vert  \nabla \Bu\Vert_{L^2(\Omega)}
  \Vert \nabla \times e_\Bb \Vert_{L^2(\Omega)}^2 
  + C_K \epsilon^{-1} h^{2(k+1)} \, . 
\end{align*} 
The last inequality holds since $k \geq 1$. 
By combining the above inequalities, we get the estimate   
\begin{align}
\label{Ib}
I_\Bb & \leq  \left (2\epsilon +C h + \frac{1}{2}S^{1/2} \lambda_{1} \lambda_{2}^{*} 
\Vert \nabla \times \Bb \Vert_{L^{2}(\Omega)}  \right  ) \| ( e_\Bb, e_\Bu) \|^2 
 + S \lambda_{1}\lambda_{2}^{*} \Vert  \nabla \Bu\Vert_{L^2(\Omega)}
  \Vert \nabla \times e_\Bb \Vert_{L^2(\Omega)}^2 
  \nn \\ 
  & \qquad 
  +  \epsilon^{-1} C_K h^{2(k+1)}.
\end{align}

Substituting (\ref{Ia})-(\ref{Ib}) into (\ref{as+am}) gives 
\begin{align} 
a_s(e_\Bu, e_\Bu) + a_m(e_\Bb, e_\Bb)  
& \le  \left (\epsilon +C h + \frac{1}{2}S^{1/2} \lambda_{1} \lambda_{2}^{*} 
\Vert \nabla \times \Bb \Vert_{L^{2}(\Omega)}  \right  ) \| ( e_\Bb, e_\Bu) \|^2 
\nn \\ 
& \quad  +  \max \{  \lambda_{1}\lambda_{2}^{*}, \lambda_{1} \lambda_{1}^{*} \}  \Vert  \nabla \Bu\Vert_{L^2(\Omega)}
\| ( e_\Bb, e_\Bu )\|^2  + \epsilon^{-1} C_K (h^{2(k+1)} + h^{2l+2})
\nn \\ 
& \le  \left (\epsilon +C h + \widehat N_2 
 \|  (\Bu, \Bb )\| \right ) 
\| ( e_\Bb, e_\Bu )\|^2  +  \epsilon^{-1} C_K (h^{2(k+1)} + h^{2l+2})
\end{align} 
for some $\epsilon>0$.  Since 
$$ 
a_s(e_\Bu, e_\Bu) + a_m(e_\Bb, e_\Bb)  \ge \min \{ R_e^{-1}, R_m^{-1} \lambda_{0}^{*} \} 
\| (e_\Bu, e_\Bb) \|^2\, , 
$$ 
for $\epsilon$ being small enough, we get 
\begin{align} 
\| ( e_\Bb, e_\Bu) \|  \le C_K (h^{k+1} + h^{l+1})  
\label{3.2-last}
\end{align} 
when $h \le h_0$ for some $h_0>0$. 
The proof of Theorem~\ref{thm_main_result} is complete. 
\quad \endproof 


\subsection{Proof of Corollary \ref{main-coro}} 
Since $\Bu - \Bu_h = \xi_\Bu + e_\Bu$,  
the $L^2$-norm estimate in \refe{main-2} follows \refe{3.2-last} and 
the projection error estimate \refe{s-proj-error}. 
To show the $H^{-1}$-norm estimate in \refe{main-2}, 
we follow the 
approach used for Theorem \ref{thm_Maxwell_projection}. 
By \refe{dual_problem_eqs}, we have 
\begin{align*}
  (\Bb - \Bb_{h}, \Btheta)_{\Omega} 
& =  a_{m} (\Bb - \Bb_{h}, \Bz) + c_{1}(\Bb - \Bb_{h}; \Bu, z) - (\Bb - \Bb_{h}, \nabla \phi)_{\Omega} \\
&  =  a_{m} (\Bb - \Bb_{h}, \Bz - \Bz_{h}) + c_{1}(\Bb - \Bb_{h}; \Bu, \Bz - \Bz_{h}) 
- c_1(\Bb_h, \Bu-\Bu_h, \Bz_h) 
\\ 
& \quad  + (\nabla (r - r_{h}), \Bz - \Bz_{h})_{\Omega}
- (\Bb - \Bb_{h}, \nabla (\phi - \phi_{h}))_{\Omega},  \\
&  =  a_{m} (\Bb - \Bb_{h}, \Bz - \Bz_{h}) + c_{1}(\Bb - \Bb_{h}; \Bu, \Bz - \Bz_{h}) 
+ c_1(\Bb_h, \Bu-\Bu_h, z - \Bz_h) 
\\ 
& \quad  - c_1(\Bb_h, \Bu-\Bu_h,  z) + (\nabla (r - r_{h}), \Bz - \Bz_{h})_{\Omega}
- (\Bb - \Bb_{h}, \nabla (\phi - \phi_{h}))_{\Omega},
\end{align*}
for any $(\Bz_{h}, \phi_{h}) \in \BC_{h}^{k} \times S_{h}^{k}$, where we have used \refe{err_eq1_group1} with $\Bv_h = \Bz_h$. 
By Lemma~\ref{lemma_terms_ineqs}, Lemma~\ref{lemma_discrete_curl_embedding},  Lemma~\ref{thm_mfem_wellposedness} and Theorem \ref{thm_main_result} , 
we further have  
\begin{align*}
 & (\Bb - \Bb_{h}, \Btheta)_{\Omega} \\
& \le C \big( (\Vert \Bu\Vert_{L^{\infty}(\Omega)}+1) \Vert \Bb - \Bb_{h} \Vert_{H(\text{curl}, \Omega)} 
 \Vert \nabla \times (\Bz - \Bz_{h})  \Vert_{L^{2}(\Omega)}   \\ 
& \quad + h^{-1} \Vert \Bb_{h}\Vert_{H(\text{curl}, \Omega)} \Vert \Bu - \Bu_{h} \Vert_{L^2(\Omega)} 
\Vert \nabla \times (z - z_{h})\Vert_{L^{2}(\Omega)} \\
& \quad + \Vert \Bb_{h}\Vert_{L^{6}(\Omega)}\Vert \Bu - \Bu_{h} \Vert_{L^2(\Omega)} 
\Vert \nabla \times z\Vert_{H^1(\Omega)} \\
& \quad  + \Vert \nabla (r - r_{h}) \Vert_{L^{2}(\Omega)} \Vert \Bz - \Bz_{h}\Vert_{L^{2}(\Omega)}  
+ \Vert \Bb - \Bb_{h}\Vert_{L^{2}(\Omega)} \Vert \nabla (\phi - \phi_{h}) \Vert_{L^{2}(\Omega)}  \big)
\\ 
& \le C_K  (h^{k+1} + h^{l+1}) \| \Btheta \|_{H^1(\Omega)} 
\end{align*}
which in turn shows that 
\begin{align}
\label{h-1} 
 \Vert  \Bb - \Bb_{h} \Vert_{H^{-1}(\Omega)} \leq  C_K \left( h^{l+1} + h^{k+1} \right) \, . 
\end{align}
The proof is complete. 
\quad \endproof
\vskip0.1in

\section{Numerical results} 

In this section, we present two numerical examples to confirm our theoretical analysis and show the 
efficiency of methods, one with a smooth solution and one with a non-smooth solution. 
The discrete MHD system \refe{MFEM_mhd} is a system of nonlinear algebraic equations. Iterative algorithms for 
solving such a nonlinear system have been studied by several authors, e.g. s
ee \cite{DongHeZhang, GreifLi2010, Wathen2020, ZhangHeYang} 
for details. Here we use the following Newton iterative algorithm  in our computation: 
\vskip0.1in 

{\it Newton iteration: 
 
For given $(\Bu_h^{n-1}, \Bb^{n-1})$, solve the system 
\begin{align} 
a_s(\Bu_h^n, \Bv_h) 
& + c_0(\Bu_h^{n-1},\Bu_h^{n}, \Bv_h) 
+ c_0(\Bu_h^n, \Bu_h^{n-1}, \Bv_h) - c_1(\Bb_h^{n-1}, \Bv_h, \Bb_h^n) - c_1(\Bb_h^n, \Bv_h, \Bb_h^{n-1}) 
\nn \\ 
& + b_s(p^n_h, \Bv_h) - b_s(q_h, \Bu_h^n) 
\label{new-1}  \\ 
& = (\Bf, \Bv_h) + c_0(\Bu_h^{n-1}, \Bu_h^{n-1}, \Bv_h) - c_1(\Bb^{n-1}, \Bv_h,\Bb^{n-1}), 
\qquad (\Bv_h, q_h) \in \BV_h \times Q_h 
\nn \\ 
a_m( \Bb_{h}^{n}, \Bc_h) & + c_1(\Bb_h^{n-1}, \Bu_h^n, \Bc_h) + c_1(\Bb_h^{n}, \Bu_h^{n+1}, \Bc_h) 
+ b_m(r_h^n, \Bc_h) 
- b_m(s_h, \Bb_h^n) 
\nn \\ 
& = (\Bg, \Bc_h) + c_1(\Bb_h^{n-1} , \Bu_h^{n-1}, \Bc_h), \qquad (\Bc_h, q_h) \in \BC_h \times Q_h 
\label{new-2}
\end{align}  
for $n=1,2,....$, until $\| \nabla (\Bu_{h}^{n}-\Bu_{h}^{n-1}) \|_{L^2(\Omega)} \le 1.0e-10$. 
}
\vskip0.1in 
All computations are performed by using the code FreeFEM++. 
\vskip0.1in 

{\bf Example 4.1.} In the first example, we consider the MHD system \refe{mhd_eqs} 
 on a unit square $(0, 1) \times (0, 1)$ with 
the physics parameters $R_e = R_m = S = 1$. 
We let 
\begin{align} 
& \Bu = \left ( \begin{array}{cc} 
 x^2(x-1)^2 y(y-1)(2y-1) \\ 
 y^2 (y-1)^2 x(x-1)(2x-1) \end{array} \right ) ,
  \qquad 
   p = (2x-1)(2y - 1),
\nn \\ 
&
\Bb = \left ( \begin{array}{cc} 
 \sin(\pi x) \cos(\pi y) 
  \\ 
\sin(\pi y) \cos(\pi x)    
\end{array} 
\right ), \qquad \quad 
r = 0 \, . 
\nn 
\end{align}
be the exact solution of the MHD system and  
choose the source terms $\Bf, \Bg$ and boundary conditions 
correspondingly. 

\begin{figure}[ht]
  \centering
  \begin{tabular}{cc}
    \includegraphics[width = 50mm]{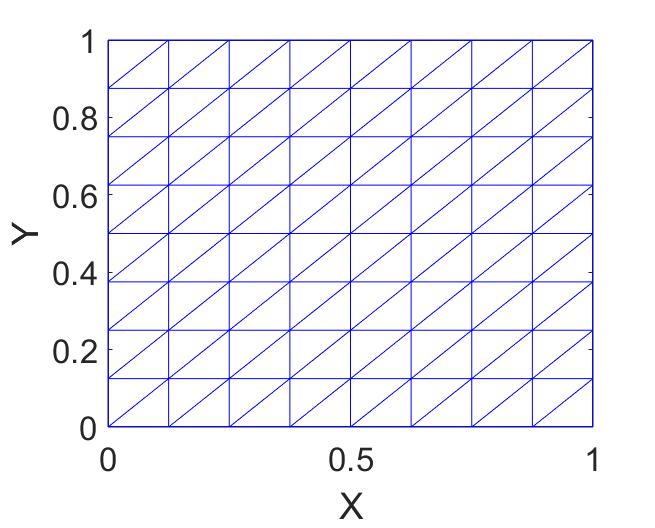}
  \end{tabular}
\vskip-0.2in 
  \caption{A uniform triangular 
  mesh on the unit square.}
  \label{mesh-2d}
\end{figure}

We solve the nonlinear FE system \refe{MFEM_mhd} by the Newton iterative algorithm 
\refe{new-1}-\refe{new-2} with Taylor-Hood/piecewise linear  ($P2-P1$) for $(\Bu, p)$ and 
the lowest-order first type of N\'ed\'elec's edge element and  the lowest-order 
second type of N\'ed\'elec's edge element, respectively, 
for the magnetic field $\Bb$. 
To show the optimal convergence rates, 
a uniform triangular partition with $M+1$ nodes in each direction is used, see Figure 1 for an illustration. 
We present in Table~\ref{table1} numerical results with the lowest-order first type of N\'ed\'elec's edge element 
for $M=4,8,16,32,64,128$. From Table~\ref{table1}, we can observe 
clearly the second-order convergence rate for the velocity $\Bu$ in $H^1$-norm and the pressure in $L^2$-norm 
and the first-order rate for the magnetic field $\Bb$ in $H(curl)$-norm. 
 This confirms  our theoretical analysis, while  in all previous analysis, only the first-order convergence rate 
for the velocity was presented. Our numerical results also show that 
 the lower order approximation to the magnetic field does not 
pollute the accuracy of numerical velocity in $H^1$-norm, although these 
two physical components are coupled strongly in the MHD system. Moreover, we present in Table~\ref{table2} 
numerical results with the lowest-order second type of N\'ed\'elec's edge element  approximation to 
the magnetic field.  The accuracy of the lowest-order second type of N\'ed\'elec's edge element approximation 
is also of the order $O(h)$ in $H(curl)$-norm. Our numerical results show the same convergence rates as numerical 
results obtained by the lowest-order first type of N\'ed\'elec's edge element.

\begin{table}[h]
  \centering
  \begin{center}
      \caption{Errors of Taylor-Hood/lowest-order N\'ed\'elec's edge element of the first type 
for MHD system  
      (Example 4.1). }
      \label{table1}
      \begin{tabular}{c||cc||cc||cc||cc|c}
        \hline 
        \hline 
        $M$  
        &  $\| \nabla (\Bu - \Bu_h) \|_{L^2}$ & Rate & $\| p-p_h \|_{L^2}$ &  Rate & $\| \Bb - \Bb_h \|_{curl} $
        & Rate & $\| r - r_h \|_{H^1}$  \\\hline      
        4  
        & 1.398e-2 &         & 2.774e-2 &         & 8.254e-1 &           & 1.232e-7   \\        
        8  
        & 2.342e-3 & 2.58 & 7.369e-2 & 1.91 & 4.274e-1 & 0.984 & 5.676e-10 \\    
        16
        &4.219e-4 & 2.47 & 1.887e-3 & 1.96 & 2.093e-1 & 0.996 & 2.349e-12 \\ 
        32 
        & 8.983e-5 & 2.23 & 4.750e-4 & 1.99 & 1.047e-1 & 0.999 & 3.732e-13 \\ 
        64
        & 2.130e-5 & 2.08 & 1.190e-4 & 2.00 & 5.237e-2 & 1.00   & 1.553e-12 \\ 
      128
      & 5.250e-6 & 2.02 & 2.976e-5 & 2.00 & 2.168e-2 & 1.00   &  6.273e-12 \\   \hline    
\hline 
    \end{tabular}
  \end{center}
\end{table}

\begin{table}[h]
  \centering
  \begin{center}
      \caption{Errors of Taylor-Hood/lowest-order N\'ed\'elec's edge element of the second type  for MHD system  
      (Example 4.1). }
      \label{table2}
      \begin{tabular}{c||cc||cc||cc||c}
        \hline 
        \hline 
        $M$  &   $\| \nabla (\Bu - \Bu_h)\|_{L^2}$ & Rate & $\| p-p_h \|_{L^2}$ &  Rate & $\| \Bb - \Bb_h \|_{curl} $
        & Rate & $\| r - r_h \|_{H^1}$  \\\hline      
        4  & 1.137e-2 &      & 3.943e-2 &     & 8.093e-1 &   &     2.007e-4   \\        
        8  & 1.829e-3 & 2.63 & 1.041e-2 & 1.92 & 4.095e-1 & 0.982 & 7.128e-6 \\    
        16& 3.669e-4 & 2.32 & 2.640e-3 & 1.98 & 2.054e-1 & 0.996 & 2.331e-7 \\ 
        32 & 8.484e-5 & 2.03 & 6.624e-4 & 1.99 & 1.028e-1 & 0.999 & 7.415e-9 \\ 
        64& 2.075e-5 & 2.03 & 1.658e-4 & 2.00 & 5.140e-2 & 1.00   & 2.334e-10 \\  \hline    
\hline 
    \end{tabular}
  \end{center}
\end{table}

{\bf Example 4.2. } 
The second example is to study numerical solution of  the MHD system 
 on a non-convex $L$-shape domain $\Omega: = (-1,1) \times (-1,1) / 
 (0,1] \times [-1,0)$. The solution of the system may have certain singularity 
 near the re-entrant corner and the regularity of the solution depends upon 
 the interior angles in general. 
  Here we investigate the convergence rates 
 of the method for the problem with a nonsmooth  solution. 
 We set $R_e = R_m = 0.1, S = 1$, and choose the source terms and the boundary conditions such that  the singular solutions are defined by  
\begin{align} 
& \Bu = \left ( \begin{array}{cc} 
 \rho^{\lambda} \left ( 
 (1+\lambda) \sin \theta \phi(\theta) + \cos \theta \phi'(\theta) \right ) \\ 
\rho^{\lambda} \left ( 
- (1+\lambda) \cos \theta \phi(\theta) + \sin \theta \phi'(\theta)  
\right ) 
 \end{array} \right ) , 
  \qquad 
   p = \frac{\rho^{\lambda-1}}{1-\lambda} \left ( 
   (1+\lambda)^2 \phi'(\theta) + \phi'''(\theta) \right ) 
\nn \\ 
&
\Bb = \nabla \left (  \rho^{\frac{2}{3}} \sin \left ( \frac{2\theta}{3} \right ) 
\right ), \qquad \quad 
r = 0 
\nn 
\end{align} 
in the polar coordinate system $(\rho, \theta)$, where
\begin{align} 
\phi(\theta) = 
\sin((1+\lambda) \theta) \frac{\cos(\lambda \omega)}{1+\lambda} - 
\cos((1+\lambda)\theta) - \sin((1-\lambda)\theta) \frac{\cos(\lambda \omega)}{1-\lambda} + \cos((1-\lambda)\theta) 
\nn 
\end{align} 
and the parameters 
$\lambda = 0.54448$ and $\omega=2/3$. 
Clearly $(\Bu, p) \in H^{\lambda+1-\epsilon_0}(\Omega) \times H^{\lambda-\epsilon_0}(\Omega)$ and $\Bb \in 
\BH^{2/3-\epsilon_0}(\Omega)$ for any $\epsilon_0>0$. 
This is a benchmark 
problem in numerical simulations, which was tested  by many authors, 
$e.g.$, see \cite{Badia2013,GreifLi2010, ZhangHeYang}. 

The accuracy of numerical methods usually depends upon the regularity of the exact solution, 
while theoretical analysis given in this paper is based on the assumption of  high regularity. 
Here we use the same method as described in Table~\ref{table1}. 
For the solution of the weak regularity as mentioned above, the 
interpolation error orders on quasi-uniform meshes are 
\begin{align} 
& \| \nabla ( \Bu - \Bu_h ) \|_{L^2(\Omega)} = O(h^{\lambda-\epsilon_0})
\nn \\ 
& \| \Bb - \Bb_h \|_{H(curl, \Omega)} = O(h^{2/3-\epsilon_0}) \, .
\nn 
\end{align} 
To test the convergence rate, a uniform triangulation is made on the 
$L$-shape domain $\Omega$, see Figure 2 (top left) for a sample mesh, where 
$M+1$ nodal points locate in the interval $[0, 1]$.  
We present in Table~\ref{table3} numerical results obtained by the method with  uniform meshes. 
One can see clearly that the orders of numerical approximations for $\Bu$ in $H^1$-norm and for 
$\Bb$ in $H(curl)$-norm are $0.57$ and $0.63$,  respectively, which are very close to the optimal ones in the sense of interpolation. 
It has been noted that a
local refinement may further improve the convergence rate. Here we test 
the method with locally refined meshes, although our analysis was given only for a quasi-uniform mesh. We present three non-uniform meshes in Figure 2 with a finer mesh distribution around the re-entrant corner.  We present in Table 4 numerical results obtained by the method with these four types of meshes in Figure 2.  
From Table~\ref{table4}, we can see the second-order convergence rate 
for the numerical velocity and the first-order convergence rate for the magnetic field approximately. This shows again that the accuracy of the numerical method can be  improved dramatically by using such locally refined meshes.

\begin{figure}[ht]
\vspace{0.1in}
\centering
\begin{tabular}{cc}
\epsfig{file=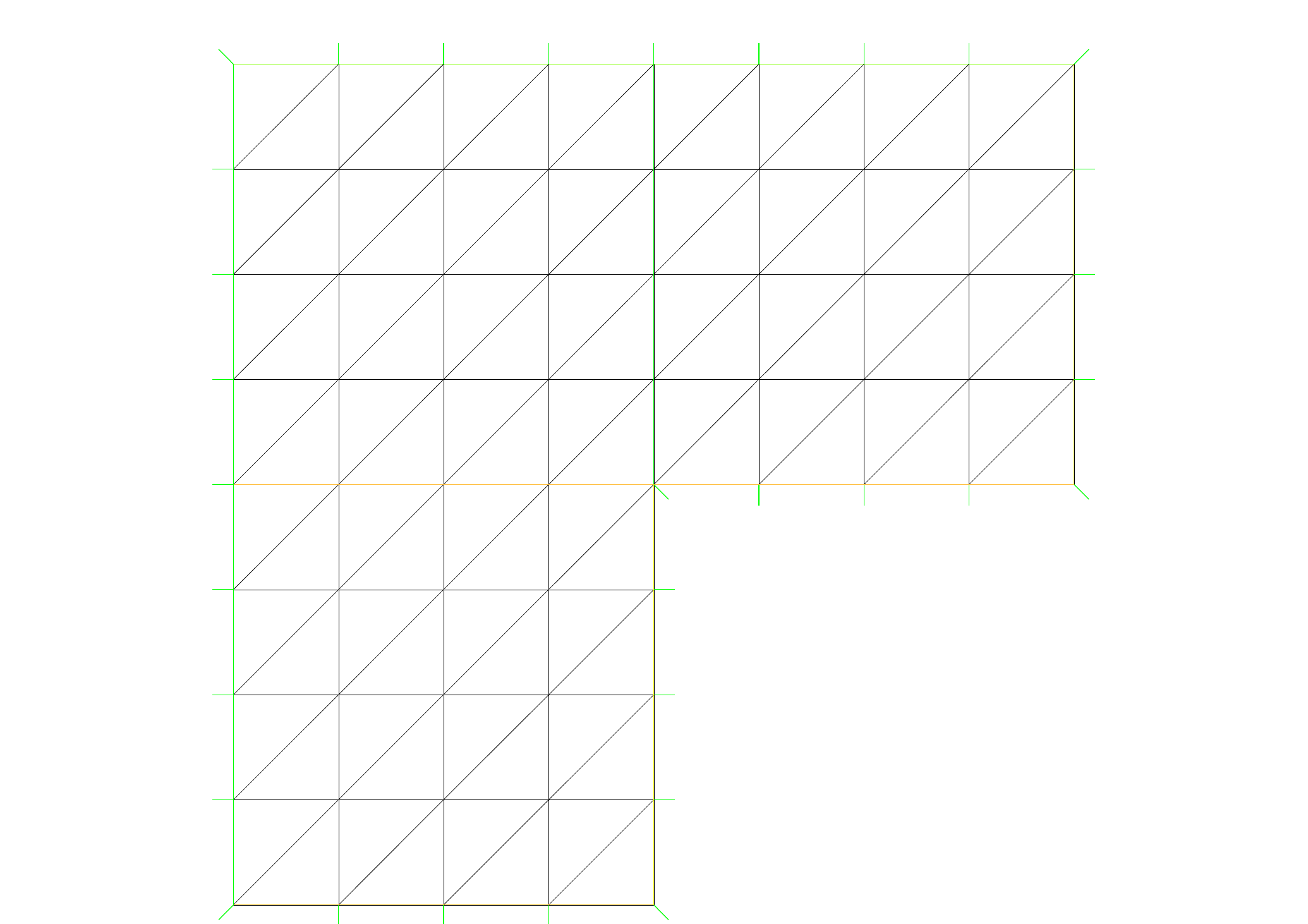,height=1.8in,width=2.5in}
\hspace{0.3in}
&\epsfig{file=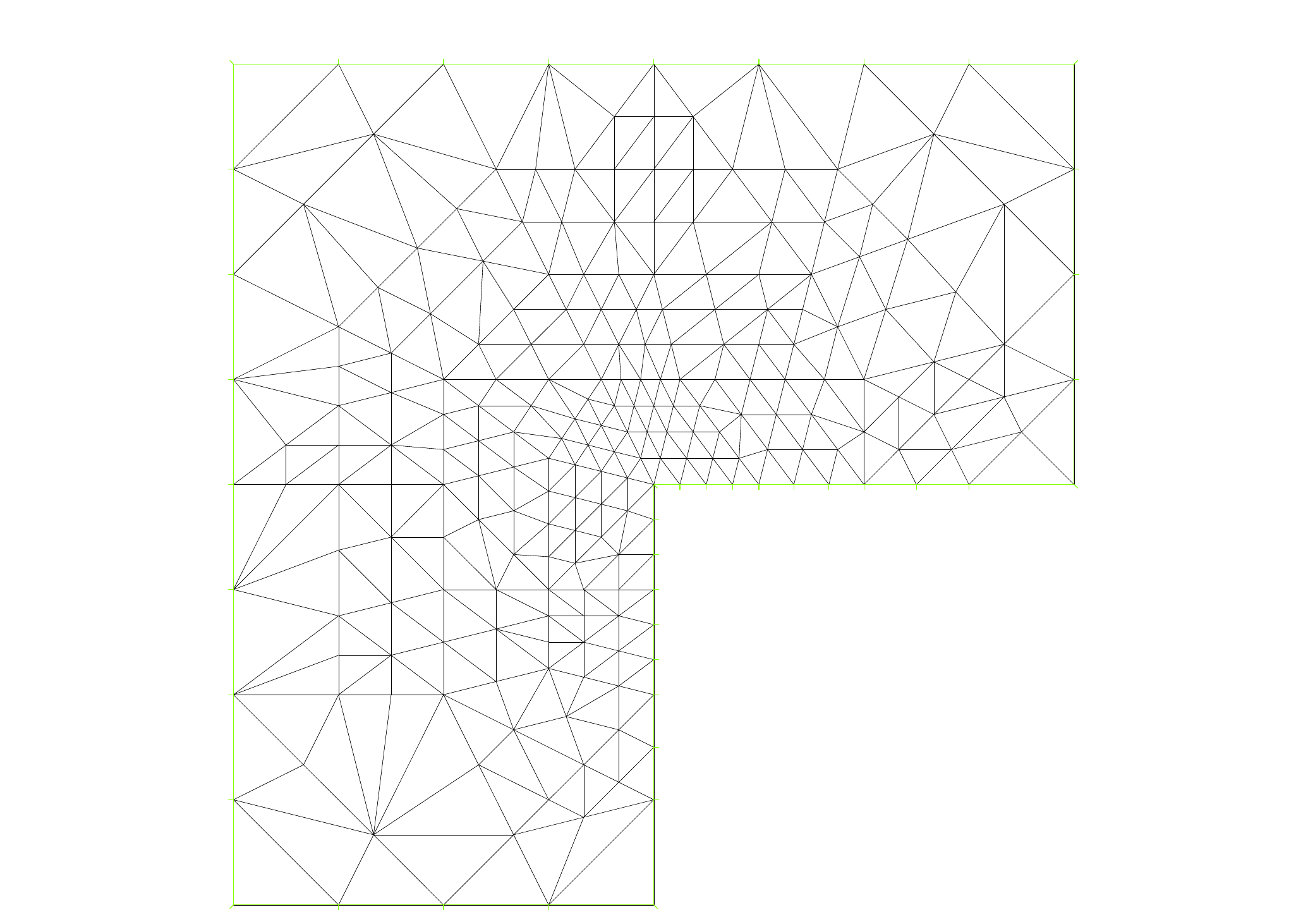,height=1.8in,width=2.5in}
\\ \vspace{0.5in}
\epsfig{file=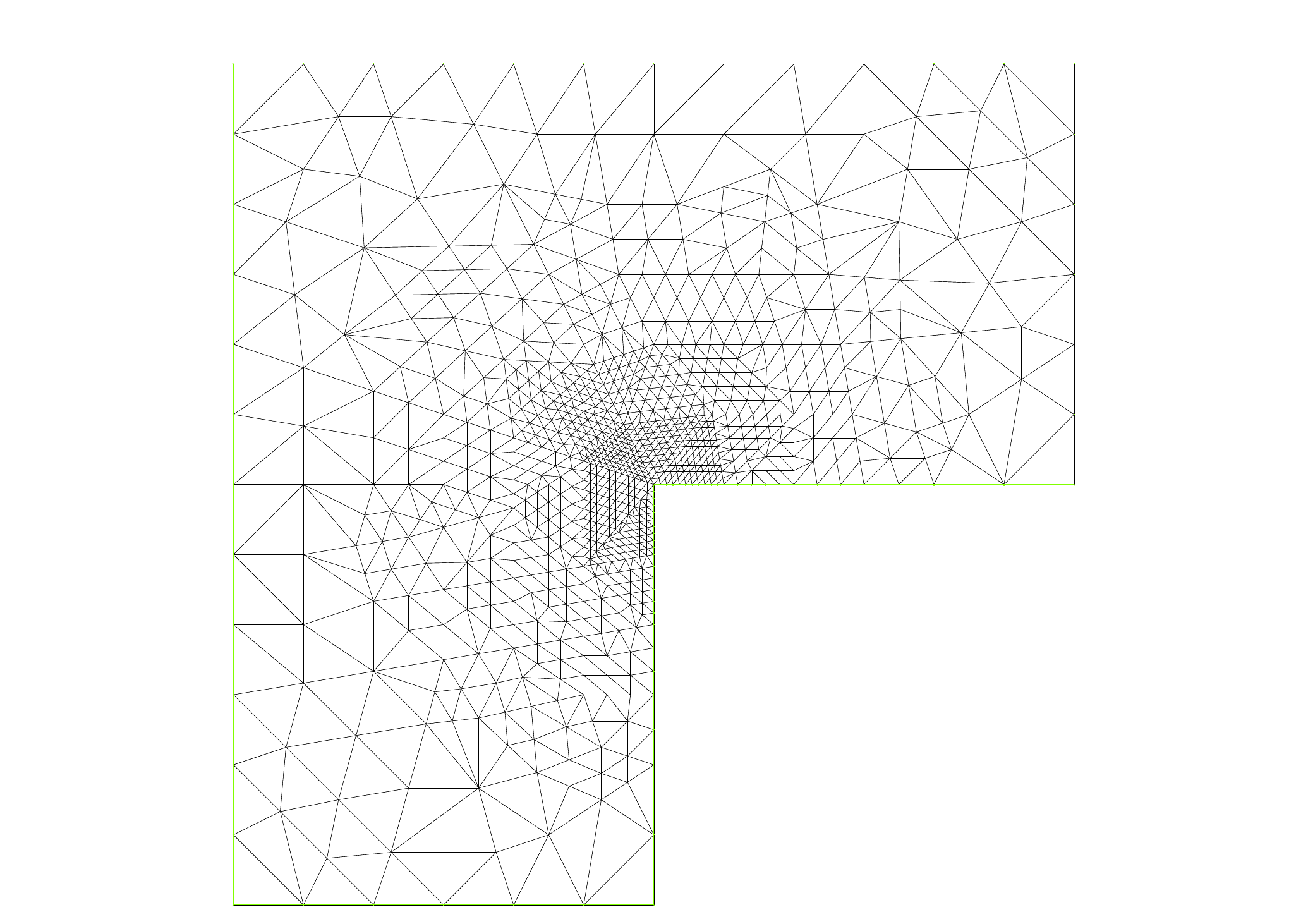,height=1.8in,width=2.5in}
\hspace{0.3in}
&\epsfig{file=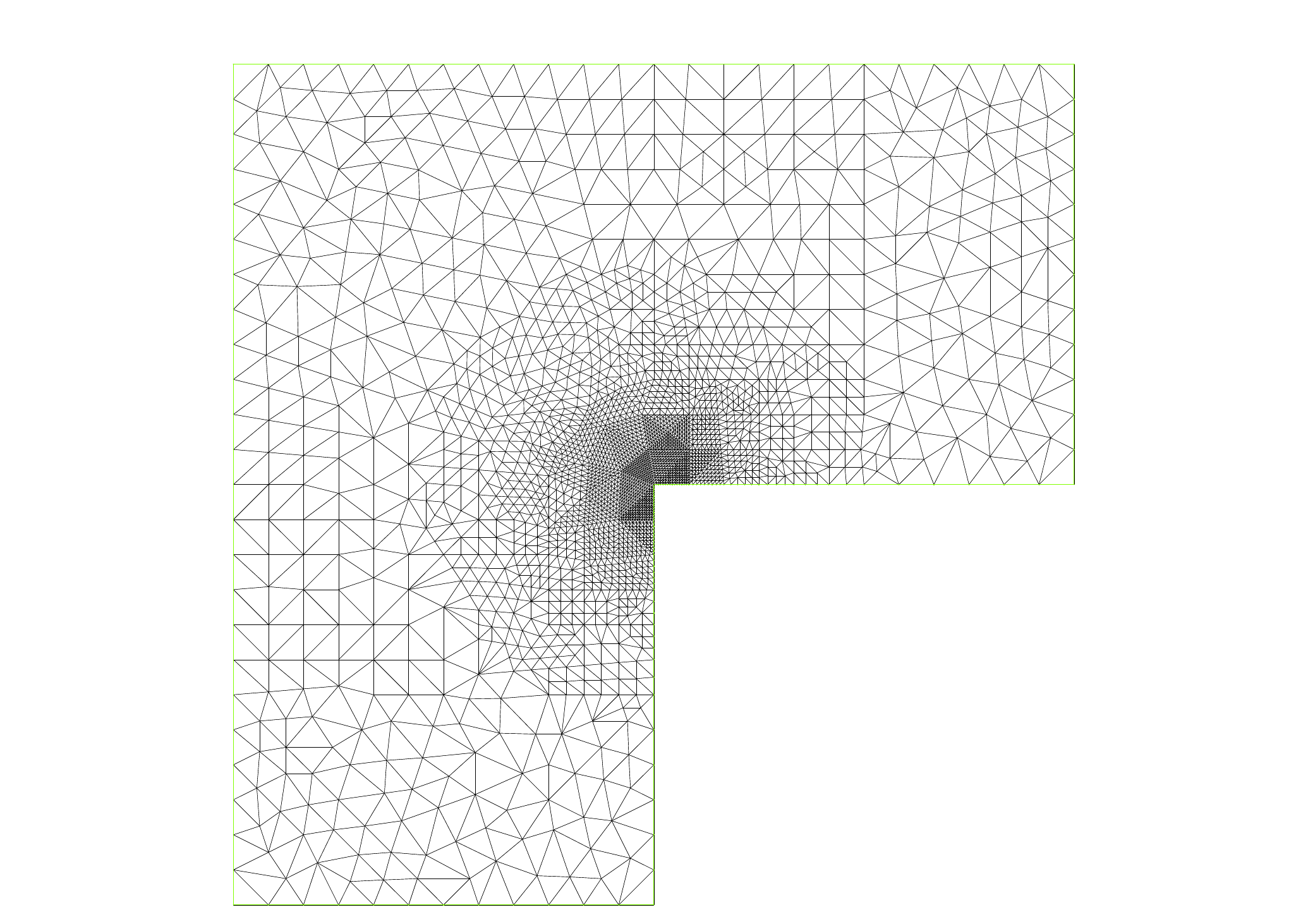,height=1.8in,width=2.5in}
\end{tabular}
\vskip-0.4in
\caption{ Top Left: the first mesh with 65 nodes,  
Top Right: the second mesh with $236$ nodes, 
Bottom Left: the third mesh with $872$ nodes, 
Bottom Right: the fourth  mesh with $2550$ nodes.}
\label{mesh2d_Lshape}
\end{figure}

\begin{table}[h]
  \centering
  \begin{center}
      \caption{Errors of Taylor-Hood/lowest-order Nedelec FEM of the first type 
        for MHD system with the nonsmooth solution in an $L$-shape domain and uniform meshes
      (Example 4.2). }
      \label{table3}
      \begin{tabular}{c||cc||cc||cc||c}
        \hline 
        \hline 
        $M$  & $\| \nabla ( \Bu - \Bu_h)\|_{L^2}$ & Rate & $\| p-p_h \|_{L^2}$ &  Rate & $\| \Bb - \Bb_h \|_{curl} $
        & Rate & $\| r - r_h \|_{H^1}$  \\\hline      
        4  & 1.1281 &              & 4.775 &                & 2.933e-1 &           &  1.470e-3   \\        
        8  & 7.284e-1 & 0.815 & 2.273 & 1.07        & 1.742e-1 & 0.751 & 2.464e-3 \\    
        16& 4.626e-1 & 0.655 & 1.256 & 0.856      & 1.117e-1 & 0.640 & 1.786e-3 \\ 
        32 & 3.216e-1 & 0.525 & 8.141e-1 & 0.814 & 7.279e-2 & 0.618 & 1.052e-3 \\ 
        64& 2.162e-1 & 0.573 & 5.341e-1 & 0.534 & 4.703e-2 & 0.630 & 4.574e-4 \\  \hline    
\hline 
    \end{tabular}
  \end{center}
\end{table}

\begin{table}[h]
  \centering
  \begin{center}
      \caption{Errors of Taylor-Hood/lowest-order Nedelec FEM of the first type
        for MHD system with the nonsmooth solution in an $L$-shape domain and adaptive meshes 
      (Example 4.2). }
      \label{table4}
      \begin{tabular}{c||cc||cc||cc||c}
        \hline 
        \hline 
        Mesh  & $\| \nabla ( \Bu - \Bu_h) \|_{L^2}$ & Rate & $\| p-p_h \|_{L^2}$ &  Rate & $\| \Bb - \Bb_h \|_{curl} $
        & Rate & $\| r - r_h \|_{H^1}$  \\\hline      
        Mesh I    & 1.280e-1 &         & 4.667e-1 &           & 2.912e-1 &         &  1.480e-4   \\        
        Mesh II  & 6.415e-2 & 1.07 & 1.960e-1 & 1.34   & 1.591e-1 & 0.94 & 3.951e-4 \\    
        Mesh III  & 2.017e-2 & 1.77 & 6.236e-2 & 1.75   & 5.562e-2 & 1.60 & 2.771e-5 \\ 
        Mesh IV & 7.521e-3 & 1.85 & 2.243e-2 &  1.91  & 2.715e-2 & 1.33 & 2.668e-5 \\  \hline    
\hline 
    \end{tabular}
  \end{center}
\end{table}

\noindent{\bf Acknowledgments}~The author would like to thank the anonymous referees
for the careful review and valuable suggestions and comments, which have greatly  
improved this article.

\end{document}